\tikzset{
font=\small, 
point/.style={fill, circle, inner sep=1.2pt}, 
>={Straight Barb[round,angle=60:1.2mm 1]} 
}
\newcommand{\supp}{\mbox{supp} \, }
\title{Axisymmetric flows with swirl for Euler \\
and Navier-Stokes equations}
\author{
Theodoros Katsaounis
\thanks{Department of Mathematics and Applied Mathematics, University of Crete, Heraklion, 70013 Crete, Greece.
Email: {\tt thodoros.katsaounis@uoc.gr}}
\thanks{Institute of Applied and Computational Mathematics,
FORTH, Heraklion, Greece.}
\and
Ioanna Mousikou
\thanks{Computer, Electrical, Mathematical Sciences \& Engineering Division, King Abdullah University of Science and Technology (KAUST), Thuwal, Saudi Arabia.
Email: {\tt ioanna.mousikou@kaust.edu.sa} }
\and 
Athanasios E. Tzavaras
\thanks{Computer, Electrical, Mathematical Sciences \& Engineering Division, King Abdullah University of Science and Technology (KAUST), Thuwal, Saudi Arabia.
Email: {\tt athanasios.tzavaras@kaust.edu.sa} }
\date{}
}
\renewcommand{\footnote}{\endnote}
\def\charf {\mbox{{\text 1}\kern-.24em {\text l}}}
\newcommand{\ignore}[1]{}
\newtheorem{lemma}{Lemma}[section]
\newtheorem{theorem}[lemma]{Theorem}
\newtheorem{proposition}[lemma]{Proposition}
\newtheorem{corollary}[lemma]{Corollary}
\newtheorem{definition}[lemma]{Definition}
\def\torus{{{\text{\rm T}} \kern-.42em {\text{\rm T}}}}
\def\T3{\torus^3}
\begin{document}
\maketitle
\baselineskip=12pt

\numberwithin{equation}{section}

\begin{abstract}
\noindent
We consider the incompressible axisymmetric Navier-Stokes equations with swirl as an idealized model for tornado-like flows. Assuming an infinite vortex line 
which interacts with a boundary surface resembles the tornado core, we look for stationary self-similar solutions of the axisymmetric Euler and axisymmetric Navier-Stokes equations. We are particularly interested in the connection of the two problems in the zero-viscosity limit.  First, we construct a class of explicit stationary self-similar solutions for the axisymmetric Euler equations. Second, we consider the possibility of discontinuous solutions and prove that there do not exist self-similar stationary Euler solutions with slip discontinuity. This nonexistence result is extended to a class of flows where there is mass input or mass loss through the vortex core. Third, we consider solutions of the Euler equations as zero-viscosity limits of solutions to Navier-Stokes. Using techniques from the theory of Riemann problems for conservation laws, we prove that, under certain assumptions, stationary self-similar solutions of the axisymmetric Navier-Stokes equations converge to stationary self-similar solutions of the axisymmetric Euler equations as $\nu\to0$. This allows to characterize the type of Euler solutions that arise via viscosity limits. 
\end{abstract}
%
%
%
\tableofcontents



\section{Introduction}
\label{sec:intro}
Tornadoes are among the most extreme and destructive weather phenomena and,  due to the numerous casualties and substantial damage in property,
 their modeling has attracted considerable interest.  At the core of mathematical models for the description of tornadoes is the concept of swirling flows. 
 Assuming the tornado structure does not change significantly in time and that a vortex line resembles the tornado core, \cite{Mor66}, a common model for tornado-like flows  is the stationary incompressible axisymmetric Navier-Stokes equations. Namely, the following system in cylindrical coordinates is considered
\begin{subequations}
\label{axi-sys}
\begin{align}
u \frac{\partial u}{\partial r} + w \frac{\partial u}{\partial z} - \frac{v^2}{r} & = \nu \Big[\frac{1}{r}\frac{\partial }{\partial r} \Big(r \frac{\partial u}{\partial r}\Big)+ \frac{\partial^2 u}{\partial z^2} -  \frac{u}{r^2}  \Big] - \frac{\partial p}{\partial r}, \\
u \frac{\partial v}{\partial r} + w \frac{\partial v}{\partial z} + \frac{uv}{r} & = \nu \Big[\frac{1}{r}\frac{\partial }{\partial r} \Big(r \frac{\partial v}{\partial r}\Big)+ \frac{\partial^2 v}{\partial z^2} -  \frac{v}{r^2}  \Big], \\
u \frac{\partial w}{\partial r} + w \frac{\partial w}{\partial z} \qquad  & = \nu \Big[\frac{1}{r}\frac{\partial }{\partial r} \Big(r \frac{\partial w}{\partial r}\Big)+ \frac{\partial^2 w}{\partial z^2}  \qquad \Big] - \frac{\partial p}{\partial z}, \\
\frac{1}{r} \frac{\partial }{\partial r} (ru) + \frac{\partial w}{\partial z}  & = 0,
\end{align}
\end{subequations}
where $\vec{u}=(u,v,w): \mathbb{R}^3\times \mathbb{R}_+ \to \mathbb{R}^3$ is the velocity, $p : \mathbb{R}^3 \times \mathbb{R}_+\to \mathbb{R}$ is the pressure and $\nu \geq 0$ is the coefficient of kinematic viscosity. 
Modeling tornadoes necessitates the consideration of certain additional factors, like rotation induced from the cloud and buoyancy effects.
The reader is referred to App. \ref{sec:tornado} for a quick presentation of models and  references.
Nevertheless, system \eqref{axi-sys} is considered as the base-model for describing their core structure, \cite{Rot13}.

Long \cite{Long58,Long61} introduced a self-similar ansatz and reduced the stationary axisymmetric Navier-Stokes equations to a system of ordinary differential equations focusing on the  boundary layer description.  Independently, Goldshtik  \cite{Gold60}
examined the existence of self-similar solutions as the Reynolds number varies; he concluded his system 
 is not solvable for all Reynolds numbers and characterized this loss of existence as a 'paradox'. Few years later, Serrin \cite{Serrin} presented
 another self-similar ansatz and showed that only three types of solution can be associated with the interaction of an infinite vortex line with a boundary plane:
 the flow can be either ascending, or descending, or a combination of these two profiles, i.e. downward near the vortex line and inward near the r-axis.
 The latter is usually referred as a double-celled vortex and Serrin \cite{Serrin} presented extensive comparisons of such solutions to tornadoes.
 Many authors subsequently studied the system \eqref{axi-sys} on occasion considering more general families of self-similar solutions, for example \cite{FFFMB95,BDSS2014,GS90}, or more general geometries including for example conical flows,  \cite{GS90,Gold90,Sozou92,SWK94,FFA00,Shtern12}.

This work has two objectives: First, to consider the full system \eqref{axi-sys} and provide a complete study of the stationary self-similar solutions of axisymmetric Navier-Stokes and of axisymmetric Euler equations. Second, to compare those equations and study the emergence of Euler solutions
from the Navier-Stokes solutions  as the viscosity $\nu$ goes to zero. One novel feature
for the Euler equations is the consideration of solutions with a slip discontinuity in the velocities. 
Regarding the Navier-Stokes equations, we obtain an alternative
derivation of the equations studied by Serrin \cite{Serrin} and  study the emergence of stationary self-similar solutions 
of the Euler equations as zero-viscosity limits from corresponding solutions of the Navier-Stokes equations. Our analysis indicates
that first there are no solutions with slip-discontinuity for the stationary, self-similar Euler equations, and second that the solutions of Navier-Stokes
approach the single cell solution of the Euler equations in the zero-viscosity limit. Numerical calculations indicate
the presence of double-celled solutions but this happens for finite (even relatively large viscosities) and does not persist as the
viscosity decreases.

We now provide an outline of the work. We introduce the self-similar transformation
\begin{align*}
u(r,z) = \frac{1}{r} U(\xi), \quad v(r,z) = \frac{1}{r} V(\xi), \quad  w(r,z) =\frac{1}{r}  W(\xi),\quad  p(r,z) =\frac{1}{r^2}  P(\xi) \, ,
\end{align*}
in $\xi = \frac{z}{r}$ to  \eqref{axi-sys} which captures the scale-invariance of Navier-Stokes equations. This leads, for the axisymmetric Euler equations, to
a system of ordinary differential equations:
\begin{subequations}
\label{Eul1}
\begin{align}
\bigg[\frac{\theta^2}{2} + (1+\xi^2)P \bigg]' &= -\xi V^2, \\
V' \theta &= 0, \\
\bigg[\theta^2 - \xi \Big(\frac{\theta^2}{2}\Big)' + P \bigg]' &= 0,
\end{align}
\end{subequations}
 for $(\theta, V, P)$, where $\theta$ is a stream-function with $U = - \theta^\prime$ and $W = \theta - \xi \theta^\prime$. The same transformation leads
for the axisymmetric Navier-Stokes equations to the system
\begin{subequations}
\label{NS1}
\begin{align}
\bigg[\frac{\theta^2}{2} + (1+\xi^2)P \bigg]' &=  \nu \bigg[\xi\theta - (1+\xi^2) \theta' \bigg]' -\xi V^2, \\
V' \theta &= \nu \Big[3\xi  V' + (1+\xi^2) V''\Big], \\
\bigg[\theta^2 - \xi \Big(\frac{\theta^2}{2}\Big)' + P \bigg]' &= \nu \Big[\xi\theta - \xi^2 \theta' - \xi (1+\xi^2) \theta'' \Big]' \, , 
\end{align}
\end{subequations}
augmented with the physically relevant boundary conditions (see section \ref{sec:axi} for the derivations).

First, we consider  \eqref{Eul1}. Imposing no-penetration conditions on the boundary and the vortex core we derive explicit solutions that correspond to either ascending or descending flows. To examine whether a double-celled vortex may occur for the Euler equations, we consider the existence of solutions with slip discontinuity in velocity.
These would emerge from the Rankine-Hugoniot conditions for \eqref{Eul1},
\begin{equation}\label{intro:RH}
\llbracket P  \rrbracket  = 0, \qquad 
\llbracket \theta V  \rrbracket = 0, \qquad 
\llbracket \theta \, \theta'  \rrbracket = 0,   \qquad 
\end{equation}
where $\llbracket \cdot \rrbracket$ is the jump operator. We prove that there do not exist solutions of the self-similar Euler equations with discontinuities at a finite number of points that satisfy the jump conditions \eqref{intro:RH}. This nonexistence result is then extended to a class of flows where there is mass input or loss through the vortex line. For the aforementioned flows, the explicit continuous solutions are also derived.
The non-existence result leads to conjecture that the double-celled vortices cannot persist in the limit as the viscosity goes to zero.

Next, we focus on the stationary self-similar axisymmetric Navier-Stokes  system \eqref{NS1} in the context of a vortex core interacting with a boundary. 
This problem is reduced to a coupled integro-differential system 
for $(\theta(\xi), V(\xi)) $
\begin{subequations}
\label{NS2}
\begin{align}
\frac{\theta^2}{2} - \nu \bigg[(1+\xi^2) \theta' + \xi \theta \bigg]  &= \mathcal{G}\left(\xi\right), \\
\nu (1+\xi^2) V'' + \Big(3\nu\xi - \theta\Big) V' &= 0,
\end{align}
\end{subequations}
with boundary conditions
\begin{equation}
\theta = \theta' = V = 0  \,\, \textrm{ at } \;  \xi = 0,  \qquad 
V \to V_\infty , \,\, U \to 0 \, , \, \,  \textrm{ as} \; \xi \to \infty.
\end{equation}
where the functional $\mathcal{G}$ depending on $V(\cdot)$ and $\xi$ is defined via
\begin{equation}\label{intro-defG}
\mathcal{G}\left(\xi\right) = \mathcal{G}\left(V(\cdot), \xi\right) =  \xi \sqrt{1+\xi^2} \int_\xi^\infty \frac{1}{\zeta^2(1+\zeta^2)^\frac{3}{2}} \bigg( \int_0^\zeta s V^2(s) ds\bigg) d\zeta + \,\, {E_0} \phi(\xi).
\end{equation}
The system is recast into two equivalent integrodifferential formulations and provides a common framework encompassing the pioneering
works   on this problem of Goldshitk \cite{Gold60},  Serrin \cite{Serrin}  and Goldstick-Shtern \cite{GS90}.  Existence of solutions for \eqref{NS2} is provided under the conditions described in \cite{Serrin}. Moreover the system \eqref{NS2} is solved numerically using an iterative solver and a numerical bifurcation diagram is presented, see Figure \ref{bfd1}. 

We then focus on the zero-viscosity limit from Navier-Stokes to  Euler in the self-similar stationary setting. This leads to study the 
zero-viscosity limit $\nu \to 0$ for the integrodifferential system \eqref{NS2}. The problem is conveniently recasted as
an autonomous system,
\begin{equation}
\label{intro-ns}
\begin{aligned}
\nu \, \frac{d \Theta_\nu}{dx}  &=  \tfrac{1}{2} \Theta_\nu^2  - \mathcal{F}\left(V_\nu ; x\right), 
\\
\nu \frac{d^2 {V_\nu}}{dx^2} &= \Theta_\nu  \, \frac{d {V_\nu}}{dx} ,
\end{aligned}
\end{equation}
where $\mathcal{F}\left(V_\nu ; x\right)$ is the analog of \eqref{intro-defG} (and is defined in \eqref{defF} see section \ref{section:equiv-form}).
An analysis of the possible configurations of solutions of \eqref{intro-ns} (based on ideas of Goldstick \cite{Gold60}) leads to
a-priori estimates for solutions and classifying  the potential shapes of the stream function to three configurations.
To investigate  the zero-viscosity limit,  we employ compactness methods and exploit techniques developed in the theory of zero-viscosity limits 
for Riemann problems of conservation laws, \cite{Tz96,Pap99}. The derivative $dV$ is viewed as a limit of a family
of probability measures capturing the averaging process as $\nu\to 0$. 
We pursue two theories: the first is based on $L^p$ estimates for the stream function and variation estimates for the velocity.
It uses weak convergence methods and leads to the convergence Theorem \ref{maintheorem}. The latter is not fully satisfactory 
due to the weak bounds available for the stream function.
In a second step, under more restrictive conditions on the data for the problem, we obtain uniform variation estimates and
invoke Helly's theorem to conclude almost everywhere convergence
(along subsequences) from the Navier-Stokes solutions to the Euler solutions.
The convergence results are stated in Theorem \ref{secondtheorem}, Proposition \ref{prop:compactness}, and Corollary \ref{final}.
They permit to characterize admissibility restrictions for the stationary Euler solutions emerging from Navier-Stokes in this setting.

Finally, we study the boundary layer in the case of a model problem for \eqref{intro-ns} that captures the essential behavior of our system.  We use 
methods of asymptotic analysis, namely inner and outer solutions and matched asymptotic expansions. 
We deduce that the boundary layer is of order $\nu^{2/3}$ and provide an
asymptotic description of the stream function  $\Theta_\nu$ (see \eqref{asymd-theta}) and of $V_\nu$  (see \eqref{V-asympt}).

The work is organized as follows: 
In Section \ref{sec:axi}, we  introduce the self-similar ansatz and derive the systems \eqref{Eul1} and \eqref{NS1} that are used in the rest of this work. 
Sections \ref{sec:euler} and \ref{sec:disceuler} are devoted to stationary self-similar axisymmetric Euler equations. In section \ref{sec:euler}, an explicit continuous solution of \eqref{Eul1} is derived, while in section \ref{sec:disceuler} we investigate the existence of discontinuous solutions with finite number of discontinuities.
 In  section \ref{sec:disceuler}, Euler solutions are also studied for flows 
with input of mass through the vortex line. In Section \ref{sec:NS}, we study self-similar stationary solutions for axisymmetric Navier-Stokes.
After presenting some of their properties, 
we examine their limiting behavior as $\nu\to0$ using compactness methods and analytical ideas from the theory of Borel measures. 
The results are stated in Theorem \ref{maintheorem}, Theorem \ref{secondtheorem} and Corollary \ref{final}.
In Section \ref{bound-layer},
there is an asymptotic analysis of the boundary layer for a model problem, where inner and outer expansions leading to an explicit form of an asymptotic solution.
In Section \ref{sec:numerics}, a numerical scheme is presented for solving the stationary, self-similar system \eqref{intro-ns}.
We present indicative profiles of typical flows that appear for various values of the parameters, and calculate a 
bifurcation of solutions in terms of the dimensionless parameters $(\frac{E_0}{V_\infty} ,  \frac{\nu}{V_\infty})$. 
The diagram is computed by solving the  system \eqref{intro-ns} numerically and characterizing solutions according  to the zone they belong, see Sections \ref{sec:existence}
and \ref{sec:bifurcation}. 

In Appendix \ref{sec:cyl-coord} we list Navier-Stokes in cylindrical coordinates. As our study is motivated from the study of tornadoes, we present  for the convenience of the reader  in Appendix \ref{sec:tornado} a quick review of models used in the literature for tornado modeling.

%
\section{Preliminaries}
\label{sec:axi}

We consider the equations of motion for an incompressible homogeneous viscous fluid, with constant density $\rho=1$ :
\begin{subequations}
\label{NS}
\begin{align}
\vec{u}_t + (\vec{u} \cdot \nabla) \vec{u} &= - \nabla p + \nu \, \Delta \vec{u} , \\
\nabla\cdot \vec{u} & = 0,
\end{align}
\end{subequations}
where $\vec{u} : \mathbb{R}^3\times \mathbb{R}_+ \to \mathbb{R}^3$ is the velocity vector of the fluid, $p : \mathbb{R}^3 \times \mathbb{R}_+\to \mathbb{R}$ is the pressure and $\nu \geq 0$ is the coefficient of kinematic viscosity. The first equation represents the conservation of momentum 
while the second is the incompressibility condition and may be interpreted as describing conservation of mass. For $\nu>0$, the system \eqref{NS} is the so-called Navier-Stokes equations, while when viscocity effects are omitted the corresponding system \eqref{NS} with $\nu =0$ is the (incompressible) Euler equations. 

\subsection{Axisymmetric Navier - Stokes equations}
In three space dimensions, introducing cylindrical coordinates $(r,\vartheta,z)$:  $x_1 = r \,\cos\vartheta$, $x_2 = r\,\sin\vartheta$, $x_3 = z$, 
and the attached unit vector system 
\begin{equation*}
\vec{e}_r = (\cos\vartheta,\sin\vartheta,0), \,\  \vec{e}_\vartheta = (-\sin\vartheta,\cos\vartheta,0), \,\ \vec{e}_z = (0, 0, 1) \, , 
\end{equation*}
we express the velocity vector $\vec{u}$ in cylindrical coordinates as
\begin{equation*}
\vec{u} = u(r,\vartheta,z,t) \vec{e}_r + v(r,\vartheta,z,t) \vec{e}_{\vartheta} + w(r,\vartheta,z,t) \vec{e}_z.
\end{equation*}
A flow is called axisymmetric if the velocity  $\vec{u}$ does not depend on the azimuthal angle $\vartheta$, i.e.
\vspace{-5pt}
\begin{equation*}
\vec{u} = u(r,z,t) \vec{e}_r + v(r,z,t) \vec{e}_{\vartheta} + w(r,z,t) \vec{e}_z.
\vspace{-5pt}
\end{equation*}
The velocity component $v$ in the direction of $\vec{e}_\vartheta$ is called the swirl velocity. If the swirl is everywhere equal to zero, i.e. $v\equiv 0$, we name such flows as flows without swirl. Otherwise we call them axisymmetric flows with swirl.

Using that the velocity $\vec{u}$ does not depend on $\vartheta$, we derive the axisymmetric Navier-Stokes equations which have the following form
(see App \ref{sec:cyl-coord})
\begin{subequations}
\begin{align}
\label{tNS1}
\frac{\partial u}{\partial t} + u \frac{\partial u}{\partial r} + w \frac{\partial u}{\partial z} - \frac{v^2}{r} & = \nu \Big[\frac{1}{r}\frac{\partial }{\partial r} \Big(r \frac{\partial u}{\partial r}\Big)+ \frac{\partial^2 u}{\partial z^2} -  \frac{u}{r^2}  \Big] - \frac{\partial p}{\partial r}, \\
\label{tNS2}
\frac{\partial v}{\partial t} + u \frac{\partial v}{\partial r} + w \frac{\partial v}{\partial z} + \frac{uv}{r} & = \nu \Big[\frac{1}{r}\frac{\partial }{\partial r} \Big(r \frac{\partial v}{\partial r}\Big)+ \frac{\partial^2 v}{\partial z^2} -  \frac{v}{r^2}  \Big], \\
\label{tNS3}
\frac{\partial w}{\partial t} + u \frac{\partial w}{\partial r} + w \frac{\partial w}{\partial z} \qquad  & = \nu \Big[\frac{1}{r}\frac{\partial }{\partial r} \Big(r \frac{\partial w}{\partial r}\Big)+ \frac{\partial^2 w}{\partial z^2}  \qquad \Big] - \frac{\partial p}{\partial z}, \\
\label{tNS4}
\frac{1}{r} \frac{\partial }{\partial r} (ru) + \frac{\partial w}{\partial z}  & = 0.
\end{align}
\end{subequations}

For general axisymmetric flows, the vorticity vector $\vec{\omega} = \nabla \times \vec{u}$ is expressed as 
\begin{align*}
\vec{\omega} &= \big({\omega}_r, {\omega}_{\vartheta}, {\omega}_z\big) 
= \bigg(-\frac{\partial v}{\partial z}, \frac{\partial u}{\partial z}-\frac{\partial w}{\partial r}, \, \frac{1}{r}\frac{\partial}{\partial r} (rv) \bigg) \, ,
\end{align*}
where $\omega_\vartheta := \frac{\partial u}{\partial z}-\frac{\partial w}{\partial r}$ is the component of vorticity in the $\vec{e}_\vartheta$-direction. Note that
for flows {\it without swirl} only the vorticity in the direction of $\vec{e}_\vartheta$ survives: $\vec{\omega} = \omega_\vartheta \vec{e}_\vartheta$.

The continuity equation \eqref{tNS4} may be integrated using  the axisymmetric stream function $\psi(r,z,t)$ by expressing the velocity components $u$ and $w$ in terms of $\psi$ as follows
\begin{equation*}
u = - \frac{1}{r} \frac{\partial \psi}{\partial z} \quad \text{and} \quad  w= \frac{1}{r} \frac{\partial \psi}{\partial r}.
\end{equation*}
The stream function $\psi$ and the vorticity component $\omega_\vartheta$ are independent of the swirl velocity $v$; hence, an equivalent formulation of $\eqref{tNS1} - \eqref{tNS4}$ is given by
\begin{subequations}
\begin{align}
\label{V1}
\frac{\partial {\omega}_{\vartheta}}{\partial t} + u \frac{\partial {\omega}_{\vartheta}}{\partial r} + w \frac{\partial {\omega}_{\vartheta}}{\partial z} - {\omega}_{\vartheta} \frac{u}{r} & = \nu \Big[\Delta {\omega}_{\vartheta} -  \frac{ {\omega}_{\vartheta}}{r^2}  \Big]  + \frac{\partial }{\partial z} \Big(\frac{v^2}{r} \Big),\\
\label{V2}
\frac{\partial v}{\partial t} + u \frac{\partial v}{\partial r} + w \frac{\partial v}{\partial z} + v \frac{u}{r} & = \nu \Big[\Delta v -  \frac{v}{r^2}  \Big], \\
\label{V3}
 {\omega}_{\vartheta} & = - \frac{1}{r} \Delta \psi + \frac{2}{r^2} \frac{\partial \psi}{\partial r},   \\
\label{V4}
\Big( u,w \Big) & =  \frac{1}{r} \nabla^{\perp} \psi,
\end{align}
\end{subequations}
in terms of $\omega_\theta$,  $v$ and  $\psi$, where
\begin{equation*}
\Delta f  = \frac{1}{r}\frac{\partial }{\partial r} \Big(r \frac{\partial f}{\partial r}\Big)+ \frac{\partial^2  f}{\partial z^2}  \qquad \text{and} \qquad \nabla^{\perp} f =  \Bigg(- \frac{\partial f}{\partial z} ,  \frac{\partial f}{\partial r} \Bigg).
\end{equation*}
This formulation is equivalent with the three-dimensional Navier-Stokes equations, \cite{Majda}.

\subsection{Self-Similar Formulation} 
In the analysis of partial differential equations it is often beneficial to seek solutions that conform with symmetry properties of the problem, for instance 
invariance under rotations, dilations, etc. The invariance of the Navier-Stokes equations under the scaling
\begin{equation}
\vec{u}(t,r,z) = \lambda \, \vec{u}(\lambda^2 t,\lambda r, \lambda z) \quad \textrm{and} \quad p(t,r,z) = \lambda^2\, p(\lambda^2 t,\lambda r, \lambda z),
\end{equation}
for $\lambda>0$, suggests to look for self-similar solutions of \eqref{tNS1}-\eqref{tNS4} that follow the ansatz 
\begin{equation}
\label{ansatz-time}
\begin{aligned}
& u(t,r,z) = \frac{1}{r} U(s,\xi), \quad v(t,r,z) = \frac{1}{r} V(s,\xi), \quad  w(t,r,z) =\frac{1}{r}  W(s,\xi) , 
\\[5pt]
&  p(t,r,z) =\frac{1}{r^2}  P(s,\xi), \quad \psi(t,r,z) = r  \Psi(s,\xi),  \quad \omega_\vartheta(t,r,z) =\frac{1}{r^2}  \Omega(s,\xi), 
\end{aligned}
\end{equation}
in the variables $\xi = \frac{z}{r}$ and $s= \frac{r}{\sqrt{t}}$. Such an ansatz induces a singularity at $r=0$. In the literature related to tornadoes the
singularity is considered to represent the line vortex resembling the tornado core, \cite{Mor66}. Using the ansatz \eqref{ansatz-time} in \eqref{tNS1}-\eqref{tNS4}, 
a tedious but straightforward calculation yields the system of partial differential equations for $(U,V,W)$,
\begin{equation}
\begin{aligned}
s\Big(U-\frac{s^2}{2}\Big) U_s  + \Big(W-\xi U\Big) U_{\xi}  - \Big(U^2+V^2\Big) &=  \nu \Big [ \mathcal{H}U - U \Big ] + 2P + \xi P_{\xi} - sP_s, \\
s\Big(U-\frac{s^2}{2}\Big) V_s  + \Big(W-\xi U\Big) V_{\xi} \hspace{73pt}  &=  \nu \Big[ \mathcal{H}V - V \Big], \\
s\Big(U-\frac{s^2}{2}\Big) W_s  + \Big(W-\xi U\Big) W_{\xi} \hspace{20pt}  - \,\,  UW \hspace{7pt}&=  \nu \Big[ \mathcal{H}W \qquad \Big] - P_{\xi},  \\
sU_s + W_{\xi} - \xi U_{\xi} &= 0 ,
\end{aligned}
\end{equation}
where the operator $\mathcal{H}$ is defined as 
\begin{equation}
\label{defH}
\mathcal{H}f = \bigg[ -\xi f + \Big((1+\xi^2)f\Big)_\xi  \bigg]_\xi - sf_s - 2 s \xi f_{s\xi}  + s^2 f_{ss}.
\end{equation}
The same ansatz transforms the velocity-vorticity equations $\eqref{V1} - \eqref{V4}$ into the form 
\begin{equation}
\begin{aligned}
s\Big(U-\frac{s^2}{2}\Big) \Omega_s  + \big(W-\xi U\big) \Omega_{\xi}  - 3U\Omega  -2VV_{\xi} &=  \nu \big[ \mathcal{D}\Omega - \Omega \big], \\
s\Big(U-\frac{s^2}{2}\Big) V_s  + \big(W-\xi U\big) V_{\xi} \hspace{83pt} &=  \nu \big[ \mathcal{H}V - V \big],  \\
\Psi -\xi \Psi_{\xi} - s\Psi_s + 2s\xi \Psi_{s\xi}  - s^2 \Psi_{ss}  - (1+\xi^2)\Psi_{\xi\xi}   &= \quad \Omega, \\
\Big(- \Psi_{\xi} , \Psi - \xi \Psi_{\xi} + s \Psi_s \Big) & = \big( U,W \big), 
\end{aligned}
\end{equation}
where $\mathcal{H}$ is defined in \eqref{defH} and $\mathcal{D}$ is given by
\begin{equation}
\label{defD}
\mathcal{D}f =  4f +5 \xi f_{\xi} - 3 sf_s - 2 s \xi f_{s\xi}  + s^2 f_{ss} +(1+\xi^2)f_{\xi\xi}.
\end{equation}
The choice of ansatz is not unique and various self-similar transformations can be used for the Navier-Stokes equations, for instance, one may consider an ansatz in variables $\xi = \frac{z}{r}$ and $\tau= \frac{t}{r^2}$.


\subsection{Stationary Self-Similar Formulation}
The tornadic funnel typically moves slowly compared to the internal swirling velocities, \cite{Rot13}. This motivates to
represent the core of the tornado via a vortex singularity, following \cite{Long58,Gold60}, and 
to study stationary axi-symmetric self-similar solutions for the Navier-Stokes equations.
Letting $\xi = \frac{z}{r}$, we seek for self-similar stationary solutions of \eqref{tNS1}-\eqref{tNS4} in the form
\begin{equation}\label{ansatz}
\begin{aligned}
& u(r,z) = \frac{1}{r} U(\xi), \quad v(r,z) = \frac{1}{r} V(\xi), \quad  w(r,z) =\frac{1}{r}  W(\xi) , \\
&  p(r,z) =\frac{1}{r^2}  P(\xi), \quad \psi(r,z) = r  \Psi(\xi),  \quad \omega_\vartheta(r,z) =\frac{1}{r^2}  \Omega(\xi). 
\end{aligned}
\end{equation}
Such solutions satisfy the system of ordinary differential equations
\label{sss-ns}
\begin{subequations}
\begin{align}
\label{eq-1}
-U \Big(\xi U\Big)' + U' W - V^2 &=  \nu \Big[\mathcal{L} U - U \Big] + 2P + \xi P' ,\\
-U \Big(\xi V\Big)' + V' W + UV &=  \nu \Big[\mathcal{L} V - V \Big], \\
\label{eq-3}
-U \Big(\xi W\Big)' + W' W &=  \nu \mathcal{L} W  - P', \\
W' - \xi U' &= 0 
\end{align}
\end{subequations}
where $\xi \in \mathbb{R}_+$ and the operator $\mathcal{L}$ defined as 
\begin{equation}\label{defL}
\mathcal{L}f = \bigg[ -\xi f + \Big((1+\xi^2)f\Big)'  \bigg]  '  \, . 
\end{equation}
If we multiply $\eqref{eq-1}$ by $\xi$ and subtract it from \eqref{eq-3}, we can rewrite the system as
\begin{subequations}
\begin{align*}
\bigg[\frac{1}{2}\Big(W - \xi U\Big)^2 + (1+\xi^2)P \bigg]'  &=   \nu \big\{ \mathcal{L}W - \xi \mathcal{L}U + \xi U \big\} - \xi V^2 ,\\
V' \Big(W - \xi U\Big) &= \nu \big\{ \mathcal{L}V - V \big\},  \\
\bigg[W \Big(W - \xi U\Big) + P \bigg]' &= \nu \mathcal{L}W, \\
\Big(W - \xi U\Big)' &= -U .
\end{align*}
\end{subequations}

For convenience, we introduce a new function $\theta(\xi)$ with $\theta = W -\xi U$. One checks from \eqref{V4} and \eqref{ansatz} that $\theta (\xi) = \Psi (\xi)$
is the self-similar version of the stream function. Now, $U$ and $W$ 
are expressed in terms of $\theta$ by
\begin{equation}\label{stream}
U = - \theta'  \, , \qquad W = \theta - \xi \theta^\prime \, .
\end{equation} 
A tedious but straightforward computation renders \eqref{sss-ns} to a non-linear system of ordinary differential equations for $(\theta, V, P)$,
\begin{subequations}
\label{ssform}
\begin{align}
\bigg[\frac{\theta^2}{2} + (1+\xi^2)P \bigg]' &=  \nu \bigg[\xi\theta - (1+\xi^2) \theta' \bigg]' -\xi V^2, \\
V' \theta &= \nu \Big[3\xi  V' + (1+\xi^2) V''\Big],  \label{n2}  \\
\bigg[\theta^2 - \xi \Big(\frac{\theta^2}{2}\Big)' + P \bigg]' &= \nu \Big[\xi\theta - \xi^2 \theta' - \xi (1+\xi^2) \theta'' \Big]',  
\end{align}
\end{subequations}
where the viscosity $\nu$ is a parameter and $(U,W)$ are determined by \eqref{stream}. The aim is to study  \eqref{ssform} in both the viscous $\nu>0$ as well as the inviscid $\nu=0$ cases, and to investigate the limiting relationship. Sections \ref{sec:euler} and \ref{sec:disceuler} are devoted to the inviscid case $\nu=0$: In Section \ref{sec:euler}  an explicit solution for \eqref{ssform}  (with $\nu=0$) is obtained. The existence of solutions with slip discontinuities is examined in Section \ref{sec:disceuler}. In Section \ref{sec:NS}, we express the system \eqref{ssform} for $\nu>0$ into an equivalent integrodifferential formulation and study its limit as $\nu\to0$. Numerical solutions are illustrated in Section \ref{sec:numerics}.

\section{A stationary self-similar solution for the axisymmetric Euler equations}
\label{sec:euler}
Consider first the Euler equations  \eqref{ssform} with $\nu = 0$:
\begin{subequations}
\begin{align}
\label{theta-eq}
\bigg[\frac{\theta^2}{2} + (1+\xi^2)P \bigg]' &= -\xi V^2 \\
\label{v-eq}
V' \theta &= 0 \\
\label{p-eq}
\bigg[\theta^2 - \xi \Big(\frac{\theta^2}{2}\Big)' + P \bigg]' &= 0\\
\label{u-eq}
\theta' &= - U
\end{align}
\end{subequations}
Equation $\eqref{v-eq}$ implies that if $\theta(\xi) \ne 0$ then $V(\xi)$ is a constant function. So,  assume first that $\theta \neq 0$ for $\xi \in (0, \infty)$ and that 
\begin{equation}\label{eqnV}
V \equiv V_{0},
\end{equation}
where $V_{0}$ is a constant proportional to the circulation around the axis. (The case where $\theta$ vanishes and where $V$ may exhibit discontinuities is examined in the next section.)
Substituting \eqref{eqnV} yields 
\begin{subequations}
\begin{align}
\label{sseuler-eq1}
\xi (1+\xi^2) \Big(\frac{\theta^2}{2}\Big)' - (1+2 \xi^2) \Big(\frac{\theta^2}{2}\Big) &= -\frac{\xi^2}{2} {V_0}^2 - E_0 (1+ \xi^2)+ A_0 \\
P &=  \xi \Big(\frac{\theta^2}{2}\Big)' - \theta^2 + E_0 
\end{align}
\end{subequations}
where $E_{0}$ and $A_{0}$ are integration constants. Dividing equation \eqref{sseuler-eq1} by $\xi^2 (1+\xi^2)^{\frac{3}{2}}$ gives
\begin{equation*}
\Bigg[ \frac{1}{\xi \sqrt{1+\xi^2}} \frac{\theta^2}{2} \Bigg] ' = -\frac {{V_0}^2}{2} \Bigg[ \frac{\xi} {\sqrt{1+\xi^2}} \Bigg] '  + {E_0}  \Bigg[ \frac {\sqrt{1+\xi^2}} {\xi} \Bigg] '  - A_0  \Bigg[ \frac{1+2\xi^2} {\xi \sqrt{1+\xi^2}} \Bigg] ' 
\end{equation*}
After an integration, we eventually obtain an explicit form for the stream function $\theta$ and an explicit solution for the Euler system \eqref{theta-eq} - \eqref{u-eq}
\begin{subequations}
\label{euler-sol}
\begin{align} 
\frac{\theta^2}{2} &= - \Big(\frac{V_0^2}{2} + 2{A_0} - {E_0}\Big) \xi^2 + {k_0} \xi \sqrt{1+\xi^2} + \Big({E_0} - {A_0}\Big)  
\label{eqn11}
\\
U &= \frac{1}{\theta} \Bigg[ 2\Big(\frac{V_0^2}{2} + 2{A_0} - {E_0}\Big) \xi - k_0 \frac{1+2\xi^2} {\sqrt{1+\xi^2}} \Bigg]  
\label{eqn12}
\\
W &= \frac{1}{\theta} \Bigg[k_0 \frac{\xi} {\sqrt{1+\xi^2}} + 2\Big({E_0} - {A_0}\Big)\Bigg] \\
P &= - k_0 \frac{\xi} {\sqrt{1+\xi^2}} + 2{A_0} - {E_0} 
\end{align}
\end{subequations}
where $V(\xi)$ is given by \eqref{eqnV} and $E_0$, $A_{0}$ and $k_0$ are integration constants.

We supplement system \eqref{theta-eq}-\eqref{u-eq} with suitable boundary conditions. Recall that our objective is to model a vortex line that interacts with a boundary surface. We impose no-penetration boundary condition at the boundary, namely $w = 0$ at $z = 0$. Moreover, we assume that $u = 0$ as $r \to 0$ which ensures that no mass is added or lost through the vortex line. Expressing these conditions in terms of the self-similar functions \eqref{ansatz} leads to
\begin{equation}
W(0) = 0 \quad \textrm{and} \quad U(\xi) \to 0 \quad \textrm{as} \quad \xi \to \infty,
\end{equation}
or in terms of $\theta$
\begin{equation}
\label{bc-euler}
\theta(0) = 0 \quad \textrm{and} \quad \theta'(\xi) \to 0 \quad \textrm{as} \quad \xi \to \infty \, .
\end{equation}
The boundary conditions applied to \eqref{euler-sol} provide the relations
\begin{subequations}
\begin{align}
\label{BC1}
{E_0} &= {A_0} \\
\label{BC2}
k_0  &=   \frac{{V_0}^2}{2} + {E_0} 
\end{align}
\end{subequations}
The derivation of \eqref{BC1}  is direct. To derive relation \eqref{BC2}, note that \eqref{eqn11}, \eqref{eqn12} imply
\begin{align*}
U = \frac{1}{\theta} \Bigg[ 2\Big(\frac{V_0^2}{2} + {E_0}\Big) \xi - k_0 \frac{1+2\xi^2} {\sqrt{1+\xi^2}} \Bigg] 
&=\frac{ 2\Big(\frac{V_0^2}{2} + {E_0}\Big) - k_0 \frac{1+2\xi^2} {\xi\sqrt{1+\xi^2}}}{\pm\sqrt{- 2\Big(\frac{V_0^2}{2} + {E_0}\Big)  + 2{k_0}\frac{\sqrt{1+\xi^2}} {\xi}}} 
\\[8pt]
&\xrightarrow{\xi\to\infty}  
\mp {\sqrt{- 2\Big(\frac{V_0^2}{2} + {E_0}\Big)  + 2{k_0}}} 
\end{align*}
Thus, as $\xi\to\infty$
 \[ U (\infty) = 0 =\mp{\sqrt{- 2\Big(\frac{V_0^2}{2} + {E_0}\Big)  + 2{k_0}}} \quad \]
 implies \eqref{BC2}.

Substituting \eqref{BC1}-\eqref{BC2}  into \eqref{euler-sol}, we obtain the explicit formula
\begin{equation}
\label{euler-sol-theta}
\theta^2  =  2{k_0}  \ \phi(\xi)  
\end{equation}
with $\phi$ the function
\begin{equation}
\label{defphi}
\phi(\xi) :=  \xi \big( \sqrt{1+\xi^2} - \xi \big) 
\end{equation}

\begin{lemma}{\bf [Properties of $\phi(\xi)$]}.
\label{phi}
The function $\phi(\xi)$ in \eqref{defphi} is shown in Figure \ref{fig:phi} and has the properties:
\begin{enumerate}
\item $\phi(\xi)$ is non-negative and bounded, 
$0 < \phi(\xi) < \frac{1}{2} \quad \forall \quad 0<\xi<\infty$.
\item $\phi(0) = 0$ and $\lim_{\xi\to \infty} \phi(\xi) =  \frac{1}{2}$
\item $\phi(\xi)$ is increasing and concave with
\begin{align*}
\phi'(\xi) = \frac{1}{\sqrt{1+\xi^2}} \big(1-2 \phi(\xi) \big) >0, \quad
\phi''(\xi) = - \frac{1-2 \phi(\xi)}{(1+\xi^2)^\frac{3}{2}} \big(\xi+2\sqrt{1+\xi^2} \big) < 0
\end{align*}
\end{enumerate} 
\end{lemma}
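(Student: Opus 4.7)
The plan is to establish all three properties by a direct calculation, using the rationalized form of $\phi$ as the main computational tool. Specifically, I would first rewrite
\[
\phi(\xi) = \xi\bigl(\sqrt{1+\xi^2}-\xi\bigr) = \frac{\xi}{\sqrt{1+\xi^2}+\xi},
\]
obtained by multiplying numerator and denominator by the conjugate. This single identity already delivers item 2 (setting $\xi=0$ yields $\phi(0)=0$; dividing by $\xi$ in numerator and denominator yields $\phi(\xi)=1/(\sqrt{1+\xi^{-2}}+1)\to 1/2$ as $\xi\to\infty$), and also item 1: for $\xi>0$ both numerator and denominator are positive, so $\phi(\xi)>0$, while $\sqrt{1+\xi^2}+\xi>2\xi$ gives the strict upper bound $\phi(\xi)<1/2$.

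Next I would compute $\phi'(\xi)$ from the original form $\phi(\xi)=\xi\sqrt{1+\xi^2}-\xi^2$, obtaining
\[
\phi'(\xi)=\sqrt{1+\xi^2}+\frac{\xi^2}{\sqrt{1+\xi^2}}-2\xi=\frac{1+2\xi^2-2\xi\sqrt{1+\xi^2}}{\sqrt{1+\xi^2}}.
\]
The key observation is that the numerator equals $1-2\bigl(\xi\sqrt{1+\xi^2}-\xi^2\bigr)=1-2\phi(\xi)$, which gives the stated formula $\phi'(\xi)=(1-2\phi(\xi))/\sqrt{1+\xi^2}$. Since item 1 guarantees $1-2\phi(\xi)>0$, this yields $\phi'(\xi)>0$ and hence monotonicity.

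Finally, differentiating $\phi'=(1-2\phi)/\sqrt{1+\xi^2}$ via the quotient rule and substituting $\phi'\sqrt{1+\xi^2}=1-2\phi$ to eliminate $\phi'$ gives
\[
\phi''(\xi)=-\frac{2(1-2\phi(\xi))}{\sqrt{1+\xi^2}\,}\cdot\frac{1}{\sqrt{1+\xi^2}}-\frac{\xi(1-2\phi(\xi))}{(1+\xi^2)^{3/2}}=-\frac{1-2\phi(\xi)}{(1+\xi^2)^{3/2}}\bigl(2\sqrt{1+\xi^2}+\xi\bigr),
\]
which is the claimed formula, and is strictly negative by the same sign argument.

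There is no real obstacle here; the only care required is to recognize the algebraic identity that turns the numerator of $\phi'$ into $1-2\phi$, which simultaneously gives the compact formula and ties the sign of $\phi'$ (and, after one more differentiation, of $\phi''$) to the bound $\phi<1/2$ established in item 1.
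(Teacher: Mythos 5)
Your proof is correct and is exactly the direct computation the paper leaves implicit (the lemma is stated without proof there); the rationalization $\phi(\xi)=\xi/(\sqrt{1+\xi^2}+\xi)$ and the identity $1+2\xi^2-2\xi\sqrt{1+\xi^2}=1-2\phi(\xi)$ cleanly deliver all three items, and your formulas for $\phi'$ and $\phi''$ check out.
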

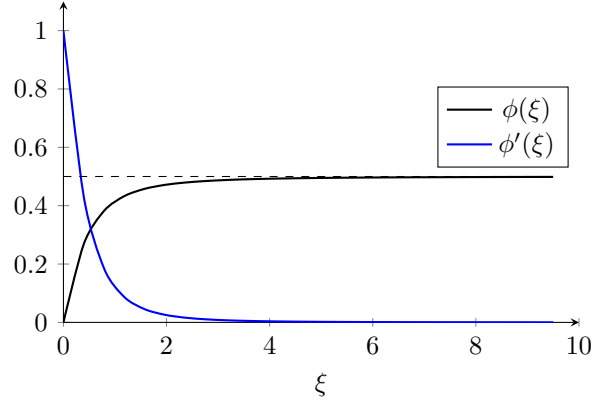
\begin{figure}[htbp]
\centering
\begin{tikzpicture}
  \begin{axis} [axis lines=left, 
  xlabel = $\xi$,
  xmin=0, xmax=10,
    ymin=0, ymax=1.1, yscale=.75
]
        \addplot [domain=0:9.5, smooth, thick] { x*sqrt(1+x^2) - x*x }; \addlegendentry{$\phi(\xi)$}
         \addplot [domain=0:9.5, smooth, blue, thick] {(1-2* (x*sqrt(1+x^2) - x*x))/sqrt(1+x^2)}; \addlegendentry{$\phi'(\xi)$}
           \addplot [domain=0:9.5, smooth, dashed,thin] { 0.5 };
  \end{axis}
\end{tikzpicture}
\caption{Functions $\phi(\xi)$  and $\phi'(\xi)$} \label{fig:phi}
\end{figure}
The sign of $\phi$ provides a constraint on the existence of $\theta$. From \eqref{euler-sol-theta}, $\theta(\xi)$ is well-defined iff $k_0 = E_0 + \frac{V_0^2}{2}>0$. Under this restriction, we derive an explicit family of solutions depending on two parameters $V_0$,  $E_0$
\begin{equation}\label{eulersolution}
\begin{aligned}
&\theta = \pm \sqrt{2 {k_0} \phi(\xi)} \quad 
&&U = \mp \sqrt{\frac{k_0}{2\phi(\xi)}} \Bigg[ \frac{1-2\phi(\xi)} {\sqrt{1+\xi^2}} \Bigg]  \qquad 
&&&V = V_{0} \\
&W = \pm \sqrt{\frac{k_0}{2\phi(\xi)}} \frac{\xi} {\sqrt{1+\xi^2}}  \quad 
&&P = \bigg(k_0 - \frac{V_0^2}{2}\bigg) - k_0 \frac{\xi} {\sqrt{1+\xi^2}}  \quad &&&
\end{aligned} 
\end{equation}
Observe that $\theta(\xi)$ can be either positive or negative. If $\theta$ is positive, then the flow is directed inward near the plane $z = 0$ and upward near the vortex line. Conversely, if $\theta$ is negative, the flow is directed outward near the plane $z = 0$ and downward near the vortex line, see Figure \ref{fig:th}. 
\begin{figure}[htbp]
   \begin{subfigure}{0.5\textwidth}
     \centering
     \includegraphics[scale=0.5]{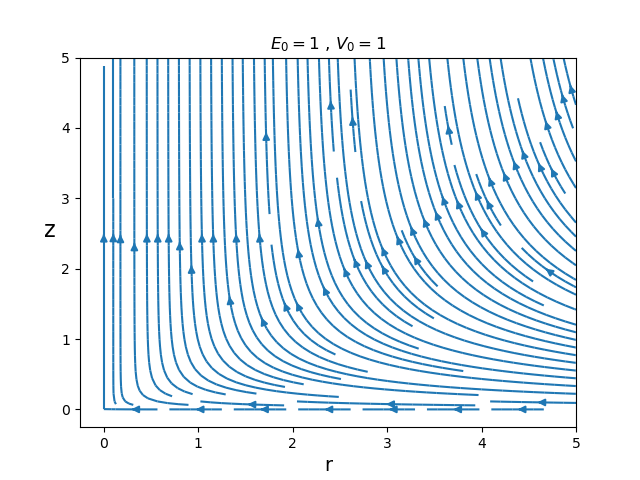}
      \caption{$\theta>0$}
   \end{subfigure}\hfill
   \begin{subfigure}{0.5\textwidth}
     \centering
     \includegraphics[scale=0.5]{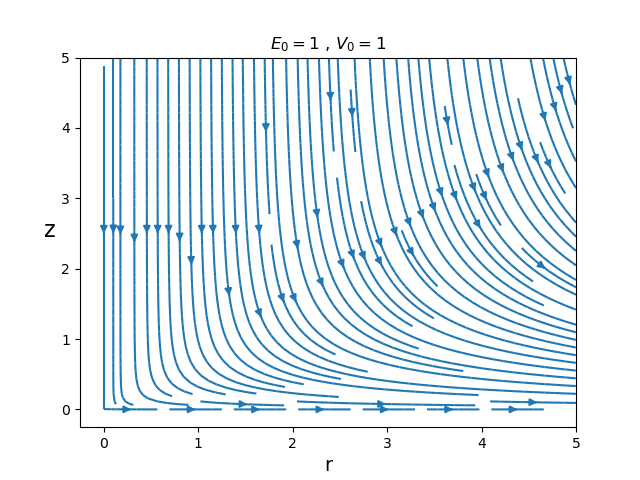}
     \caption{$\theta<0$}
   \end{subfigure}
    \caption{Velocity vector field $(u,w)$ in $(r,z)$ plane.} \label{fig:th}
\end{figure}

Note that
$P(\xi=0) = E_0 $ and $P(\xi=\infty)= - \frac{V_0^2}{2}$ and, as $P(\xi)$ is decreasing, the values of $P$ range 
from $-\frac{V_0^2}{2}$ to $E_0$. The constraint $k_0 >0$ may be interpreted as
\begin{equation*}
k_0 = E_0 + \frac{V_0^2}{2} = P(\xi=0) - P(\xi=\infty)>0 \, .
\end{equation*}
As $p = \frac{P}{r^2}$ and $v = \frac{V}{r}$ the constraint  may be thought as presenting a difference 
between  pressure induced forces  at the top of the vortex core and the intersection of the vortex core with the boundary.
$V_0$ is a measure of the circulation around the vortex core.
\begin{figure}[htbp]
\centering
\includegraphics[width=.45\linewidth]{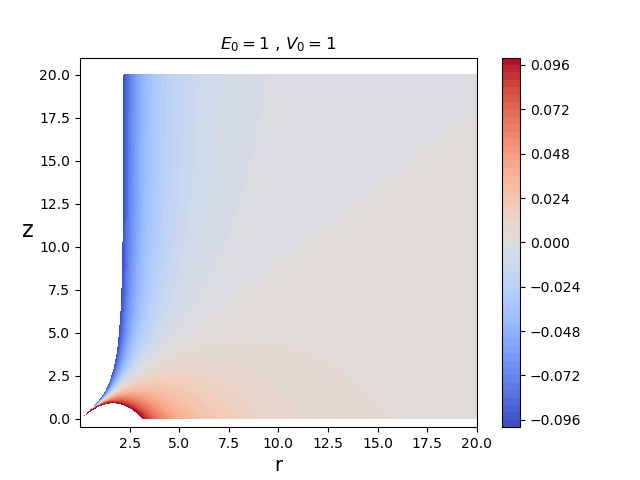}
\includegraphics[width=.45\linewidth]{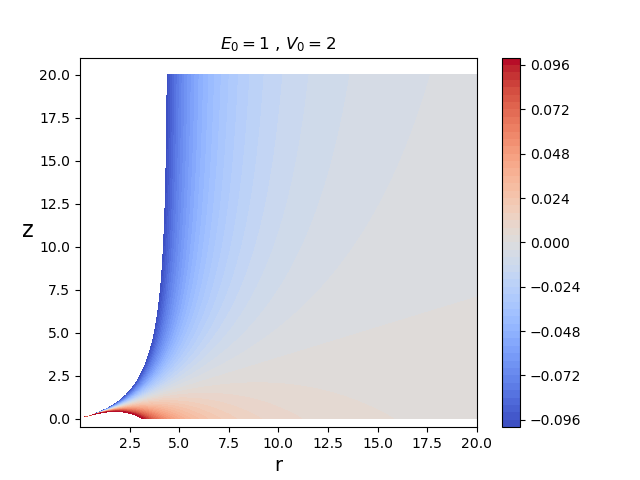}
\includegraphics[width=.45\linewidth]{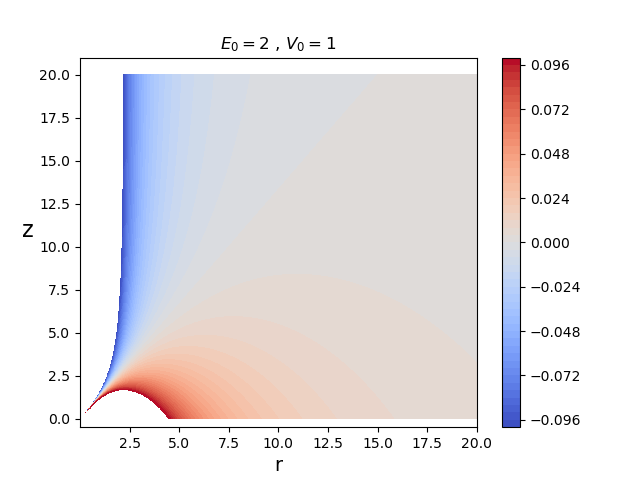}
\includegraphics[width=.45\linewidth]{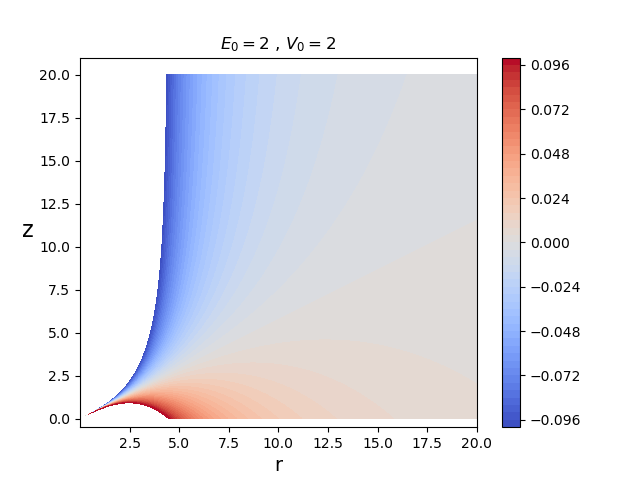}
\caption{Contour plots of pressure $p$ in $(r,z)$ plane for $V_0 = 1,2$ and $E_0=1,2$}
\label{fig:pres}
\end{figure}
Returning to the original variables through \eqref{ansatz} we obtain
\begin{equation}
\begin{aligned}
&u(r,z) = \mp \sqrt{\frac{k_0}{2}}  \frac{  (\sqrt{r^2 + z^2} - z )^2  }{ r \sqrt{  z  (r^2 + z^2) (\sqrt{r^2 + z^2} -z) }}
\quad
&&v(r,z) = \frac{V_0}{r}
\\
&w(r,z) = \pm \sqrt{\frac{k_0}{2}}  \frac{  z  }{ \sqrt{  z  (r^2 + z^2) (\sqrt{r^2 + z^2} -z) }} 
\quad
&&p(r,z) =  \frac{1}{r^2} \Big ( \big (k_0 - \frac{V_0^2}{2}\big) - k_0 \frac{z} {\sqrt{r^2 + z^2}} \Big )
\end{aligned}
\end{equation}
The streamlines of the vector field $(u,w)$ are sketched in Figure \ref{fig:th}, showing the two type of flows. In  Figure \ref{fig:pres} contour plots of the pressure are shown for four different choices of the parameters $E_0, \ V_0$.
\section{Do the axisymmetric Euler equations admit self-similar discontinuous solutions?}\label{sec:disceuler}
We consider the Euler equations \eqref{theta-eq} - \eqref{u-eq} and focus on solutions where $\theta$ vanishes and $V$ exhibits discontinuities. We study
weak solutions for the system \eqref{theta-eq} - \eqref{u-eq} and investigate the existence of discontinuous solutions subject to the boundary conditions used before.
\subsection{Discontinuous solutions - Weak Formulation}
Let $\varphi \in C^{\infty}_c ((0,\infty))$. Multiplying the equations \eqref{theta-eq} - \eqref{u-eq} by $\varphi$ and integrating by parts, we obtain
\begin{subequations}
\label{weak1}
\begin{align}
\label{weak-eq1}
-\int_0^\infty \bigg(\frac{\theta^2}{2} + (1+\xi^2)P \bigg) \varphi'(\xi) d\xi &=  -\int_0^\infty \xi V^2(\xi) \varphi(\xi) d\xi, \\
\label{weak-eq2}
-\int_0^\infty \bigg(\theta(\xi) V(\xi)\bigg) \varphi'(\xi) d\xi &= -\int_0^\infty U(\xi) V(\xi) \varphi(\xi) d\xi, \ \\
\label{weak-eq3}
-\int_0^\infty \bigg(\theta^2 - \xi \Big(\frac{\theta^2}{2}\Big)' + P \bigg) \varphi'(\xi) d\xi &= 0,\\
\label{weak-eqU}
-\int_0^\infty \theta \varphi'(\xi) d\xi &= -\int_0^\infty U(\xi) \varphi(\xi) d\xi.
\end{align}
\end{subequations}

\begin{definition}
\label{weak-def}
The function $(\theta,V,P)$ of class $\theta\in W^{1,1}((0,\infty))$, $V, P \in BV((0,\infty)) \cap L^{\infty}((0,\infty))$ is a weak solution of the system \eqref{theta-eq} - \eqref{u-eq} if \eqref{weak1} holds for any $\varphi \in C^{\infty}_c ((0,\infty))$.
\end{definition}
In the definition \ref{weak-def}, $BV$ denotes the space of functions of bounded variation. This property ensures that right and left limits of $(\theta, V, P)$ exist at any point $\xi$.
Since $\theta$ belongs to $W^{1,1}((0,\infty))$, then $\theta$ is absolutely continuous and of bounded variation. The derivative of $\theta$, i.e. $\theta'=-U$, exists almost everywhere and is Lebesgue integrable on $(0,\infty)$.

Suppose $(\theta,V,P)$ is a weak solution of \eqref{theta-eq} - \eqref{u-eq}. Using the theory of Sobolev spaces, \cite{Bre2010},  \cite{Foll2013}, one may explore properties of system \eqref{weak1}. Recalling that $\theta$ and as a consequence $\theta^2$ are absolutely continuous, we get that $\theta^2 + \xi \Big(\frac{\theta^2}{2}\Big)' \in L^1_{loc}((0,\infty))$. Also, $P \in L^{\infty}((0,\infty)) \subset  L^1_{loc}((0,\infty))$. Then, using \cite[Lemma 8.1]{Bre2010}, \eqref{weak-eq3} implies
there exists a constant $A$ such that
\begin{equation}
\label{f1}
\theta^2 - \xi \Big(\frac{\theta^2}{2}\Big)' + P = A,
\end{equation}
almost everywhere (a.e). The same lemma applied to \eqref{weak-eq1},  \eqref{weak-eq2} and \eqref{weak-eqU} as follows. Using \cite[Lemma 8.2]{Bre2010} 
we rewrite \eqref{weak-eq1},  \eqref{weak-eq2} and \eqref{weak-eqU} in the form
\begin{subequations}
\begin{align}
&\quad -\int_0^\infty \bigg(\frac{\theta^2}{2} + (1+\xi^2)P + \int_0^\xi \zeta V^2(\zeta) d\zeta\bigg) \varphi'(\xi) d\xi = 0. \\
&\quad -\int_0^\infty \bigg(\theta(\xi) V(\xi) + \int_0^\xi U(\zeta) V(\zeta) d\zeta \bigg) \varphi'(\xi) d\xi = 0.  \\
&\quad-\int_0^\infty \bigg(\theta + \int_0^\xi U(\zeta) d\zeta\bigg) \varphi'(\xi) d\xi = 0 \quad \forall \, \varphi.
\end{align}
\end{subequations}
The integrals $\int_0^\xi \zeta V^2(\zeta) d\zeta$, $\int_0^\xi U(\zeta) V(\zeta) d\zeta$ and $\int_0^\xi U(\zeta) d\zeta$ for $U=-\theta' \in L^1$ are well-defined.
Using \cite[Lemma 8.1]{Bre2010}, there are constants $B$, $C$ and $D$ such that
\begin{subequations}
\begin{align}
\label{f2}
\frac{\theta^2}{2} + (1+\xi^2)P + \int_0^\xi \zeta V^2(\zeta) d\zeta = B \quad\textrm{a.e.}, \\
\label{f3}
\theta(\xi) V(\xi) + \int_0^\xi U(\zeta) V(\zeta) d\zeta = C \quad\textrm{a.e.}, \\
\label{f4}
\theta(\xi)  + \int_0^\xi U(\zeta) d\zeta = D \quad\textrm{a.e.}.
\end{align}
\end{subequations}
Letting $\xi\to a_-$ and $\xi\to b_+$ for $0<a,b<\infty$ in \eqref{f1} and \eqref{f2} -\eqref{f4}, one may conclude that $(\theta,V,P)$ a weak solution of
 \eqref{theta-eq} - \eqref{p-eq} satisfies for $a<b$
\begin{subequations}
\label{weak2}
\begin{align}
\bigg(\frac{\theta^2(\xi)}{2} + (1+\xi^2)P(\xi)  \bigg)\Bigg|_{a-}^{b_+}  &= - \int_a^b \zeta V^2(\zeta) d\zeta, \\
\bigg(\theta(\xi) V(\xi)\bigg) \Bigg|_{a-}^{b_+} &= - \int_a^b U(\xi) V(\xi) d\xi, \ \\
\bigg(\theta^2 - \xi \Big(\frac{\theta^2}{2}\Big)' + P(\xi) \bigg)\Bigg|_{a-}^{b_+} &= 0, \\
\theta(\xi) \Big|_{a-}^{b_+} &= - \int_a^b U(\xi)  d\xi,
\end{align}
\end{subequations}

\subsubsection*{Jump Discontinuities}
Let $(\theta,V,P)$ be a weak solution of \eqref{theta-eq} - \eqref{u-eq} defined by definition \ref{weak-def}. Utilizing  \eqref{weak2}, we will compute the Rankine-Hugoniot jump conditions associated with the system. Suppose there exists a discontinuity at some point $\xi = \sigma$, $0<\sigma<\infty$ as in Figure \ref{fig:disc}. Letting $a,b \to \sigma$, \eqref{weak2} reduces to
\begin{subequations}
\label{weak3}
\begin{align}
\label{w1}
\frac{1}{2}\Big(\theta_+^2 - \theta_-^2\Big) + (1+\sigma^2)\Big(P_+ - P_-\Big)  &= 0, \\
\label{w2}
\theta_+ V_+ - \theta_- V_- &= 0,  \\
\label{w3}
\bigg(\theta_+^2 - \sigma \Big(\frac{\theta_+^2}{2}\Big)' - \Big(\theta_-^2 - \sigma \Big(\frac{\theta_-^2}{2}\Big)'\Big) \bigg)  + \bigg(P_+ - P_- \bigg)  &= 0, \\
\label{w4}
\theta_+ - \theta_-  &= 0,
\end{align}
\end{subequations}
where $\theta_\pm = \theta(\sigma\pm)$, $V_\pm = V(\sigma\pm)$ and $P_\pm = P(\sigma\pm)$ denote the one-sided limits at $\xi =\sigma$. Note that all one-sided limits exist and are finite since $(\theta,V,P)$ are of bounded variation.

The last equation in \eqref{weak3} implies that $\theta$ must be continuous for any $\xi \in (0, \infty)$, that is to say
\begin{equation*}
\theta_- =\theta_+,
\end{equation*}
and thus, \eqref{w1}-\eqref{w3} provide the Rankine-Hugoniot jump conditions
\vspace{-7pt}
\begin{subequations}
\label{jump}
\begin{align}
\label{jump-P}
\llbracket P  \rrbracket  = 0, \\
\llbracket \theta V  \rrbracket = 0, \\
\label{jump-th}
\llbracket \theta \, \theta'  \rrbracket = 0,
\end{align}
\end{subequations}
where $\llbracket F \rrbracket = F_+ - F_-$ is the jump operator. It is easy to see that $P(\xi)$ must also be continuous for any $\xi \in (0, \infty)$. On the other hand, for ${\theta}(\sigma)=0$ \eqref{jump} implies that $V(\xi)$ and $\theta'(\xi)$ have a jump discontinuity at $\xi = \sigma$. (The case where $\theta(\sigma) \neq 0$ implies that $V$ is continuous and leads to solutions described in Section 3.)
\vspace{20pt}
\begin{figure}[htbp]
\centering
\begin{tikzpicture}[xscale=5, yscale=20]
\draw [<->] (0,0.2) -- (0,0) -- (1.1,0); 
\node [left] at (0,0.2) {$z$}; \node [left] at (0,0.1) {$\xi\to\infty$};
\node [below] at (1.1,0) {$r$};
\draw [blue] (0,0) -- (1,0.15); \node [right] at (1,0.15) {$\xi = \sigma$}; 
\node [right] at (0.6,0.07) {${\theta}_-(\sigma) = 0$}; \node [left] at (0.5,0.1) {${\theta}_+(\sigma) = 0$}; 
\node [below] at (0.5,0) {$\xi=0$}; 
\end{tikzpicture}
\caption{Domain of solutions with discontinuity} \label{fig:disc}
\end{figure}
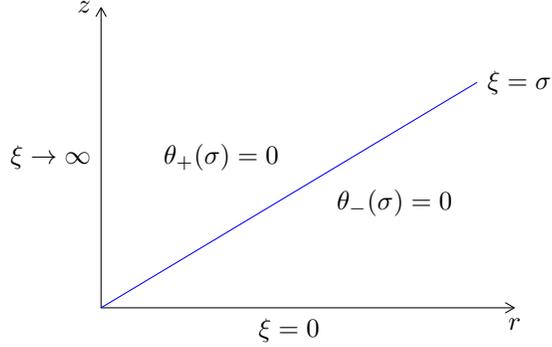

\subsubsection{Non-Existence of Discontinuous Solutions}
Consider $(\theta,V,P)$ a weak solution of \eqref{theta-eq} - \eqref{p-eq} given by definition \eqref{weak-def} with a discontinuity at a fixed point $\xi = \sigma$, $\sigma \in (0,\infty)$. Recall that $\theta'$ and $V$ may exhibit discontinuities, it suggests looking for solutions in the form
\begin{subequations}
\begin{align*}
\theta(\xi) =
\left\{
\begin{aligned}
& \theta_{-}(\xi), \\
&\theta_{+}(\xi) 
\end{aligned}
\right. 
\quad \text{and} \quad
V(\xi) =
\left\{
\begin{aligned}
& V_-(\xi) \,\, , \quad &&\xi \in (0,\sigma) \\
& V_+ (\xi)\,\, , \quad &&\xi \in (\sigma,\infty)
\end{aligned}
\right.
\end{align*}
\end{subequations}
We solve the system \eqref{theta-eq} - \eqref{p-eq} on $(0,\sigma)$ and $(\sigma,\infty)$ independently. Similar calculations to those described in Section \ref{sec:euler} lead again to a solution $\theta$ in form \eqref{eqn11} on each domain. Note that $V(\xi)$ is again a constant function with $V_-$ and $V_+$ constants. Since discontinuous solutions occur when ${\theta}(\sigma)=0$, the relation for $\theta$ becomes
\begin{equation}
\label{th-disc1}
\frac{\theta_\pm^2}{2} = {k_\pm} \bigg(\phi(\xi) - \phi(\sigma)\bigg) + \Big({k_\pm}-\frac{V_\pm^2}{2} - 2{A_\pm} + {E_\pm}\Big) \big(\xi^2 - \sigma^2\big).
\end{equation} 
where $k_\pm, A_\pm$ and $E_\pm$ are integration constants. We next impose the boundary conditions \eqref{bc-euler}, that is
\[
{\theta}_-(0) = 0 \quad \textrm{and} \quad {\theta'_+}(\xi) \to 0 \quad \textrm{as} \quad \xi \to \infty,
\]
which imply
\begin{align}
\label{bc-ds1}
{k_-}-\frac{V_-^2}{2} - 2{A_-} + {E_-} &= - {k_-} \frac{1}{\sigma^2}  \phi(\sigma), \\
\label{bc-ds2}
k_+ - \frac{{V_+}^2}{2} - 2{A_+} + {E_+} &= 0.
\end{align}
The derivation of \eqref{bc-ds1} is direct. To derive relation \eqref{bc-ds2}, observe that \eqref{th-disc1} yields
\begin{align*}
\theta'_+ &= \frac{1}{\theta} \bigg[k_+ \phi'(\xi) + 2\bigg(k_+ - \frac{V_+^2}{2}- 2A_+ + E_+\bigg) \xi \bigg] \\
&= \frac{k_+ \frac{\phi'(\xi)}{\xi} + 2\bigg(k_+ - \frac{V_+^2}{2}- 2A_+ + E_+\bigg)} {\pm\sqrt{2k_+ \frac{\phi(\xi)-\phi(\sigma)}{\xi^2} + 2\bigg(k_+ - \frac{V_+^2}{2}- 2A_+ + E_+\bigg) \frac{\xi^2-\sigma^2}{\xi^2}}}\\
&\xrightarrow []{\xi\to\infty}  \pm \sqrt{2\bigg(k_+ - \frac{V_+^2}{2}- 2A_+ + E_+\bigg)} ,
\end{align*}
Therefore, as $\xi \to \infty$
\[\theta'(\infty) = 0 = k_+ - \frac{{V_+}^2}{2} - 2{A_+} + {E_+},\]
provides \eqref{bc-ds2}.
Substituting now \eqref{bc-ds1}-\eqref{bc-ds2} into \eqref{th-disc1}, we obtain the explicit formula
\begin{align}
\label{th-disc2}
\frac{\theta^2}{2}(\xi) =
\left\{
\begin{aligned}
&{k_-} \Bigg[\phi(\xi) - \phi(\sigma)  -  \frac{\xi^2-\sigma^2}{\sigma^2}\phi(\sigma)\Bigg] \,\, , \quad &&\xi \in (0,\sigma) \\
&{k_+} \bigg[ \phi(\xi) - \phi(\sigma) \bigg] \,\, , \quad &&\xi \in (\sigma,\infty)
\end{aligned}
\right.
\end{align}
From \eqref{euler-sol} and \eqref{th-disc2}, we then derive an explicit family of discontinuous solutions depending on parameters $V_\pm, E_\pm$ and $\sigma$ 
\begin{subequations}
\label{euler-disc}
\begin{align}
&U =
\left\{
\begin{aligned}
&\frac{k_-}{\theta_-} \,\, \Bigg[\frac{2\phi(\sigma)}{\sigma^2} \xi -  \phi'(\xi)\Bigg]  \\
& - \frac{k_+}{\theta_+} \,\, \phi'(\xi)
\end{aligned}
\right.\, \, && \qquad
W =
\left\{
\begin{aligned}
& \frac{k_-}{\theta_-} \ \frac{\xi} {\sqrt{1+\xi^2}} \,\, , \quad &&\xi \in (0,\sigma) \\
&\frac{k_+}{\theta_+} \Bigg[\frac{\xi} {\sqrt{1+\xi^2}} - 2 \phi(\sigma)\Bigg] \,\, , \quad &&\xi \in (\sigma,\infty)
\end{aligned}
\right.\\
&V =
\left\{
\begin{aligned}
& V_-  \\
& V_+ 
\end{aligned}
\right.\qquad && \qquad
P = 
\left\{
\begin{aligned}
&  E_- - k_- \frac{\xi} {\sqrt{1+\xi^2}}  \,\, , \quad &&\xi \in (0,\sigma) \\
& E_+ - {k_+} \Bigg(\frac{\xi} {\sqrt{1+\xi^2}} - 2\phi(\sigma) \Bigg) \,\, , \quad &&\xi \in (\sigma,\infty)
\end{aligned}
\right.
\end{align}
\end{subequations}
where $E_- = k_-\big(1 + \frac{\phi(\sigma)}{\sigma^2}\big) - \frac{V_-^2}{2}$ and $E_+ = k_+ ( 1-2\phi(\sigma)) -\frac{V_+^2}{2} $. Although solutions $U(\xi)$ and $W(\xi)$ tend to infinity near the discontinuity at $\xi=\sigma$, they achieve the required regularity in definition \eqref{weak-def}. In particular, examining the denominator yields
\begin{subequations}
\begin{align*}
U(\xi) \sim W(\xi) \sim
\left\{
\begin{aligned}
&\frac{k_-}{\theta_-} \sim \sqrt{\frac{k_-}{\Big(\phi(\xi) - \phi(\sigma)\Big) -  \frac{\phi(\sigma)}{\sigma^2} \Big(\xi^2 - \sigma^2\Big)}}\sim \frac{\sqrt{k_-}}{\sqrt{\lvert \xi -\sigma \rvert}}  \,\, , \quad &&\textrm{as} \quad \xi \to \sigma_- \\
& \frac{k_+}{\theta_+} \sim \sqrt{\frac{k_+}{\Big(\phi(\xi) - \phi(\sigma)\Big)}} \sim \frac{\sqrt{k_+}}{\sqrt{\lvert \xi -\sigma \rvert}} \,\, , \quad &&\textrm{as} \quad \xi \to \sigma_+
\end{aligned}
\right.
\end{align*}
\end{subequations}
and therefore the singularities $\xi=\sigma$ are integrable, i.e. $U,W \in L^1_{loc}((0,\infty))$. Analysis of these singularities leads to the following theorem.

\begin{theorem}
\label{theorem1}
Let $(\theta, V, P)$ be a weak solution of \eqref{theta-eq} - \eqref{p-eq} of class $\theta\in W^{1,1}((0,\infty))$, $V,P \in BV((0,\infty)) \cap L^{\infty}((0,\infty))$ which satisfies the boundary conditions
\begin{equation*}
V(0+)=V_0, \quad P(0+) = E_0, \quad \theta(0) = 0, \quad \theta'(\infty)=0.
\end{equation*}
There does not exist a solution $(\theta, V, P)$ with a discontinuity at a single point that fulfils jump conditions \eqref{jump}. 
\end{theorem}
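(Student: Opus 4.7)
The plan is to exploit the explicit parametric form of discontinuous candidates already derived in \eqref{th-disc2}--\eqref{euler-disc} and show that the jump condition \eqref{jump-th} is incompatible with the sign constraints forced by $\theta_\pm^2 \geq 0$ and by the boundary data. Since $\theta \in W^{1,1}((0,\infty))$ is continuous, one has $\theta(\sigma-)=\theta(\sigma+)=0$, and $\theta^2/2$ must be given on each subinterval by \eqref{th-disc2} with free parameters $k_-, k_+, V_-, V_+$. First I would determine $k_-$ from the boundary data at $\xi=0$: applying $V(0+)=V_0$ and $P(0+)=E_0$ to \eqref{euler-disc} gives $V_-=V_0$ and
\[
k_- = \frac{E_0 + \tfrac{1}{2}V_0^2}{1+\phi(\sigma)/\sigma^2} > 0,
\]
under the physically relevant condition $E_0+\tfrac{1}{2}V_0^2 >0$ (the same restriction needed for the continuous solutions of Section \ref{sec:euler}). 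On $(\sigma,\infty)$, positivity of $\theta_+^2 = 2k_+[\phi(\xi)-\phi(\sigma)]$ combined with the strict monotonicity of $\phi$ (Lemma \ref{phi}) forces $k_+ \geq 0$.

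Next I would compute the one-sided limits of $\theta\theta' = (\theta^2/2)'$ at $\sigma$. Differentiating \eqref{th-disc2} gives
\[
(\theta_+^2/2)'(\sigma+) = k_+ \, \phi'(\sigma), \qquad (\theta_-^2/2)'(\sigma-) = k_-\left[\phi'(\sigma) - \tfrac{2\phi(\sigma)}{\sigma}\right].
\]
Using $\phi'(\sigma) = (1-2\phi(\sigma))/\sqrt{1+\sigma^2}$ from Lemma \ref{phi}, together with the algebraic identity $\phi(\sigma)(\sigma+\sqrt{1+\sigma^2})=\sigma$ (which follows at once from \eqref{defphi}), a short calculation collapses the bracket on the left-hand side to $-1/\sqrt{1+\sigma^2}$. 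Therefore
\[
\llbracket \theta\theta'\rrbracket \;=\; \frac{(1-2\phi(\sigma))\, k_+ \,+\, k_-}{\sqrt{1+\sigma^2}}.
\]

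Imposing \eqref{jump-th} finally yields $(1-2\phi(\sigma))\,k_+ + k_- = 0$, which, by Lemma \ref{phi}, is incompatible with $k_+ \geq 0$ and $k_- > 0$, since $1-2\phi(\sigma) > 0$ on $(0,\infty)$; this produces the contradiction. The main obstacle is computational rather than conceptual: although $\theta'$ blows up like $|\xi-\sigma|^{-1/2}$ at the candidate discontinuity, the product $\theta\theta'$ has finite one-sided limits given by the smooth derivatives of $\theta^2/2$, and the crucial step is the simplification of $\phi'(\sigma) - 2\phi(\sigma)/\sigma$, where the identity $\phi(\sigma)(\sigma+\sqrt{1+\sigma^2})=\sigma$ produces the clean coefficient $-1/\sqrt{1+\sigma^2}$ and reveals the sign obstruction. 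The remaining Rankine--Hugoniot condition \eqref{jump-P} plays no role in the contradiction; it only fixes a relation between $V_+^2$ and $V_-^2$.
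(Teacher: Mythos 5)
Your proposal is correct and follows essentially the same route as the paper: both reduce to the explicit two-parameter forms \eqref{th-disc2}, evaluate the jump $\llbracket \theta\,\theta'\rrbracket$ using the identity $\phi'(\sigma)-2\phi(\sigma)/\sigma=-1/\sqrt{1+\sigma^2}$, and reach a contradiction between the resulting relation $(1-2\phi(\sigma))k_+ + k_-=0$ (equivalently $k_+/k_-<0$) and the positivity of $k_\pm$. The only divergence is how you obtain $k_->0$: you read it off the boundary data under the additional assumption $E_0+\tfrac12 V_0^2>0$, whereas the paper gets $k_->0$ unconditionally from positivity of $\theta_-^2=2k_-J(\xi)$ together with $J(0)=J(\sigma)=0$ and $J''<0$ (so $J>0$ on $(0,\sigma)$), which proves the nonexistence without restricting the data.
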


\begin{proof}
Suppose there exists a weak solution $\theta$ given by \eqref{th-disc2} that satisfies jump conditions \eqref{jump}. Consider the condition \eqref{jump-th} applied to \eqref{th-disc2}, namely
\[\theta_+(\sigma)\theta'_+(\sigma) =  \theta_-(\sigma) \theta'_-(\sigma).\]
It provides a constraint on the constants $k_+,k_-$
\begin{align}
\label{eq1}
\frac{k_+}{k_-} &= 1 - 2\frac{\phi(\sigma)}{\sigma \, \phi'(\sigma)}<0,
\end{align} 
which implies that $k_+$ and $k_-$ must have different signs. The derivation of \eqref{eq1} is direct.

Next, we determine the signs of $k_+,k_-$. Since $\theta^2 (\xi) > 0$ for $\xi \ne \sigma$, relation \eqref{th-disc2} imposes restrictions on the signs of $k_+,k_-$. Recall Lemma \eqref{phi}, the relation \eqref{th-disc2} implies that $k_+>0$. To find the sign of $k_-$, set
\begin{align}
\label{eq-J}
J(\xi) = \phi(\xi) -  \phi(\sigma) -  \frac{\phi(\sigma)}{\sigma^2} (\xi^2 - \sigma^2),
\end{align}
for $\xi\in(0,\sigma)$ and observe that $J(0)= J(\sigma) = 0$ and $J''<0$. Hence $J(\xi)>0$, $\forall \xi\in(0,\sigma)$ and $k_->0$. This contradicts with \eqref{eq1} and thus, there does not exist a discontinuous solution $(\theta, V, P)$ that fulfils the jump conditions \eqref{jump}.
\end{proof}

\subsubsection{Non-Existence of Multiple Discontinuities}
Let $(\theta,V,P)$ be a weak solution of \eqref{theta-eq} - \eqref{u-eq} with discontinuities at multiple points $\xi = \sigma_i$, $0<\sigma_i <\infty$, $i>0$. We consider first the case where the solution exhibits discontinuities at two points $\sigma_1$ and $\sigma_2$, $\sigma_1<\sigma_2$. This suggests seeking solutions in the form
\begin{subequations}
\begin{align*}
\theta (\xi) =
\left\{
\begin{aligned}
& \theta_{1}(\xi) \\
&\theta_{2}(\xi) \\
&\theta_{3}(\xi) 
\end{aligned}
\right. 
\quad \text{and} \quad
V (\xi)=
\left\{
\begin{aligned}
& V_1 (\xi)\,\, , \quad &&\xi \in (0,\sigma_1) \\
& V_2 (\xi)\,\, , \quad &&\xi \in (\sigma_1,\sigma_2)  \\
& V_3 (\xi) \,\, , \quad &&\xi \in (\sigma_2,\infty)
\end{aligned}
\right.
\end{align*}
\end{subequations}
To examine whether such solutions exist, it is sufficient to derive the solution $\theta$ by solving system \eqref{theta-eq} - \eqref{p-eq} on each domain independently. Same calculations as those described in Section \ref{sec:euler} lead to functions $\theta_1, \theta_2$ and $\theta_3$ of the form \eqref{eqn11}, that is
\begin{equation}
\label{eq6}
\frac{\theta_j^2}{2} = {k_j} \phi(\xi) + \Big({k_j}-\frac{V_j^2}{2} - 2{A_j} + {E_j}\Big) \xi^2 + \Big({E_j} - {A_j}\Big), \quad j=1,2,3
\end{equation}
where $A_j,E_j, k_j$ are integration constants. Note that $V(\xi)$ is again a constant function with $V_1, V_2$ and $V_3$ constants. Recall that discontinuous solutions occur when $\theta(\sigma_1)=\theta(\sigma_2)=0$, it implies
\[\theta_1(\sigma_1) = \theta_2(\sigma_1) = \theta_2(\sigma_2) = \theta_3(\sigma_2) = 0,\]
and thus, $\theta$ takes the following form
\begin{equation}
\label{eq7}
\frac{\theta^2}{2} =
\left\{
\begin{aligned}
&\frac{\theta_{1}^2}{2} = {k_1} \Bigg[\phi(\xi) - \phi(\sigma_1) + \Big({k_1}-\frac{V_1^2}{2} - 2{A_1} + {E_1}\Big) (\xi^2-\sigma_1^2)\Bigg] \,\, , \quad &&\xi \in (0,\sigma_1) \\
&\frac{\theta_{2}^2}{2} = {k_2} \Bigg[\phi(\xi) - \phi(\sigma_1)  -  \frac{\phi(\sigma_2)-\phi(\sigma_1)}{\sigma_2^2 - \sigma_1^2} (\xi^2-\sigma_1^2) \Bigg] \,\, , \quad &&\xi \in (\sigma_1,\sigma_2) \\
&\frac{\theta_{3}^2}{2}= {k_3} \bigg[ \phi(\xi) - \phi(\sigma_2) + \Big({k_3}-\frac{V_3^2}{2} - 2{A_3} + {E_3}\Big) (\xi^2-\sigma_2^2) \bigg] \,\, , \quad &&\xi \in (\sigma_2,\infty)
\end{aligned}
\right.
\end{equation}

Imposing now the boundary conditions \eqref{bc-euler} into \eqref{eq7} yields the explicit formula of discontinuous solution $\theta$ 
\begin{equation}
\label{disc-sol3}
\frac{\theta^2}{2} =
\left\{
\begin{aligned}
&{k_1} \Bigg[\phi(\xi) - \phi(\sigma_1)  -  \frac{\xi^2-\sigma_1^2}{\sigma_1^2}\phi(\sigma_1)\Bigg] \,\, , \quad &&\xi \in (0,\sigma_1) \\
&{k_2} \Bigg[\phi(\xi) - \phi(\sigma_1)  -  \frac{\phi(\sigma_2)-\phi(\sigma_1)}{\sigma_2^2 - \sigma_1^2} (\xi^2-\sigma_1^2) \Bigg] \,\, , \quad &&\xi \in (\sigma_1,\sigma_2) \\
&{k_3} \bigg[ \phi(\xi) - \phi(\sigma_2) \bigg] \,\, , \quad &&\xi \in (\sigma_2,\infty)
\end{aligned}
\right.
\end{equation}
Note that functions $\theta_1$ and $\theta_3$ have the same form as $\theta_-$ and $\theta_+$ in \eqref{th-disc2} for $\sigma=\sigma_1$ and $\sigma=\sigma_2$ respectively. Likewise $k_-$ and $k_+$, we conclude that $k_1>0$ and $k_3>0$ using the same reasoning. To define the sign of $k_2$ in the remaining case, we consider the following lemma.
\begin{lemma}
\label{lemma1}
Let $\xi \in (\sigma_1,\sigma_2)$. Then,
\begin{equation}
J(\xi) = \phi(\xi) - \phi(\sigma_1)  -  \frac{\phi(\sigma_2)-\phi(\sigma_1)}{\sigma_2^2 - \sigma_1^2} (\xi^2-\sigma_1^2) >0.
\end{equation}
\end{lemma}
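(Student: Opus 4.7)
My plan is to recognize that $J$ is a strictly concave function on $(\sigma_1,\sigma_2)$ that vanishes at both endpoints, and hence must be strictly positive in the interior.

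First I would verify the boundary behavior: a direct substitution gives $J(\sigma_1)=0$ and $J(\sigma_2)=\phi(\sigma_2)-\phi(\sigma_1)-\frac{\phi(\sigma_2)-\phi(\sigma_1)}{\sigma_2^2-\sigma_1^2}(\sigma_2^2-\sigma_1^2)=0$, so $J$ vanishes at both endpoints.

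Next I would compute two derivatives. Since the quadratic term $(\xi^2-\sigma_1^2)$ has second derivative $2$, we get
\begin{equation*}
J''(\xi) \;=\; \phi''(\xi) \;-\; 2\,\frac{\phi(\sigma_2)-\phi(\sigma_1)}{\sigma_2^2-\sigma_1^2}.
\end{equation*}
Lemma \ref{phi} gives $\phi''(\xi)<0$ on $(0,\infty)$, and since $\phi$ is strictly increasing (also from Lemma \ref{phi}) we have $\phi(\sigma_2)>\phi(\sigma_1)$, so the quotient $\frac{\phi(\sigma_2)-\phi(\sigma_1)}{\sigma_2^2-\sigma_1^2}$ is strictly positive. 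Consequently $J''(\xi)<0$ on all of $(\sigma_1,\sigma_2)$, i.e.\ $J$ is strictly concave.

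The conclusion is then immediate: a strictly concave function on $[\sigma_1,\sigma_2]$ that equals zero at both endpoints must be strictly positive on the open interval (otherwise, combined with $J(\sigma_1)=J(\sigma_2)=0$, one could produce a point where the graph lies on or below the chord, contradicting strict concavity). There is no real obstacle here; the only small care is making sure to invoke the strict concavity from Lemma \ref{phi} (which gives $\phi''<0$ strictly, not merely $\leq 0$), so the inequality $J>0$ is strict rather than just nonnegative — but this is exactly what Lemma \ref{phi} provides. An equally clean alternative is to set $t=\xi^2$ and $\psi(t)=\phi(\sqrt t)$, check that $\psi''(t)=-\tfrac14(t+t^2)^{-3/2}<0$, and read off $J>0$ as the statement that the chord of a strictly concave function lies strictly below its graph; I would mention this as a parenthetical remark to make the geometric meaning transparent.
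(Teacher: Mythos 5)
Your proof is correct, but it takes a different route from the paper's. The paper factors $J(\xi) = (\xi^2-\sigma_1^2)\bigl[F(\xi)-F(\sigma_2)\bigr]$ with $F(\xi)=\frac{\phi(\xi)-\phi(\sigma_1)}{\xi^2-\sigma_1^2}$, and then shows $F$ is decreasing by differentiating and applying the mean value theorem together with the concavity of $\phi$ (so that $\phi'(\xi)\le\phi'(c)$ for $c\in(\sigma_1,\xi)$). You instead observe that $J$ itself is strictly concave, since $J''(\xi)=\phi''(\xi)-2\frac{\phi(\sigma_2)-\phi(\sigma_1)}{\sigma_2^2-\sigma_1^2}<0$ (using $\phi''<0$ and $\phi$ strictly increasing from Lemma \ref{phi}), and that $J(\sigma_1)=J(\sigma_2)=0$, so strict concavity forces $J>0$ on the open interval. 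Both arguments rest on the same ingredient, the strict concavity of $\phi$, but yours is more direct: it avoids introducing the auxiliary difference quotient $F$ and the mean value theorem entirely, at the cost of nothing. Your parenthetical reformulation via $t=\xi^2$ and $\psi(t)=\phi(\sqrt t)$, with $\psi''(t)=-\tfrac14(t+t^2)^{-3/2}<0$, is also correct and makes the geometric content transparent: $J>0$ is exactly the statement that the chord of the strictly concave function $\psi$ lies strictly below its graph on $(\sigma_1^2,\sigma_2^2)$. This viewpoint has the additional merit of explaining in one stroke why the analogous function in the proof of Theorem \ref{theorem1} (the case $J(\xi)=\phi(\xi)-\phi(\sigma)-\frac{\phi(\sigma)}{\sigma^2}(\xi^2-\sigma^2)$ on $(0,\sigma)$) is also positive.
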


\begin{proof}
For $\xi \in (\sigma_1,\sigma_2)$, set
\begin{align*}
J(\xi) &= \phi(\xi) - \phi(\sigma_1)  -  \frac{\phi(\sigma_2)-\phi(\sigma_1)}{\sigma_2^2 - \sigma_1^2} (\xi^2-\sigma_1^2) =(\xi^2-\sigma_1^2) \Big[F(\xi)  -  F(\sigma_2) \Big]
\end{align*} 
where $F(\xi)=\frac{\phi(\xi) - \phi(\sigma_1)}{\xi^2-\sigma_1^2}$. From the mean value theorem, observe
\[F'(\xi) = \frac{1}{\xi^2-\sigma_1^2}\bigg[\phi'(\xi) - \frac{2 \xi}{\xi+\sigma_1} \frac{\phi(\xi) - \phi(\sigma_1)}{\xi-\sigma_1} \bigg] = \frac{1}{\xi^2-\sigma_1^2}\bigg[\phi'(\xi) - \frac{2 \xi}{\xi+\sigma_1} \phi'(c) \bigg],\]
where $c \in (\sigma_1,\xi)$. Recall Lemma \eqref{phi}, the concavity of $\phi$ implies that $F$ is decreasing 
\begin{align*}
F'(\xi) \leq \frac{\phi'(c)}{\xi^2-\sigma_1^2}\bigg[1 - \frac{2 \xi}{\xi+\sigma_1}\bigg] <0.
\end{align*}
Hence, we get
\[ \Big[F(\xi)  -  F(\sigma_2) \Big]>0, \quad  \xi \in (\sigma_1,\sigma_2),\]
which completes the proof.
\end{proof}
Lemma \eqref{lemma1} implies that $k_2>0$. Considering the signs of constants $k_1,k_2$ and $k_3$, we proceed to the main theorem about the non-existence of solutions with discontinuities at two points.

\begin{theorem}
\label{theorem2}
Let $(\theta, V, P)$ be a weak solution of \eqref{theta-eq} - \eqref{p-eq} of class $\theta\in W^{1,1}((0,\infty))$, $V \in BV((0,\infty)) \cap L^{\infty}((0,\infty))$ and $P \in BV((0,\infty)) \cap L^{\infty}((0,\infty))$ which satisfies the boundary conditions 
\begin{equation*}
V(0+)=V_0, \quad P(0+) = E_0, \quad \theta(0) = 0, \quad \theta'(\infty)=0.
\end{equation*}
There does not exist a solution $(\theta, V, P)$ with discontinuities at two points that satisfies jump conditions \eqref{jump}. 
\end{theorem}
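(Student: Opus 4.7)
The plan is to extend the argument of Theorem \ref{theorem1} by applying the Rankine--Hugoniot condition $\llbracket\theta\theta'\rrbracket = 0$ at each of the two candidate discontinuities $\sigma_1<\sigma_2$, thereby producing linear relations among the three integration constants $k_1,k_2,k_3$ that contradict the positivity already recorded in the excerpt: $k_1$ and $k_3$ are positive by the argument of Theorem \ref{theorem1} applied to the outer pieces $\theta_1$ and $\theta_3$ (which have exactly the same structure as $\theta_-$ and $\theta_+$ in \eqref{th-disc2}), while $k_2>0$ is supplied by Lemma \ref{lemma1}.

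Starting from the explicit formula \eqref{disc-sol3}, I would compute $\theta\theta' = \tfrac{1}{2}(\theta^2)'$ on either side of each jump. Writing $\tfrac{\theta_j^2}{2} = k_j J_j(\xi)$ for the bracketed expressions $J_1,J_2,J_3$, the jump condition \eqref{jump-th} at $\xi = \sigma_1$ reduces to
\begin{equation*}
k_1 \Bigl[\phi'(\sigma_1) - \tfrac{2\phi(\sigma_1)}{\sigma_1}\Bigr]
= k_2 \Bigl[\phi'(\sigma_1) - \tfrac{2\sigma_1\bigl(\phi(\sigma_2)-\phi(\sigma_1)\bigr)}{\sigma_2^2-\sigma_1^2}\Bigr],
\end{equation*}
and an analogous identity couples $k_2$ and $k_3$ at $\sigma_2$. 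The coefficient multiplying $k_1$ is identical to the one that arose in Theorem \ref{theorem1}, and the identity $2\phi(\sigma)(\sqrt{1+\sigma^2}+\sigma) = 2\sigma$ combined with $\phi'(\sigma) = (1-2\phi(\sigma))/\sqrt{1+\sigma^2}$ from Lemma \ref{phi} makes it strictly negative.

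The main technical point is to show that the coefficient multiplying $k_2$ is strictly positive. I would apply the mean value theorem to write $\tfrac{\phi(\sigma_2)-\phi(\sigma_1)}{\sigma_2^2-\sigma_1^2} = \tfrac{\phi'(c)}{\sigma_1+\sigma_2}$ for some $c \in (\sigma_1,\sigma_2)$, so that the coefficient rewrites as $\phi'(\sigma_1) - \tfrac{2\sigma_1}{\sigma_1+\sigma_2}\phi'(c)$. Since $\phi$ is concave by Lemma \ref{phi}, $\phi'(c) < \phi'(\sigma_1)$, and since $\tfrac{2\sigma_1}{\sigma_1+\sigma_2} < 1$, the coefficient is strictly positive. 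The jump relation at $\sigma_1$ then forces $k_1$ and $k_2$ to have opposite signs, contradicting $k_1,k_2>0$; the jump relation at $\sigma_2$ is not even needed to close the argument.

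The step I expect to demand the most care is the mean-value-plus-concavity estimate, because a naive termwise bound is not sharp enough to pin down the sign of the $k_2$-coefficient uniformly in $0<\sigma_1<\sigma_2$. Once this sign analysis is in place, the same scheme — Rankine--Hugoniot at each interior jump combined with the MVT estimate on the slope of $\phi$ — iterates to rule out weak solutions with any finite number of discontinuities, so the nonexistence conclusion of Theorem \ref{theorem2} extends beyond the two-point setting.
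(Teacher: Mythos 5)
Your proposal is correct and follows essentially the same route as the paper: establish $k_1,k_2,k_3>0$ from positivity of $\theta^2$ (with Lemma \ref{lemma1} for the middle piece), then show via the mean value theorem and the concavity of $\phi$ that the Rankine--Hugoniot relation $\llbracket\theta\theta'\rrbracket=0$ at $\sigma_1$ forces $k_1$ and $k_2$ to have opposite signs, a contradiction. The only cosmetic difference is that the paper also records the analogous contradiction at $\sigma_2$, whereas you correctly observe that the relation at $\sigma_1$ alone closes the argument.
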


\begin{proof}
Suppose there exists a weak solution $\theta$ given by \eqref{disc-sol3} that satisfies jump conditions \eqref{jump} on both discontinuity points $\sigma_1$ and $\sigma_2$. Consider first the condition \eqref{jump-th} applied to \eqref{disc-sol3} at $\xi=\sigma_1$. That is,
\[\theta_1(\sigma_1)\theta'_1(\sigma_1) =  \theta_2(\sigma_1) \theta'_2(\sigma_1).\]
This yields a constrain on constants $k_1,k_2$
\begin{align}
\label{eq3}
\frac{k_1}{k_2} &= -\sqrt{1+\sigma_1^2} \bigg(1 - 2\frac{\phi(\sigma_2)-\phi(\sigma_1)}{\sigma_2^2 - \sigma_1^2} \frac{\sigma_1}{\phi'(\sigma_1)} \bigg) \phi'(\sigma_1).
\end{align}
Note that $\phi'(\sigma_1) - 2\frac{\phi(\sigma_1)}{\sigma_1}=-\frac{1}{\sqrt{1+\sigma_1^2}}$.

Since $k_1>0$ and $k_2>0$, the relation \eqref{eq3} holds iff 
\begin{equation}
\label{eq8}
1 - 2\frac{\phi(\sigma_2)-\phi(\sigma_1)}{\sigma_2^2 - \sigma_1^2} \frac{\sigma_1}{\phi'(\sigma_1)} <0.
\end{equation}
Using the mean value theorem, one may rewrite \eqref{eq8} as
\begin{align}
\label{eq17}
0> 1 - 2\frac{\phi(\sigma_2)-\phi(\sigma_1)}{\sigma_2^2 - \sigma_1^2} \frac{\sigma_1}{\phi'(\sigma_1)} = 1 - \frac{2 \sigma_1}{\sigma_2 + \sigma_1} \frac{\phi(\sigma_2)-\phi(\sigma_1)}{\sigma_2 - \sigma_1} \frac{1}{\phi'(\sigma_1)}= 1 - \frac{2 \sigma_1}{\sigma_2 + \sigma_1} \frac{\phi'(c)}{\phi'(\sigma_1)}
\end{align}
for $c \in (\sigma_1,\sigma_2)$. Recall $\phi(\xi)$ is concave, it implies that $\phi'(\sigma_1)>\phi'(c)$ and
\[1 - \frac{2 \sigma_1}{\sigma_2 + \sigma_1} \frac{\phi'(c)}{\phi'(\sigma_1)} > \frac{\sigma_2-\sigma_1}{\sigma_2 + \sigma_1}>0,\]
which contradicts \eqref{eq17}. Hence, the jump condition \eqref{jump-th} is not satisfied at $\xi=\sigma_1$.

Next, we consider the condition \eqref{jump-th} applied to \eqref{disc-sol3} at $\xi=\sigma_2$. That is,
\[\theta_3(\sigma_2)\theta'_3(\sigma_2) =  \theta_2(\sigma_2) \theta'_2(\sigma_2).\]
This provides a restriction on constants $k_2,k_3$
\begin{align}
\label{eq4}
\frac{k_3}{k_2} &= 1 - 2\frac{\phi(\sigma_2)-\phi(\sigma_1)}{\sigma_2^2 - \sigma_1^2} \frac{\sigma_2}{\phi'(\sigma_2)}.
\end{align}
Since $k_2>0$ and $k_3>0$, the relation \eqref{eq4} holds iff 
\begin{equation*}
1 - 2\frac{\phi(\sigma_2)-\phi(\sigma_1)}{\sigma_2^2 - \sigma_1^2} \frac{\sigma_2}{\phi'(\sigma_2)}>0.
\end{equation*}
From the mean value theorem, we obtain
\begin{align*}
0< 1 - 2\frac{\phi(\sigma_2)-\phi(\sigma_1)}{\sigma_2^2 - \sigma_1^2} \frac{\sigma_2}{\phi'(\sigma_2)} = 1 - \frac{2 \sigma_2}{\sigma_2 + \sigma_1} \frac{\phi'(c)}{\phi'(\sigma_2)},
\end{align*}
for some $c \in (\sigma_1,\sigma_2)$. The concavity of $\phi$ leads again to a contradiction. Therefore, there does not exist a discontinuous solution $(\theta, V, P)$ with discontinuities at two points that satisfies the jump conditions \eqref{jump}.
\end{proof}
Recall theorem \ref{theorem1} and theorem \ref{theorem2}, we observe that jump condition \eqref{jump-th} is not satisfied neither near the boundary nor across the vortex line, despite the existence of an intermediate region. Therefore, a similar outcome will also be obtained if a finite number of intermediate layers are considered. This leads to the following corollary. 
\begin{corollary}
Let $(\theta, V, P)$ be a weak solution of \eqref{theta-eq} - \eqref{p-eq} of class $\theta\in W^{1,1}((0,\infty))$, $V \in BV((0,\infty)) \cap L^{\infty}((0,\infty))$ and $P \in BV((0,\infty)) \cap L^{\infty}((0,\infty))$ which satisfies the boundary conditions 
\begin{equation*}
V(0+)=V_0, \quad P(0+) = E_0, \quad \theta(0) = 0, \quad \theta'(\infty)=0.
\end{equation*}
There does not exist a solution $(\theta, V, P)$ with discontinuities at a finite number of points that fulfils the jump conditions \eqref{jump}. 
\end{corollary}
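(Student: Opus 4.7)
The strategy is to reduce the corollary to the single-interface contradiction already exploited in Theorem \ref{theorem2}: the sign argument at \eqref{eq17} only involves the ratio of constants across one interface, so provided every intermediate integration constant is positive, the same calculation applied at $\sigma_1$ closes the case regardless of how many further discontinuities appear to the right.

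Concretely, suppose $(\theta, V, P)$ is a weak solution with jump points $0 < \sigma_1 < \cdots < \sigma_n < \infty$, and set $\sigma_0 := 0$, $\sigma_{n+1} := \infty$. The first step is to solve the system on each subinterval $(\sigma_j, \sigma_{j+1})$ as in Section \ref{sec:euler}: $V$ must be piecewise constant on these intervals and $\theta$ is forced by \eqref{w4} to vanish at every internal jump, so the derivations leading to \eqref{th-disc2} and to the middle piece of \eqref{disc-sol3} apply verbatim. The outer intervals inherit the boundary data $\theta(0)=0$ and $\theta'(\infty)=0$, while for each intermediate index $1 \le j \le n-1$ one obtains
\begin{equation*}
\frac{\theta_j^2}{2}(\xi) = k_j\Bigl[\phi(\xi) - \phi(\sigma_j) - \frac{\phi(\sigma_{j+1})-\phi(\sigma_j)}{\sigma_{j+1}^2 - \sigma_j^2}\,(\xi^2 - \sigma_j^2)\Bigr], \qquad \xi \in (\sigma_j, \sigma_{j+1}).
\end{equation*}

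The second step is to verify that every $k_j$ is strictly positive. For $j = n$ this follows from $\phi(\xi) > \phi(\sigma_n)$ on $(\sigma_n,\infty)$; for $j = 0$ it is exactly the argument at \eqref{eq-J}; and for each intermediate $j$ it is the statement of Lemma \ref{lemma1} with $(\sigma_1, \sigma_2)$ replaced by $(\sigma_j, \sigma_{j+1})$, whose proof rests solely on concavity of $\phi$ and transfers without modification. With all $k_j > 0$ in hand, I would apply the jump condition $\llbracket \theta \theta' \rrbracket = 0$ at $\xi = \sigma_1$: computing the two one-sided limits from the explicit formulas reproduces precisely the relation \eqref{eq3} with $(k_-, k_+)$ replaced by $(k_0, k_1)$, and the mean-value-plus-concavity argument of \eqref{eq17} then shows the parenthetical factor is strictly positive, contradicting $k_0, k_1 > 0$.

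The only genuinely new input I anticipate needing is the extension of Lemma \ref{lemma1} to every intermediate strip, which is a one-line reduction to concavity of $\phi$. After that, the contradiction is entirely local at the first interface $\sigma_1$ and is insensitive to the number or positions of the further discontinuities; equivalently one may localize the same argument at $\sigma_n$ using \eqref{eq4}, matching the author's remark that the obstruction arises both near the boundary and near the vortex line.
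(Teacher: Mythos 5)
Your proposal is correct and is essentially the argument the paper intends: the paper justifies the corollary by remarking that the contradiction of Theorems \ref{theorem1} and \ref{theorem2} already occurs at the interface nearest the boundary (and at the one nearest the vortex line), so it persists no matter how many intermediate layers are inserted, and you have simply made this explicit by extending Lemma \ref{lemma1} to each intermediate strip to secure $k_j>0$ and then running the relation \eqref{eq3}/\eqref{eq17} at $\sigma_1$. The details check out, including the observation that only the positivity of the two constants adjacent to $\sigma_1$ is actually needed for the local contradiction.
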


\subsection{Interaction of vortex with boundary}
Motivated by the study of Euler equations presented in the previous sections, we are interested in extending it to a class of flows where there is mass input or loss through the vortex line. In other words, we assume that the vortex line can be either a source or a sink of the fluid motion. Since that the vortex line resembles the tornado core, this assumption is not unnatural for a tornado-like flow. In terms of the self-similar functions \eqref{ansatz}, the assumption leads to
\begin{equation*}
U(\xi) \to U_\infty \quad \textrm{as} \quad \xi \to \infty,
\end{equation*}
or in terms of $\theta$
\begin{equation}
\label{bc-new}
\theta'(\xi) \to -U_\infty \quad \textrm{as} \quad \xi \to \infty,
\end{equation}
where $U_\infty$ is a non-zero constant. 

\subsubsection{Continuous solutions}
Consider first the case where solutions are continuous. Same calculations as those described in Section \ref{sec:euler} lead again to solutions in form \eqref{euler-sol}. The boundary condition \eqref{bc-new} on the vortex line applied to \eqref{eqn11} yields
\begin{equation}
\label{bc-new1}
\frac{1}{2} \,  U_\infty^2 = k_0 - \frac{V_0^2}{2}- 2A_0 + E_0.
\end{equation}
To derive this, note that \eqref{eqn11} implies
\begin{align*}
\theta' = \frac{1}{\theta} \bigg[k_0 \phi'(\xi) + 2\bigg(k_0 - \frac{V_0^2}{2}- 2A_0 + E_0\bigg) \xi\bigg]
\xrightarrow []{\xi\to\infty} {\pm\sqrt{2\bigg(k_0 - \frac{V_0^2}{2}- 2A_0 + E_0\bigg)}}.
\end{align*}
Thus, as $\xi\to \infty$
\[\theta'(\infty) =  -U_\infty =  {\pm\sqrt{2\bigg(k_0 - \frac{V_0^2}{2}- 2A_0 + E_0\bigg)}}.\]
In addition, we impose a no-penetration boundary condition at $\xi=0$, i.e. $\theta(0)=0$. Applying this to \eqref{eqn11} provides
\begin{align}
\label{bc-new2}
A_0 = E_0.
\end{align}
As a result the relation \eqref{bc-new1} reduces to
\begin{equation}
\label{bc-new3}
\frac{1}{2} U_\infty^2 = k_0 - \frac{V_0^2}{2}- E_0.
\end{equation}
Substituting \eqref{bc-new2} and \eqref{bc-new3} into \eqref{euler-sol}, we obtain an explicit family solutions of $\eqref{theta-eq} - \eqref{p-eq}$ depending on parameters $U_\infty, V_0$ and $E_0$
\begin{subequations}
\begin{align}
&\frac{\theta^2}{2} = k_0 \phi(\xi) + \frac{1}{2} U_\infty^2  \xi^2, \qquad
U = - \frac{1}{\theta} \bigg[\frac{k_0} {\sqrt{1+\xi^2}} \ \big(1-2\phi(\xi)\big) +  U_\infty^2 \xi \bigg], \\
&V = V_{0}, \qquad
W = \frac{k_0}{\theta} \frac{\xi} {\sqrt{1+\xi^2}}, \qquad
P = E_0 - k_0 \frac{\xi} {\sqrt{1+\xi^2}},
\end{align} 
\end{subequations} 
where \[k_0= \frac{1}{2} U_\infty^2 + \frac{V_0^2}{2}+E_0.\]
We next investigate the existence of discontinuous solutions in this framework.

\subsubsection{Nonexistence of Discontinuous Solutions}
Let $(\theta,V,P)$ be a weak solution of \eqref{theta-eq} - \eqref{u-eq} with a discontinuity at a fixed point $\xi = \sigma$, $\sigma \in (0,\infty)$, which satisfies the boundary condition \eqref{bc-new}. This suggests seeking solutions in the form
\begin{subequations}
\begin{align*}
\theta (\xi) =
\left\{
\begin{aligned}
& \theta_{-} (\xi), \\
&\theta_{+} (\xi)
\end{aligned}
\right. 
\quad \text{and} \quad
V (\xi) =
\left\{
\begin{aligned}
& V_- (\xi) \,\, , \quad &&\xi \in (0,\sigma) \\
& V_+ (\xi)\,\, , \quad &&\xi \in (\sigma,\infty)
\end{aligned}
\right.
\end{align*}
\end{subequations}
Repeating calculations described in Section \ref{sec:euler}, we obtain again solutions in form \eqref{eqn11} on $(0,\sigma)$ and $(\sigma,\infty)$ respectively. The system \eqref{theta-eq} - \eqref{p-eq} is solved on each domain independently. Note that $V(\xi)$ is again a constant function with $V_-$ and $V_+$ constants. Recall that discontinuous solutions occur when ${\theta}(\sigma)=0$, the relation for $\theta$ becomes
\begin{equation}
\label{th-disc3}
\frac{\theta_\pm^2}{2} = {k_\pm} \bigg(\phi(\xi) - \phi(\sigma)\bigg) + \Big({k_\pm}-\frac{V_\pm^2}{2} - 2{A_\pm} + {E_\pm}\Big) \big(\xi^2 - \sigma^2\big).
\end{equation} 
where $k_\pm, A_\pm$ and $E_\pm$ are integration constants. 

We impose a no-penetration boundary condition at the boundary and the condition \eqref{bc-new} at the vortex line, namely
\[\theta_-(0) = 0, \quad \textrm{and} \quad \theta'_+(\xi) \to -U_\infty \quad \textrm{as} \quad \xi \to \infty,\]
which imply
\begin{align}
\label{bc1}
{k_-}-\frac{V_-^2}{2} - 2{A_-} + {E_-} &= - {k_-} \frac{1}{\sigma^2} \\
\label{bc2}
k_+ - \frac{V_+^2}{2}- 2A_+ + E_+ &= \frac{1}{2}  U_\infty^2
\end{align}
To derive \eqref{bc2}, note that \eqref{th-disc3} provides
\[
\theta'_+ \xrightarrow []{\xi\to\infty} {\pm\sqrt{2\bigg(k_+ - \frac{V_+^2}{2}- 2A_+ + E_+\bigg)}}, \]
and thus,
\[\theta'_+(\infty) = - U_\infty =  \pm\sqrt{2\bigg(k_+ - \frac{V_+^2}{2}- 2A_+ + E_+\bigg)}. \]
Substituting \eqref{bc1} and \eqref{bc2} into \eqref{th-disc3}, we obtain the explicit formula 
\begin{align}
\label{disc-sol2}
\frac{\theta^2}{2} =
\left\{
\begin{aligned}
&{k_-} \Bigg[\phi(\xi) - \phi(\sigma)  -  \frac{\xi^2-\sigma^2}{\sigma^2}\phi(\sigma)\Bigg] \,\, , \quad &&\xi \in (0,\sigma) \\
&{k_+} \bigg[ \phi(\xi) - \phi(\sigma) \bigg] + \frac{1}{2}  U_\infty^2 (\xi^2-\sigma^2) \,\, , \quad &&\xi \in (\sigma,\infty)
\end{aligned}
\right.
\end{align}
where $k_+, k_-$  are constants. From \eqref{euler-sol} and \eqref{disc-sol2}, one may derive the explicit formulas of discontinuous solutions $U, W$ and $P$. 

Examining now whether discontinuous solutions in the form \eqref{disc-sol2} are feasible leads to the following theorem.

\begin{theorem}
Let $(\theta, V, P)$ be a weak solution of \eqref{theta-eq} - \eqref{p-eq} of class $\theta\in W^{1,1}((0,\infty))$, $V \in BV((0,\infty)) \cap L^{\infty}((0,\infty))$ and $P \in BV((0,\infty)) \cap L^{\infty}((0,\infty))$ which satisfies the boundary conditions 
\begin{equation*}
V(0+)=V_0, \quad P(0+) = E_0, \quad \theta(0) = 0, \quad \theta'(\infty)=U_\infty.
\end{equation*}
There does not exist a solution $(\theta, V, P)$ with a discontinuity at a single point that fulfils jump conditions \eqref{jump}. 
\end{theorem}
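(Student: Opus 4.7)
The plan is to adapt the proof of Theorem \ref{theorem1} to the new boundary data by exploiting the explicit two-branch representation of $\theta^2$ in \eqref{disc-sol2} together with the Rankine--Hugoniot condition $\llbracket\theta\theta'\rrbracket=0$ from \eqref{jump-th}. As in Theorem \ref{theorem1}, the obstruction will come from a sign mismatch across $\sigma$; the only real novelty is that the outer branch now carries the extra term $\tfrac12 U_\infty^2(\xi^2-\sigma^2)$ coming from the modified far-field condition.

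First, I would pin down the signs of the free constants $k_\pm$ using that a genuine weak solution requires $\theta^2(\xi)>0$ for $\xi\neq\sigma$. On the inner interval $\xi\in(0,\sigma)$, the formula for $\theta_-^2/2$ is identical to the inner branch of \eqref{th-disc2}, so the strict-concavity argument from Theorem \ref{theorem1} applied to $J(\xi)=\phi(\xi)-\phi(\sigma)-\frac{\xi^2-\sigma^2}{\sigma^2}\phi(\sigma)$ (using $J(0)=J(\sigma)=0$ and $J''=\phi''-\frac{2\phi(\sigma)}{\sigma^2}<0$ by Lemma \ref{phi}) yields $J>0$ on $(0,\sigma)$ and therefore $k_->0$. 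On the outer interval $\xi\in(\sigma,\infty)$ the formula is different: $\frac{\theta_+^2}{2}=k_+(\phi(\xi)-\phi(\sigma))+\tfrac12 U_\infty^2(\xi^2-\sigma^2)$. Since this expression vanishes at $\sigma$ and must remain nonnegative immediately to the right, its one-sided derivative at $\sigma^+$ must satisfy
\begin{equation*}
k_+\phi'(\sigma)+U_\infty^2\sigma \;\geq\;0.
\end{equation*}
This is the new necessary condition that replaces the positivity test used for $k_+$ in Theorem \ref{theorem1}: here $k_+$ is allowed to be negative, but only up to the threshold set by $U_\infty$.

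Second, I would evaluate $\theta\theta'=(\theta^2/2)'$ at $\xi=\sigma$ on each branch using \eqref{disc-sol2}. On the inner side, the identity $\phi'(\sigma)-\frac{2\phi(\sigma)}{\sigma}=-\frac{1}{\sqrt{1+\sigma^2}}$ (a short consequence of Lemma \ref{phi} and the definition of $\phi$) gives $\theta_-\theta_-'(\sigma)=-k_-/\sqrt{1+\sigma^2}$, while on the outer side $\theta_+\theta_+'(\sigma)=k_+\phi'(\sigma)+U_\infty^2\sigma$. The jump condition \eqref{jump-th} then reads
\begin{equation*}
k_+\phi'(\sigma)+U_\infty^2\sigma \;=\;-\frac{k_-}{\sqrt{1+\sigma^2}}.
\end{equation*}
Since $k_->0$ the right-hand side is strictly negative, contradicting the nonnegativity established above. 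This closes the nonexistence argument.

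The main potential obstacle I see is the degenerate limit $k_-=0$, in which $\theta_-\equiv0$ on $(0,\sigma)$ and the left-hand contribution to the jump condition vanishes trivially, making the sign comparison inconclusive. That case must be excluded by hand, but it lies outside the theorem's scope: if $k_-=0$ then $\theta^2$ fails to be strictly positive on $(0,\sigma)$, so there is no bona fide discontinuity at $\sigma$ and the corresponding profile either collapses to the continuous family of Section \ref{sec:euler} or degenerates to $\theta\equiv 0$ on a whole interval, inconsistent with the boundary data $V(0+)=V_0$ through equation \eqref{eqnV}. Beyond this caveat, the argument requires only the elementary properties of $\phi$ collected in Lemma \ref{phi}, and it mirrors Theorem \ref{theorem1} line for line.
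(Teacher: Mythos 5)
Your proposal is correct and follows essentially the same route as the paper: the sign of $k_-$ is fixed by the concavity argument for $J$, the identity $2\phi(\sigma)/\sigma-\phi'(\sigma)=1/\sqrt{1+\sigma^2}$ turns the jump condition $\llbracket\theta\theta'\rrbracket=0$ into $k_+\phi'(\sigma)+U_\infty^2\sigma=-k_-/\sqrt{1+\sigma^2}<0$, and positivity of $\theta_+^2$ to the right of $\sigma$ forces the left-hand side to be $\ge 0$. The only cosmetic difference is that you obtain the one-sided inequality $k_+\phi'(\sigma)+U_\infty^2\sigma\ge 0$ uniformly, whereas the paper splits into the cases $k_+>0$ and $k_+<0$; your handling of the degenerate case $k_-=0$ is a reasonable extra remark that the paper leaves implicit in its standing assumption $\theta^2(\xi)>0$ for $\xi\neq\sigma$.
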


\begin{proof}
Suppose $\theta$ is a weak solution given by \eqref{disc-sol2} that satisfies jump conditions \eqref{jump}. 
Then \eqref{jump-th} applied to \eqref{disc-sol2} provides a constraint between the constants $k_+,k_-$ and the parameter $U_\infty^2$,
\begin{align}
\label{eq2}
-k_+ \phi^\prime (\sigma)  = k_- \Big ( 2 \frac{\phi(\sigma)}{\sigma} - \phi^\prime (\sigma) \Big )  +  U_\infty^2 \sigma  \, .
\end{align}
Using \eqref{defphi} we compute $2 \frac{\phi(\sigma)}{\sigma} - \phi^\prime (\sigma) = \frac{1}{\sqrt{1 + \sigma^2}} > 0$.

Since $\theta^2 (\xi) > 0$ for $\xi \ne \sigma$, relation \eqref{disc-sol2} imposes restrictions on the signs of $k_+,k_-$. 
First, observe that $\frac{\theta_-^2}{2}(\xi) = k_- \,J(\xi)$ where $J(\xi)$ is given by 
\eqref{eq-J}. Since $J(\xi)>0$ for $\xi\in(0,\sigma)$, it implies that $k_->0$.

If it were $k_+ > 0$ this would contradict \eqref{eq2}.  Consider now the possibility that the constants are  $k_- > 0$, $k_+ < 0$. 
Then \eqref{disc-sol2} dictates that
$$
k_+   \frac{ \phi(\xi) - \phi(\sigma)}{\xi - \sigma}  + \frac{1}{2}  U_\infty^2 (\xi + \sigma) > 0 \quad \mbox{for $\xi > \sigma$}
$$
The latter implies, as $\xi \to \sigma_+$,
$$
k_+ \phi^\prime (\sigma) + U_\infty^2 \sigma \ge 0
$$
and contradicts via \eqref{eq2} that $k_- > 0$. Hence, there does not exist a discontinuous solution $(\theta, V, P)$ that fulfils the jump conditions \eqref{jump}.
\end{proof}


\section{Stationary self-similar axisymmetric Navier-Stokes equations}
\label{sec:NS}
In this section we study self-similar solutions for the  stationary  axisymmetric Navier-Stokes equations \eqref{axi-sys}. 
The {\it ansatz}  \eqref{ansatz} leads to the system \eqref{ssform} in the variables $(\theta, V, P)$, where the velocities $(U, W)$ are determined 
from the stream function $\theta$ via \eqref{stream}. 
We  first provide a convenient reformulation of \eqref{ssform} as a coupled integrodifferential system and 
then study the limiting behavior as the viscosity $\nu \to 0$, the form of the boundary layer,  and identify conditions on admissible solutions of the Euler equations.

\subsection{Formulation via an integro-differential system}
The system \eqref{ssform}  for $\nu > 0$ gives after an integration 
\begin{subequations}
\begin{align}
\label{n1}
\frac{\theta^2}{2} + (1 + \xi^2) P &= \nu \Big [ \xi \theta - (1 + \xi^2) \theta^\prime \Big ] - \int_{0}^{\xi} s V^2 \,ds + A_0
\\
\label{n3}
\theta^2  - \xi \Big(\frac{\theta^2}{2}\Big)'  + P  &= \nu \Big[\xi\theta - \xi^2 \theta' - \xi (1+\xi^2) \theta'' \Big]  + {E_0}  ,
\end{align}
\end{subequations}
where $A_0$, $E_0$ are  integration constants. 

We impose a no-slip boundary condition $\vec{u} = 0$ at the boundary $z=0$,  which implies 
$u = v = w = 0$ at $z = 0$. Expressed in terms of self-similar functions it yields
\begin{equation}
\label{bd1}
U(0) = V(0) = W(0) = 0,   \quad \textrm{and thus} \quad \theta(0) = \theta'(0) = 0.
\end{equation}
We also require that no mass is added or lost through the vortex line, that is $u (2 \pi r) \to 0$ as $r \to 0$.  In the self-similar setting
the condition at the vortex core becomes
\begin{equation}
\label{bd2}
U = - {\theta}' \to 0 \quad \mbox{ as \; $\xi \to \infty$}
\end{equation}
Since \eqref{n2} is second order, an additional  condition is required for $V(\xi)$. We request the vortex line to have constant swirl:
\begin{equation}
\label{bd3}
V \to V_\infty, \,\, \textrm{as} \quad \xi \to \infty.
\end{equation}

Then  \eqref{bd1} together with \eqref{n1} and \eqref{n3} imply that $E_0 = A_0 =  P(0)$.
Eliminating $P$ from \eqref{n1} and \eqref{n3} gives 
\begin{equation}
\begin{aligned}
\xi (1+\xi^2) \Big(\frac{\theta^2}{2}\Big)' - (1+2\xi^2) \frac{\theta^2}{2}& + \nu \bigg[\xi^3 \theta + (1-\xi^4) \theta' - \xi(1+\xi^2)^2 \theta'' \bigg] \nonumber\\
&= - \int_{0}^{\xi} s V^2 \,ds - {E_0} \xi^2
\end{aligned}
\end{equation}
Upon dividing by $\xi^2 (1+\xi^2)^{\frac{3}{2}}$ and using the calculus identities
\begin{align}
\label{calc1}
&\frac{d}{d\xi} \frac{1}{\xi (1+\xi^2)^{\frac{1}{2}} } = - \frac{1+2\xi^2}{\xi^2 (1+\xi^2)^{\frac{3}{2}}} \qquad \qquad
&&\frac{d}{d\xi} \frac{ (1+\xi^2)^{\frac{1}{2}} }{\xi} =  -  \frac{1}{ \xi^2 (1+\xi^2)^{\frac{1}{2}}} 
\\
\label{calc2}
&\frac{d}{d\xi} \frac{1}{ (1+\xi^2)^{\frac{1}{2}} } = - \frac{\xi}{ (1+\xi^2)^{\frac{3}{2}}} \qquad  \qquad  
&&\frac{d}{d\xi} \Big ( \frac{1}{\xi \sqrt{1+ \xi^2} \big (\xi + \sqrt{1+\xi^2} \big )^2 } \Big )  = - \frac{1}{ \xi^2 (1 + \xi^2)^{\frac{3}{2}}}
\end{align}
we obtain after some rearrangements the equation
\begin{equation}\label{diffeq10}
\frac{d}{d\xi} \left ( \frac{1}{\xi\sqrt{1+\xi^2}} \frac{\theta^2}{2} - \nu \Big[\frac{1}{\sqrt{1+\xi^2}} \theta + \frac {\sqrt{1+\xi^2}} {\xi} \theta' \Big] \right ) 
=  - \frac{1}{\xi^2 (1+\xi^2)^{\frac{3}{2}}}  \int_{0}^{\xi}  s V^2 \,ds -  \frac{d}{d\xi} \Big ( \frac{E_0 \xi }{ (1+\xi^2)^{\frac{1}{2}}} \Big )  \, .
\end{equation}

Using the boundary condition \eqref{bd2}, $\theta^\prime (\infty) = 0$, which also implies that $\lim_{\xi \to \infty} \frac{\theta(\xi)}{\xi} = 0$, we 
integrate \eqref{diffeq10}
over the interval $(\xi, \infty)$ to obtain
$$
\frac{\theta^2}{2} - \nu \Big ( 1+\xi^2) \theta' + \xi \theta \Big ) =  \xi(1+\xi^2)^{\frac{1}{2}} \int_\xi^\infty \frac{1}{\zeta^2(1+\zeta^2)^\frac{3}{2}} \bigg( \int_0^\zeta s V^2(s) ds\bigg) d\zeta + \,\, {E_0} \Big ( \xi (1+\xi^2)^{\frac{1}{2}} - \xi^2 \Big ) \, .
$$
The latter is an integrodifferential equation depending on $\mathcal{G} (V ; \xi)$  a functional on $V$,
\begin{equation}
\label{defG}
\begin{aligned}
 \mathcal{G}\left(V ; \xi\right)   &:= \xi \sqrt{1+\xi^2} \int_\xi^\infty \frac{1}{\zeta^2(1+\zeta^2)^\frac{3}{2}} \bigg( \int_0^\zeta s V^2(s) ds\bigg) d\zeta 
 \\
 &= \xi \sqrt{1+\xi^2} \int_\xi^\infty  \frac{1}{s \sqrt{1+s^2} \big (s + \sqrt{1+s^2} \big )^2 } s V^2 ds + \frac{1}{ (\xi + \sqrt{1 + \xi^2} )^2} \int_0^\xi s V^2 ds \, ,
\end{aligned}
\end{equation}
where the last equality follows  via integration by parts using \eqref{calc2}.

In summary,  the system \eqref{ssform} with boundary conditions \eqref{bd1}-\eqref{bd3} is transformed to a coupled 
integrodifferential system for $(\theta, V)$,
\begin{subequations}
\label{eq-ns-v1}
\begin{align}
\label{sys1}
\frac{\theta^2}{2} - \nu \bigg[(1+\xi^2) \theta' + \xi \theta \bigg]  &= \mathcal{G}\left( V ; \xi\right)  + E_0 \phi (\xi) \\
\label{sys2}
\nu (1+\xi^2) V'' + \Big(3\nu\xi - \theta\Big) V' &= 0 \, ,
\end{align}
\end{subequations}
where $\mathcal{G} (V ; \xi)$ is the functional in \eqref{defG}
and $\phi(\xi)$ the function in \eqref{defphi} that  played a prominent role
in the Euler equations.
The system is supplemented with the boundary conditions
\begin{subequations}
\label{bc_xi}
\begin{align}
\theta (0) =  V (0) = 0 , \,\, &\textrm{ at } \quad \xi = 0 \\
V \to V_\infty , \,\, &\textrm{ as} \quad \xi \to \infty
\end{align}
\end{subequations}
and satisfies $\theta^\prime(0) = \theta^\prime(\infty) = 0$.
The parameter $E_0= P(0)$ reflects on the pressure $p = \frac{1}{r^2} P(\xi)$ and $P$ 
may be computed by \eqref{n3}.

\subsection{Alternative equivalent formulations}
\label{section:equiv-form}
Next, we derive two more equivalent formulations of the system \eqref{eq-ns-v1} with simpler structure. This will benefit the analysis and provides a different perspective on the problem. 

First we set
\begin{equation}\label{firsttrans}
\hat{\theta} (\xi) = \sqrt{1+\xi^2} \, \theta(\xi) \quad \textrm{and} \quad	\hat{V} (\xi) =V(\xi), \quad \xi\in[0,\infty),
\end{equation}
and obtain
\begin{subequations}
\label{eq-ns-v2}
\begin{align}
\frac{\hat{\theta}^2}{2} - \nu (1+\xi^2)^\frac{3}{2} \, \hat{\theta}'  &= (1+\xi^2) \Big (  \mathcal{G} \big (\hat{V}(\xi) ; \xi \big) + E_0 \phi (\xi) \Big ) , \\
\nu\Big ( (1+\xi^2)^\frac{3}{2} \hat{V}'\Big )^\prime &=  \hat{\theta} \hat{V}',
\end{align}
\end{subequations}
with boundary conditions \eqref{bc_xi}.

An alternative form of \eqref{eq-ns-v1} is derived by introducing a change of variable $\xi \to x$ such that $\frac{d}{dx} = (1 + \xi^2)^{\frac{3}{2}} \frac{d}{d\xi}$.
This is achieved by defining $x$ by
\begin{equation}
\label{def_x}
x = \frac{\xi}{\sqrt{1+ \xi^2}} \quad\textrm{with inverse thansformation} \quad \xi = \frac{x}{\sqrt{1- x^2}} \, .
\end{equation}
The change of variables  $\xi \to x$ is a surjective map $T: [0, \infty) \to [0, 1)$
and has an interpretation as a change from cylindrical $(r,\vartheta,z)$ to spherical $(\rho,\vartheta,\varphi)$ coordinates, 
see Figure \ref{coord-sys}. Since $\xi = \frac{z}{r}$, it follows $\xi = \frac{\cos \varphi}{\sin \varphi} = \cot \varphi$
and setting $x = \cos\varphi$ yields
\begin{equation}
\xi = \frac{\cos \varphi}{\sin \varphi} = \frac{x}{\sqrt{1- x^2}}.
\end{equation}
The  functions $\hat{\theta}(\xi)$ and $\hat{V}(\xi)$ are expressed in terms of the variable $x$ as follows
\begin{equation}\label{defvar}
\Theta (x)  =  \hat{\theta}(\xi)  \quad \textrm{and} \quad V (x) = \hat{V}(\xi),
\end{equation}
where $\xi, \, x$ are related via \eqref{def_x}. 
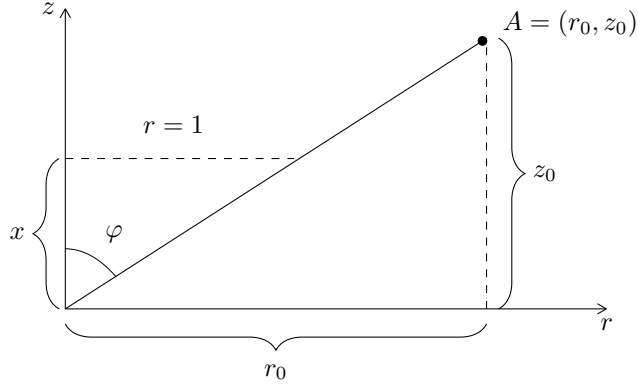
\begin{figure}
\centering
\begin{tikzpicture}[xscale=8, yscale=20]
\draw [<-] (0,0.2) -- (0,0) ; 
\draw [->]  (0,0) -- (0.9,0.0); 
\draw [dashed]  (0.7,0) -- (0.7,0.18); 
\draw [decorate, decoration = {brace, mirror,amplitude=10pt}] (0.72,0) --  (0.72,0.18);
\node [right] at (0.76,0.09) {$z_0$}; 
\node [left] at (0,0.2) {$z$}; 
\node [below] at (0.9,0) {$r$};
\draw [- {Circle}](0,0) -- (0.7,0.18); 
\node [right] at (0.71,0.19) {$A=(r_0,z_0)$}; 
\draw [very thin]  (0.0,0.04) arc (90:65:0.2);
\node [right] at (0.05,0.05) {$\varphi$}; 
\draw [decorate, decoration = {brace, mirror,amplitude=10pt}] (0,-0.01) --  (0.7,-0.01);
\node [below] at (0.35,-0.03) {$r_0$};
\draw [decorate, decoration = {brace,amplitude=10pt}] (-0.01,0) --  (-0.01,0.1);
\node [left] at (-0.05,0.05) {$x$}; 
\draw [dashed]  (0,0.1) -- (0.38,0.1); \node [above] at (0.18,0.11) {$r=1$};
\end{tikzpicture}
\caption{Coordinate System}
\label{coord-sys}
\end{figure}

Using the transformation \eqref{def_x}, \eqref{defvar},   the system \eqref{eq-ns-v1} reduces to the equivalent form
\begin{subequations}
\label{eq-ns-v3}
\begin{align}
\label{eq-ns-v3-1}
\nu \, \frac{d \Theta}{dx}  &=  \frac{\Theta^2 (x)}{2} - \mathcal{F}\left(V ; x\right), \\
\label{eq-ns-v3-2}
\nu \frac{d^2 {V}}{d^2x} &= \Theta(x) \, \frac{d {V}}{dx} ,
\\
\label{bd4_new}
&\qquad \Theta(0) = {V}(0) = 0 , \\
\label{bd3_new}
&\qquad {V} \to V_\infty, \,\,\textrm{ as} \,\ x \to 1,
\end{align}
\end{subequations}
where $\mathcal{F}$ is 
\begin{align}
\mathcal{F}\left(V ; x\right) &:= 
(1+\xi^2) \Big (  \mathcal{G} \big (\hat{V}(\xi) ; \xi \big) + E_0 \phi (\xi) \Big ) \Bigg |_{\xi = \frac{x}{\sqrt{1-x^2}}}
\nonumber 
\\
&=
\frac{x}{(1-x^2)^2}\int_x^1 \frac{1-t^2}{t^2} \bigg(\displaystyle \int_0^\frac{t}{\sqrt{1-t^2}} s \hat{V}^2(s) ds  \bigg) \, dt 
+ \, {E_0}  \,\, \frac{x - x^2}{(1-x^2)^2}
\label{defF}
\\
&= \frac{x}{(1-x^2)^2}\Bigg[\int_x^1 \frac{1-t^2}{t^2} \bigg(\displaystyle \int_0^t \frac{\sigma}{(1-\sigma^2)^2} V^2(\sigma) d\sigma \bigg) \, dt + \, {E_0} \,\, (1 - x)\Bigg]
\nonumber 
\\
&= 
\frac{1}{(1-x^2)^2}\Bigg[    x  \int_x^1  \frac{1}{ (t+1)^2} V^2 (t) dt  +  (1-x)^2 \int_0^x \frac{t}{(1 - t^2)^2} V^2 (t) dt   + \, {E_0} \, x (1 - x)\Bigg]
\nonumber
\end{align}
The first expression  in \eqref{defF} is the definition of $\mathcal{F}\left(V ; x\right)$ and the remaining equalities are obtained 
using \eqref{defphi}, \eqref{defG}, the change of variables $\xi = \frac{x}{\sqrt{1-x^2}}$ and the 
formula $V(x) = \hat V \Big ( \frac{x}{ \sqrt{1-x^2}} \Big )$ connecting $V(x)$ defined on $[0,1)$ with $\hat V(\xi)$ defined on $[0,\infty)$.

Variants of \eqref{eq-ns-v3} appear in \cite{Gold60,Serrin,SH1999,FFA00,BDSS2014} originating from different authors 
who initiate their studies by considering either spherical or cylindrical coordinates.
Serrin \cite{Serrin} provided a detailed analysis of existence of solutions for \eqref{eq-ns-v3} 
and a (indicative)  bifurcation diagram for solutions.  We refer to Section \ref{sec:numerics} for numerical computations leading
to a computational bifurcation diagram, see Figure \ref{bfd1}.
As $\Theta$ satisfies a Ricatti-type equation, there are regions of $\nu$ for which $\Theta (x)$ blows up for 
$x \in [0,1)$ leading to non-existence of solutions for such $\nu$. This is explained in the following section.

\subsection{Properties and a-priori estimates}
\label{sec:existence}


In the sequel we are interested in the limit $\nu \to 0$ and the convergence of  \eqref{eq-ns-v3} to solutions
of the Euler equations.
To fix ideas we restrict to the case $V_\infty  > 0$.  Then  \eqref{eq-ns-v3-2}, \eqref{bd3_new}  imply that $V(x)$ is strictly increasing
and yields the representation formula
\begin{equation}
\label{V-sol}
V(x) =    \displaystyle  {V_\infty \, \frac{  \int_0^x e^{ \frac{1}{\nu}  \int_0^t \Theta(s) ds} dt} { \int_0^1 e^{ \frac{1}{\nu}  \int_0^t \Theta(s) ds} dt}  } 
\end{equation}
leading to the following lemma.
\begin{lemma}
\label{lemma-V}
When  $V_\infty>0$ the function $V(x)$ is strictly increasing and satisfies $0 < V(x) < V_\infty$ independently of $\nu > 0$.
\end{lemma}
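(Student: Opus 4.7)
The plan is to read off both assertions directly from the representation formula \eqref{V-sol}, which is already displayed in the excerpt. To justify that formula itself, one interprets the linear equation \eqref{eq-ns-v3-2}, $\nu V'' = \Theta V'$, as a first-order ODE for $W := V'$. Separation of variables gives $W(x) = W(0)\, \exp\!\bigl(\tfrac{1}{\nu}\int_0^x \Theta(s)\,ds\bigr)$; integrating $W$ and imposing $V(0)=0$ together with $V(x)\to V_\infty$ as $x\to 1$ pins down $W(0)$ and produces \eqref{V-sol}.

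With the representation in hand, monotonicity is immediate. The exponential integrand is strictly positive on $[0,1]$, hence both integrals appearing in \eqref{V-sol} are strictly positive, and differentiation yields
\[
V'(x) \;=\; \frac{V_\infty\, e^{\frac{1}{\nu}\int_0^x \Theta(s)\,ds}}{\int_0^1 e^{\frac{1}{\nu}\int_0^t \Theta(s)\,ds}\,dt}.
\]
Since $V_\infty>0$ by hypothesis, this is strictly positive throughout $[0,1]$, giving strict monotonicity.

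For the uniform bounds, $V(0)=0$ is built into \eqref{V-sol} because the numerator vanishes at $x=0$, while $V(x)\to V_\infty$ as $x\to 1$ follows because the ratio of integrals tends to $1$. Strict monotonicity on $[0,1]$ with boundary values $0$ and $V_\infty$ then forces $0<V(x)<V_\infty$ on $(0,1)$. These bounds are manifestly independent of $\nu$, since $\nu$ only enters inside the exponent and is absent from the extremal values. No substantive obstacle arises; the statement is essentially a maximum-principle observation for the linear second-order equation \eqref{eq-ns-v3-2}, made quantitative by the explicit representation \eqref{V-sol}.
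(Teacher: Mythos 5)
Your proof is correct and is essentially the paper's own argument: the paper obtains the same representation formula \eqref{V-sol} from \eqref{eq-ns-v3-2} together with the boundary conditions and reads the lemma off from it. You merely supply the routine details (solving the first-order equation for $V'$ by separation of variables and fixing the normalization from the condition at $x=1$) that the paper leaves implicit.
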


We turn next to the equation \eqref{eq-ns-v3-1},
\begin{equation}
\label{eqtheta}
\nu \, \frac{d \Theta}{dx}  =  \frac{\Theta^2 (x)}{2} -  \frac{x}{(1-x^2)^2}\Big(H(x) + \, {E_0} \,\, (1 - x)\Big)
\end{equation}
and proceed to establish properties for $\Theta$.
The functional $H(x)$ in \eqref{defF} is given by 
\begin{equation}
\label{H}
\begin{aligned}
H(x) = \int_x^1 \frac{1-t^2}{t^2} \bigg(\displaystyle \int_0^\frac{t}{\sqrt{1-t^2}} s \hat{V}^2(s) ds  \bigg) \, dt \, ,
\end{aligned}
\end{equation}
here expressed in terms of the velocity $\hat V (\xi)$ defined on $(0,\infty)$.
The derivatives of $H(x)$ are
\begin{align}
\frac{dH}{dx} (x) &=-\frac{1-x^2}{x^2} \int_0^\frac{x}{\sqrt{1-x^2}} s {\hat V}^2(s) ds < 0, \quad x \in [0,1),   
\label{deriv1}
\\
\frac{d^2 H}{dx^2} (x) &= -\frac{2}{x^3}   \int_0^\frac{x}{\sqrt{1-x^2}} s  {\hat V}(s) \frac{d {\hat V}}{d s}(s) ds < 0,  \quad x \in [0,1),
\label{deriv2}
\end{align}
and $H(x)$ satisfies 
\begin{equation}\label{defh0}
 0 <  H(0) =
 \int_0^1 \frac{1-t^2}{t^2} \bigg(\displaystyle \int_0^\frac{t}{\sqrt{1-t^2}} s \hat{V}^2(s) ds  \bigg) \, dt < \frac{V_\infty^2}{2}  \, ,
\end{equation}
$H(1) = 0$,  $\lim_{x\to0} \frac{dH}{dx}= 0$ and  $\lim_{x\to0} \frac{d^2H}{dx^2} = 0$.
Hence, $H(x)$ is decreasing and concave. A sketch of $H(x)$ is presented in Figure \ref{fig:H(x)}.
If we define
\begin{equation}\label{defalpha}
\frac{dH}{dx} (1) = - \lim_{\xi \to \infty} \frac{1}{\xi^2} \int_0^\xi s {\hat V}^2 (s) ds = : - \beta < 0
\end{equation}
we observe that due to the concavity of $H(x)$ we have the bounds
\begin{equation}
H(0) (1 - x) \le H(x) \le \beta (1 -x) \qquad x \in [0,1]
\end{equation}
with $H(0)$, $\beta$ defined in \eqref{defh0}, \eqref{defalpha} and satisfying $H(0) < \beta$.
\begin{figure}[htbp]
\centering
\begin{tikzpicture} 
\draw[->, name path=yaxis] (0,-.5) -- (0,3) node[left] {$H(x)$}; 
\draw[->] (-1,0) -- (6,0) node[below right] {$x$}; 
\path (0,1.25) node[point]{} node[left] {$H(0)$} (4,0) node[point]{} node[below] {$1$}; 
\draw (0,1.25) .. controls (1.5,1) and (2.75,.7) .. (4,0);  
\def\n{3.5}
\path[name path=tangent] (4,0) -- ++({\n*(2.75-4)},{\n*(.7-0)}); 
\draw[name intersections={of=yaxis and tangent}, dashed] (4,0) -- (intersection-1) node[left] {$\beta$}; ; 
\end{tikzpicture}
\caption{$H(x)$ \label{fig:H(x)}}
\end{figure}

As a consequence, the function $F = H + E_0 (1-x)$  in the Ricatti-type equation \eqref{eqtheta} obeys the bounds
\begin{equation} \label{boundH}
(H(0) + E_0) (1-x) \le F(x) = H(x) + E_0 (1-x) \le (\beta + E_0) (1-x)
\end{equation}
and resembles one of the following configurations, see Fig. \ref{fig:F(x)} :
\begin{enumerate}[label=(\alph*)]
\item $F$ in negative on $[ 0,1]$, which occurs when $\beta + E_0 < 0$
\item $F$ is first negative, it becomes zero at a point $0<x_1<1$ and then $F$ is positive. This
occurs when $H(0) + E_0 < 0 < \beta + E_0 $.
\item $F$ in positive for $ [0,1)$, occurring when $0 < H(0) + E_0$.
\end{enumerate}

\begin{figure}[htbp] 
\centering
\tikzset{every picture/.append style={scale=.75}}
\begin{subfigure}{.3\textwidth} 
\centering
\begin{tikzpicture}
\draw[->] (0,-2) -- (0,2) node[left] {$F$}; 
\draw[->] (-1,0) -- (4,0) node[below] {$x$}; 
\path (0,-1.5) node[point]{} node[right] {$F(0)=H(0)+E_0<0$} 
(3,0) node[point]{} node[below] {$1$}; 
\draw plot [smooth, tension=1.] coordinates {(0,-1.5) (1.0,-.5) (3,0)};
\end{tikzpicture}
\caption{ \label{fig:F_a}}
\end{subfigure} \hfill 
\begin{subfigure}{.3\textwidth} 
\centering
\begin{tikzpicture}
\draw[->] (0,-2) -- (0,2) node[left] {$F$}; 
\draw[->, name path=x_axis] (-1,0) -- (4,0) node[below] {$x$}; 
\path (0,-1.5) node[point]{} node[right] {$F(0)<0$} 
(3,0) node[point]{} node[below] {$1$}; 
\draw[name path=F] plot [smooth, tension=1.] coordinates {(0,-1.5) (1.2,.3) (3,0)};
\path[name intersections={of=x_axis and F}] (intersection-1) node[point]{} node[below right] {$x_1$}; 
\end{tikzpicture}
\caption{ \label{fig:F_b}}
\end{subfigure} \hfill
\begin{subfigure}{.35\textwidth}
\centering 
\begin{tikzpicture}
\draw[->] (0,-2) -- (0,2) node[left] {$F$}; 
\draw[->] (-1,0) -- (4,0) node[below] {$x$}; 
\path (0,1.5) node[point]{} node[above right] {$F(0)>0$} 
(3,0) node[point]{} node[below] {$1$}; 
\draw plot [smooth, tension=1.] coordinates {(0,1.5) (1.5,1.2) (3,0)};
\end{tikzpicture}
\caption{ \label{fig:F_c}}
\end{subfigure}
\caption{Possible configuration of function $F$ \label{fig:F(x)}}
\end{figure}
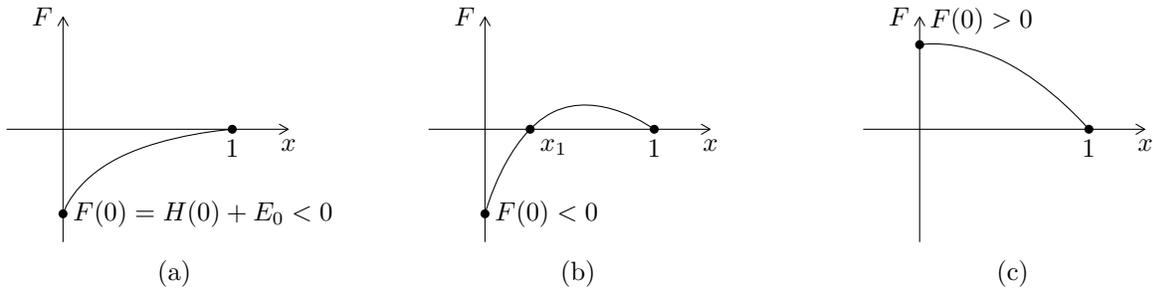

This restricts considerably the possible shapes of $\Theta$. A computation using \eqref{eqtheta} gives
$$
\nu \, \frac{d^2 \Theta}{dx^2} (0) = - (H(0) + E_0 ) = - F(0)  \, .
$$
Since $\Theta(0) = \frac{d\Theta}{dx} (0) =0$ the value of $F(0)$ determines in which half-plane $\Theta (x)$ initially starts.
Consider first the case where $F(0) = H(0) + E_0>0$. Then $F(x)>0$ for all $x\in(0,1)$, that is the setting of case ${(c)}$,  illustrated in Figure \ref{fig:F_c}. 
$\Theta(x)$ starts in the lower half-plane. If it crosses the x-axis at some point $x_1<1$, then
\begin{equation*}
0 \le  \nu \, \frac{d \Theta}{dx}  (x_1)  =  - \frac{x_1}{(1-x_1^2)^2} \, {F}(x_1) 
\end{equation*}
which contradicts that $F(x)>0$. Therefore, when $H(0) + \, {E_0} >0$,  the function $\Theta(x)$ remains negative for all $x\in (0,1)$.



Next, consider the case $H(0) + E_0<0$ which corresponds to the cases ${(a)}$ or ${(b)}$. We now have
\begin{align*}
\nu \, \frac{d^2 \Theta}{dx^2} (0)  &=  - \Big ( H(0) + \, {E_0} \Big ) > 0 \, 
\end{align*}
and $\Theta(x)$ starts at the upper half-plane. If $\Theta$ crosses the x-axis at some point $x_1<1$ then
\begin{equation}
\label{eq5}
0 \ge  \,  \nu \, \frac{d \Theta}{dx}  (x_1)  = -  \frac{x_1}{(1-x_1^2)^2}\, {F}(x_1) 
\end{equation}
It is possible to cross going downwards but it is not possible to cross a second time going upward again.
We conclude that when case $(b)$ happens then $\Theta(x)$ can possibly cross the axis but it cannot cross a second time.
By contrast when case $(a)$ happens, \eqref{eq5} contradicts $F(x)<0$, $x\in (0,1)$, {\it i.e.}  $\Theta(x)$ cannot cross the axis.
In this regime,  the differential inequality
\begin{equation*}
\nu \, \frac{d^2 \Theta}{dx^2} >  \frac{1}{2} \Theta^2, \quad x \in [0,1)
\end{equation*}
implies that the solution $\Theta(x)$ blows up in finite time, and if $\nu$ is sufficiently small, this will happen within
the interval $[0,1]$.

Summarizing there are the following possibilities:    
When $F(x) > 0$  as in Fig. \ref{fig:F_c} then $\Theta (x) < 0$; when $F(x) <  0$ as in Fig. \ref{fig:F_a} then $\Theta (x) > 0$. 
When  $F(x)$ changes sign as in Fig.  \ref{fig:F_b}, then $\Theta$ starts at the upper half plane, and either $\Theta$ stays there afterwards 
or it might cross to the lower half plane.
Accordingly, there are the following configurations:  Either $\Theta (x) < 0$ which is called Zone A;  or it starts with $\Theta (x) >0$ but crosses to 
the negative half-plane and stays there thereafter, called Zone B;  or it starts and stays $\Theta (x) >0$, called Zone C.
In Zone C the solution $\Theta$ lies in the upper half-plane and if $\nu$ is sufficiently small it will blow up before reaching $x=1$. 

\smallskip
A-priori estimates for $\Theta(x)$ are derived below.

\begin{lemma}\label{lem:bound}
(i)  If $E_0>0$, then
\begin{equation*}
{\Theta}(x) < 0, \qquad 0<x<1
\end{equation*}
(ii)  If $\beta + E_0  >0$, with $\beta$ defined in \eqref{defalpha}, then 
\begin{equation*}
{\Theta}(x) > -\sqrt{2(\beta +E_0 )} \,\,\, \frac{\sqrt{x-x^2}}{1-x^2}, \qquad 0<x<1.
\end{equation*}
\end{lemma}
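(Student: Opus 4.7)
Both parts rest on the Riccati-type equation \eqref{eqtheta} combined with the initial data $\Theta(0)=\Theta'(0)=0$, $\nu\Theta''(0) = -F(0)$, where $F(x) = H(x) + E_0(1-x)$, and on the structural properties of $H$ already established: $H$ is positive, strictly decreasing, and concave on $[0,1)$ with $H(1)=0$ and $H'(1)=-\beta$.

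For part (i), when $E_0>0$ the fact that $H\ge 0$ on $[0,1)$ gives $F(x)>0$ throughout, which places us in case~(c) of Figure~\ref{fig:F(x)}. Since $\nu\Theta''(0)=-F(0)<0$, $\Theta$ becomes negative immediately. If $\Theta$ were to return to $0$ at some $x_1\in(0,1)$, the equation would force $\nu\Theta'(x_1) = -x_1 F(x_1)/(1-x_1^2)^2 < 0$, inconsistent with $\Theta$ rising from negative values back up to zero at $x_1$. This is precisely the argument for Zone~A already sketched in the text preceding the lemma.

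For part (ii), my plan is to use a barrier/comparison argument against the explicit function
\[
\tilde{\Theta}(x) \;:=\; -\sqrt{2(\beta+E_0)}\;\frac{\sqrt{x-x^2}}{1-x^2},
\]
designed so that $\tfrac{1}{2}\tilde{\Theta}^2(x) = (\beta+E_0)\,x(1-x)/(1-x^2)^2$. Concavity of $H$ together with $H(1)=0$ and $H'(1)=-\beta$ yields the tangent-line bound $H(x)\le \beta(1-x)$, hence $F(x)\le (\beta+E_0)(1-x)$. Therefore, at any point $x_0$ at which $\Theta(x_0)=\tilde{\Theta}(x_0)$, equation \eqref{eqtheta} yields
\[
\nu\,\Theta'(x_0) \;=\; \tfrac{1}{2}\tilde{\Theta}(x_0)^2 \;-\; \frac{x_0\,F(x_0)}{(1-x_0^2)^2} \;\ge\; 0.
\]

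To close the argument I need two further ingredients. First, that $\tilde{\Theta}$ is strictly decreasing on $(0,1)$: rewriting $\tilde{\Theta}=-\sqrt{2(\beta+E_0)}\,\sqrt{x}/[\sqrt{1-x}\,(1+x)]$ and taking a logarithmic derivative produces the numerator $1-x+2x^2>0$, so $\tilde{\Theta}'<0$. Second, that near $x=0$ Taylor expansion gives $\Theta(x)=O(x^2)$ while $\tilde{\Theta}(x)\sim -\sqrt{2(\beta+E_0)\,x}$, hence $\Theta>\tilde{\Theta}$ on a right neighborhood of $0$. If the claim failed, letting $x_0$ be the infimum of the set $\{x\in(0,1):\Theta(x)\le\tilde{\Theta}(x)\}$ would, by continuity, give $\Theta(x_0)=\tilde{\Theta}(x_0)$ and, by comparison of slopes, $\Theta'(x_0)\le\tilde{\Theta}'(x_0)<0$, contradicting the inequality $\nu\Theta'(x_0)\ge 0$ above. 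The main conceptual step will be guessing the correct barrier, reverse-engineered from the maximum of $2xF(x)/(1-x^2)^2$; once the tangent-line bound on $H$ produces the matching uniform control on $F$, the rest is a standard scalar-ODE comparison.
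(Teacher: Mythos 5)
Your proof is correct and follows essentially the same route as the paper: part (i) is the paper's first-crossing argument verbatim, and part (ii) uses the same barrier $-\sqrt{2(\beta+E_0)}\,\frac{\sqrt{x-x^2}}{1-x^2}$ and the same tangent-line bound $F(x)\le(\beta+E_0)(1-x)$ recorded in \eqref{boundH}. The only (immaterial) difference is in how the comparison is closed in part (ii): the paper applies a Gronwall-type argument to $Z=\Theta+\sqrt{K}\,\alpha$ using $\alpha'>0$, whereas you use a first-touching-point argument together with the strict monotonicity of the barrier and the local expansion $\Theta(x)=O(x^2)$ near $x=0$; the two mechanisms are interchangeable here.
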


\begin{proof}
Note that $\Theta (0) = \frac{d \Theta}{dx}(0) = 0$  and  $\frac{d^2 \Theta}{dx^2}(0) = - \frac{ H(0) + E_0 }{\nu} < 0 $ since  $H(0) + E_0>0$. 
If the solution crosses to the upper half plane there exists $x_1\in (0,1)$ such that $\Theta(x_1)=0$, $ \frac{d \Theta}{dx}  (x_1) > 0$ and
\begin{equation*}
 \nu \, \frac{d \Theta}{dx}  (x_1)  =  \frac{\Theta^2 (x_1)}{2} - \frac{x_1}{(1-x_1^2)^2}\Bigg[H(x_1) + \, {E_0} \,\, (1 - x_1)\Bigg]
\end{equation*}
This contradicts $H(x_1)>0$  and $E_0 > 0$. Hence, $\Theta$ remains negative for all $x\in(0,1)$.

Let now $E_0 + \beta > 0$ and set $K=2  (\,E_0 + \beta)$, $\alpha(x) = \frac{ \sqrt{x-x^2}}{1-x^2}$.  Then \eqref{eqtheta} and \eqref{boundH} imply
\begin{align}
\nu \, \frac{d \Theta}{dx} &= \frac{\Theta^2 (x)}{2} - \frac{x}{(1-x^2)^2}\Bigg[H\left(x\right) + \, {E_0} \,\, (1 - x)\Bigg], 
\nonumber
\\
&\geq \frac{\Theta^2 (x)}{2} - \frac{1}{2} K  \frac{x(1-x)}{(1-x^2)^2}  
\label{ineqfin}
\\
&= \frac{1}{2} \bigg(\Theta^2(x) - K \, \alpha^2(x)\bigg)
\nonumber
\end{align}
Consider now the quantity $Z(x) \coloneqq \Theta (x) + \sqrt{K} \,\, \alpha(x)$ 
and note that  $\alpha(x)>0$,  $\displaystyle \frac{d \alpha}{dx} >0$. Using \eqref{ineqfin}, we obtain
\begin{align*}
\nu \, \frac{d Z}{dx} > \frac{1}{2} \big(\Theta - \sqrt{K} \, \alpha(x)\big) \, Z(x) \, .
\end{align*}
Since $Z(0) = \Theta(0) + \sqrt{K} \,\, \alpha(0)=0$, we conclude that $Z(x)>0$. 
\end{proof}

When the bounds of Lemma \ref{lem:bound} hold, the solution $\Theta$ of \eqref{eqtheta} cannot blow up in $(0,1)$. 
The hypothesis $\beta + E_0 > 0$ and thus item (ii) of  Lemma \ref{lem:bound} is of theoretical interest, as the parameter $\beta$ 
cannot be a-priori determined.
Nevertheless, using the bound $H(x) < \frac{1}{2} V_\infty^2$, we can prove a variant of (ii) providing a bound from below.
\begin{corollary} \label{cor:lowerbound}
If $E_0 + \frac{1}{2} V_\infty^2 > 0$ then 
\begin{equation*}
{\Theta}(x) > -\sqrt{ 2 E_0 + V_\infty^2 } \,\,\, \frac{\sqrt{x-x^2}}{1-x^2}, \qquad 0<x<1.
\end{equation*}
independently of $\nu > 0$.
\end{corollary}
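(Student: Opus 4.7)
The strategy is to follow the proof of Lemma \ref{lem:bound}(ii), replacing the intrinsic quantity $\beta$ of \eqref{defalpha} by the explicit upper bound $V_\infty^2/2$. The key new ingredient is the \emph{a priori} estimate $\beta \leq V_\infty^2/2$. To obtain it, I would invoke Lemma \ref{lemma-V} to get $\hat V(s) \leq V_\infty$ for all $s > 0$, which yields $\int_0^\xi s\,\hat V^2(s)\,ds \leq \tfrac{V_\infty^2}{2}\xi^2$ uniformly in $\xi$; passing to the limit $\xi \to \infty$ in the defining formula \eqref{defalpha} then gives the bound.

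Next I would promote this to the pointwise linear estimate $H(x) \leq \tfrac{V_\infty^2}{2}(1-x)$ on $[0,1]$. Since $H$ is concave on $[0,1)$ by \eqref{deriv2} with $H(1)=0$ and $-H'(1) = \beta$, writing the concavity inequality $H(x) \leq H(y) + H'(y)(x-y)$ for $x < y < 1$ and sending $y \to 1^-$ gives $H(x) \leq \beta(1-x)$, and combining with $\beta \leq V_\infty^2/2$ yields the claim. Setting $K := V_\infty^2 + 2E_0 > 0$ by hypothesis, one obtains from \eqref{eqtheta}, exactly as in \eqref{ineqfin},
\begin{equation*}
\nu \frac{d\Theta}{dx} \;\geq\; \frac{1}{2}\big(\Theta^2(x) - K\,\alpha^2(x)\big), \qquad \alpha(x) \;=\; \frac{\sqrt{x-x^2}}{1-x^2}.
\end{equation*}

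The remainder is an instance of the argument of Lemma \ref{lem:bound}(ii): introduce $Z(x) := \Theta(x) + \sqrt{K}\,\alpha(x)$ and use $\alpha' > 0$ to derive $\nu Z'(x) > \tfrac{1}{2}(\Theta - \sqrt{K}\,\alpha)\,Z$; since $\Theta(x) = O(x^2)$ while $\alpha(x) \sim \sqrt{x}$ near $x = 0$, $Z$ is positive just to the right of $0$, and the strict inequality rules out a first zero of $Z$ by a maximum-principle argument. This yields $Z > 0$ on $(0,1)$, equivalent to the stated bound on $\Theta$, uniformly in $\nu > 0$.

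I do not anticipate a serious obstacle; the only delicate step is the tangent inequality for $H$ at the endpoint $x=1$, which is clean because the existence of the limit $\beta$ is built into \eqref{defalpha}. I also note that the case $\beta + E_0 \leq 0$, which is not covered directly by Lemma \ref{lem:bound}(ii), presents no issue here: in that regime $H(x) + E_0(1-x) \leq 0$, so the Riccati-type inequality above holds trivially with the required $K > 0$, and the $Z$-argument proceeds unchanged.
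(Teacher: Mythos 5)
Your proof is correct and follows essentially the same route as the paper: replace $\beta$ by the explicit bound $V_\infty^2/2$ coming from $0<\hat V<V_\infty$ (Lemma \ref{lemma-V}), obtain $F(x)\le \big(\tfrac{1}{2}V_\infty^2+E_0\big)(1-x)$, and rerun the comparison argument with $Z=\Theta+\sqrt{K}\,\alpha$ from Lemma \ref{lem:bound}(ii). The only (harmless) detour is passing through $\beta$ and the tangent line at $x=1$; one can get $H(x)\le \tfrac{1}{2}V_\infty^2(1-x)$ directly by bounding the inner integral in \eqref{H} by $\tfrac{1}{2}V_\infty^2\,\tfrac{t^2}{1-t^2}$.
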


\subsection{Convergence as $\nu \to 0$}
\label{convergence}
Let $\big\{  (\Theta_\nu, V_\nu) \big\}_{\nu >0}$ be a family of solutions of \eqref{eq-ns-v3},
\begin{equation}
\label{eql}
\begin{aligned}
\nu \, \frac{d \Theta_\nu}{dx}  &=  \tfrac{1}{2} \Theta_\nu^2  - \mathcal{F}\left(V_\nu ; x\right), 
\\
\nu \frac{d^2 {V_\nu}}{d^2x} &= \Theta_\nu  \, \frac{d {V_\nu}}{dx} ,
\end{aligned}
\end{equation}
with $\nu > 0$  and we are interested in the limit $\nu \to 0$.
We assume that $\kappa := 2E_0+ V_\infty^2>0$ and that the family $(\Theta_\nu, V_\nu)$ satisfies the uniform bound
\begin{equation}
\label{hypothesis}
-\sqrt{\kappa}  \, \frac{\sqrt{x(1-x)}}{1-x^2} < \Theta_\nu(x) < 0, \quad \forall \,\, x \in (0,1).
\tag{B}
\end{equation}
The assumption $\kappa > 0$ is natural since only then solutions of stationary self-similar axisymmetric Euler equations exist, see
sections \ref{sec:euler} and \ref{sec:disceuler}. 

Regarding the uniform bounds \eqref{hypothesis}: 
The left hand inequality is guaranteed by Corollary \ref{cor:lowerbound} while the right hand inequality follows from
Lemma \ref{lem:bound} in the more restrictive range $E_0>0$.  The reader should note, that, as expected from numerical computations
in the parameter range $\kappa > 0$,  the solution of \eqref{eql} enters Zone A - the region that $\Theta(x) < 0$ - as $\nu$ decreases  and stays there. 
This is corroborated by the bifurcation diagram sketched in Figure \ref{bfd1}.

Without loss of generality, we will examine the case $V_\infty>0$. Then \eqref{eq-ns-v3-2} implies 
\begin{align}
\label{dV}
\frac{d V_\nu}{dx} &= V_\infty  \frac{e^\mathlarger{{\frac{1}{\nu} \int_0^x \Theta_\nu(s) ds}}} {\int_0^1 e^{\frac{1}{\nu} \int_0^t \Theta_\nu(s) ds} dt} \,   > 0
\\
0  &< V_{\nu} (x) < V_\infty \, , \quad x \in (0,1) \, .
\end{align}
The sequence $\{ V_\nu \}$ is of bounded variation and by Helly's theorem it admits a convergent subsequence (which will be still denoted by $V_\nu$) such that
\begin{equation}\label{convV}
V_{\nu}(x) \rightarrow V(x) \quad a.e \;  \;  x \in (0,1) \, .
\end{equation}

For the sequence $\{ \Theta_\nu \}$ the uniform bound imply that for $p \in [1,2)$
$$
\int_0^1 |\Theta_\nu (x)|^p dx \le C \int_0^1 (1-x)^{-\frac{p}{2}} dx = : K < \infty
$$
Using weak convergence techniques \cite{Bre2010}, there exists a subsequence  (still denoted by $\Theta_\nu$) and a function 
$\Theta \in L^p (0,1)$ such that 
\begin{equation}\label{wkconvTheta}
\begin{aligned}
\Theta_\nu  &\rightharpoonup \Theta \qquad \mbox{ weakly in $L^p(0,1)$}
\\
\mbox{that is} \qquad  \int \Theta_\nu \psi dx  &\to \int \Theta \psi dx \qquad \mbox{for $\psi \in L^{p^\prime}(0,1)$}
\end{aligned}
\end{equation}
with $p^\prime$ the dual exponent of $p$. 
Setting $\psi (y) = \chi_{[0,x]} (y)$, the characteristic function of $[0,x]$, we deduce
\begin{equation}
\label{limit-G}
G_\nu(x) = \int_0^x \Theta_\nu (s) ds  \rightarrow G(x) :=  \int_0^x \Theta(s) \, ds  \quad  
\end{equation}

Next, we employ ideas developed in the context of zero-viscosity limits for Riemann problems of conservation laws  \cite{Tz94,Tz96,Pap99}.
We set
\begin{equation}
\label{measure}
\mathop{\mathlarger{\pi_\nu \coloneqq \frac{d \, V_\nu}{dx}= V_\infty  \, \, \frac{e^{\frac{1}{\nu} G_\nu(x)}} {\int_0^1 e^{\frac{1}{\nu} G_\nu(t)} dt}}},
\end{equation}
where $\mathlarger{G_\nu(x) = \int_0^x \Theta_\nu(s) \, ds}$ and view $\{ \pi_\nu \}$ as a sequence of probability measures. 
By \eqref{measure}, the sequence $\{ \pi_\nu \}$ is uniformly bounded in measures and there exists a subsequence $\pi_\nu$ 
and a probability measure $\pi$  such that
\begin{equation}\label{convprob}
\pi_\nu \stackrel{\ast}{\rightharpoonup} \pi, \quad \textrm{in measures } \quad \mathcal{M } [0,1] \, .
\end{equation}
Due to the correspondence between functions of bounded variations and Borel measures \cite[Sec 3.5, Sec 7.3]{Foll2013}
one sees that the distribution function of the measure $\pi$ is precisely the function $V(x+)$ and \eqref{convprob} reflects \eqref{convV}.

\smallskip
We are now ready to state the first convergence theorem.

\begin{theorem}\label{maintheorem}
Assume that $E_0 + \frac{1}{2} V_\infty^2 > 0$ and let $\{ (\Theta_\nu, V_\nu) \}_{\nu > 0}$ be a family of solutions satisfying the uniform bound \eqref{hypothesis}.
There exists  a subsequence and a function $(\Theta, V)$ such that 
$$
 V_\nu \to V \, , \; \;  a.e.  \; x \in (0,1) \qquad \Theta_\nu \rightharpoonup \Theta  \; \; \mbox{wk-$L^p (0,1)$ with $1\le p < 2$}
$$
and $(\Theta, V)$ satisfies
\begin{itemize}
\item[(i)] either $\supp \pi = [0,a]$, $a > 0$, in which case  $\Theta = 0$ on $[0,a]$ while $V = Y(x)$ for $x \in [0,a]$, $V = V_\infty$ for 
$x \in [a,1]$ where $Y(x)$ is some nondecreasing function; 
\item[(ii)]  or $\supp \pi = \{ 0 \}$ and $V = 0$ for $x=0$ while $V= V_\infty$ for $x \in (0,1]$.
\end{itemize}
In either case $(V, \Theta)$ satisfy the differential inequality
\begin{equation}\label{ineqfinal}
\frac{1}{2} \Theta^2  (x) \le  \frac{x}{(1-x^2)^2}\Bigg(H(V; x) + \, {E_0} \,\, (1 - x)\Bigg) 
\end{equation}
in the sense of distributions where $H(V ; x)$ is given by \eqref{defF}.
\end{theorem}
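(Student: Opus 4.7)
The plan is to combine the compactness already established with a Laplace-type concentration analysis of $\pi_\nu$, and then to pass to the limit in the Ricatti-type equation \eqref{eql}$_1$ using weak lower semicontinuity. From the arguments preceding the theorem we already have a subsequence along which $V_\nu \to V$ pointwise a.e.\ on $[0,1)$ (Helly), $\Theta_\nu \rightharpoonup \Theta$ weakly in $L^p$ for every $p \in [1,2)$ by the bound \eqref{hypothesis}, $\pi_\nu \stackrel{\ast}{\rightharpoonup} \pi$ in $\mathcal{M}[0,1]$, and $G_\nu(x) = \int_0^x \Theta_\nu \, ds \to G(x) = \int_0^x \Theta\, ds$ pointwise. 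Since each $G_\nu$ is strictly decreasing (as $\Theta_\nu < 0$) and the limit $G$ is continuous (as an absolutely continuous function) and non-increasing with $G(0) = 0$, Polya's theorem upgrades pointwise to uniform convergence on $[0,1]$.

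Set $a := \sup\{ x \in [0,1] : G(x) = 0 \}$, so that $G \equiv 0$ on $[0,a]$ and $G < 0$ on $(a,1]$. Suppose first $a > 0$. For any $\epsilon \in (0, 1-a)$ choose $\delta > 0$ with $G \le -\delta$ on $[a+\epsilon, 1]$; uniform convergence then yields $\epsilon_\nu := \sup_{[0,a]}|G_\nu| \to 0$ and $G_\nu \le -\delta/2$ on $[a+\epsilon,1]$ for small $\nu$. From \eqref{measure},
\[
\int_0^1 e^{G_\nu/\nu} \, dt \;\ge\; a\, e^{-\epsilon_\nu/\nu} \quad \text{and} \quad \sup_{[a+\epsilon, 1]} e^{G_\nu/\nu} \;\le\; e^{-\delta/(2\nu)},
\]
so $\pi_\nu \le (V_\infty/a)\, e^{(\epsilon_\nu - \delta/2)/\nu} \to 0$ uniformly on $[a+\epsilon, 1]$. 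Hence $\pi((a+\epsilon, 1]) = 0$ and, letting $\epsilon \downarrow 0$, $\supp \pi \subset [0,a]$. Since $\pi$ has total mass $V_\infty$, this gives $V(x) = \pi([0,x]) =: Y(x)$ on $[0,a]$ and $V \equiv V_\infty$ on $[a,1]$; moreover $\Theta = G' = 0$ a.e.\ on $[0,a]$, yielding case (i). If $a = 0$, then $G(x) < 0$ for every $x > 0$. Applying the same Laplace splitting at an arbitrarily small $\eta > 0$ (using continuity of $G$ at $0$ and $G(0)=0$ to bound $G_\nu$ below on $[0,\eta]$, together with the strict negativity of $G$ on $[\eta,1]$) one obtains $\pi([\eta,1]) = 0$ for every $\eta > 0$, hence $\pi = V_\infty \delta_0$, which is case (ii).

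For the distributional inequality \eqref{ineqfinal}, take $\psi \in C_c^\infty([0,1))$ with $\psi \ge 0$. Multiplying \eqref{eql}$_1$ by $\psi$ and integrating by parts,
\begin{equation*}
\int_0^1 \tfrac{1}{2}\Theta_\nu^2\, \psi\, dx \;=\; -\nu \int_0^1 \Theta_\nu \, \psi'\, dx \;+\; \int_0^1 \mathcal{F}(V_\nu; x)\, \psi\, dx.
\end{equation*}
The $\nu$-term vanishes as $\nu \to 0$ because $\Theta_\nu$ is uniformly bounded in $L^1$. For the second term, $V_\nu \to V$ a.e.\ together with $0 \le V_\nu \le V_\infty$ and the explicit representation \eqref{defF} yield $\mathcal{F}(V_\nu; \cdot) \to \mathcal{F}(V; \cdot)$ by dominated convergence on $\supp \psi$. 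Finally, since the bound \eqref{hypothesis} makes $\Theta_\nu$ uniformly bounded in $L^\infty$ on every compact subset of $[0,1)$, on $\supp \psi$ we may upgrade the weak convergence to weak $L^2$ convergence (with the same limit $\Theta$, by uniqueness of weak limits), and weak lower semicontinuity of the $L^2$ norm against the non-negative weight $\psi$ gives $\int \tfrac{1}{2} \Theta^2 \psi\, dx \le \liminf_\nu \int \tfrac{1}{2} \Theta_\nu^2 \psi\, dx$. Assembling these three limits produces \eqref{ineqfinal} in the sense of distributions.

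The main obstacle is expected to be the Laplace concentration step: one must rule out any escape of mass of $\pi_\nu$ outside the plateau $\{G = 0\}$, using control of the ratio defining $\pi_\nu$ that works uniformly both for $a > 0$ and, crucially, in the degenerate regime $a = 0$ where the argument requires splitting at a scale $\eta$ that can be sent to zero only after $\nu$ has been sent to zero. The uniform convergence $G_\nu \to G$ supplied by Polya's theorem is the technical backbone of this step.
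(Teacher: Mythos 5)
Your proposal is correct and follows essentially the same route as the paper: a Laplace-type concentration argument showing $\pi_\nu$ localizes on the plateau $\{G=0\}$ (which monotonicity of $G$ forces to be $[0,a]$ or $\{0\}$, replacing the paper's Lemmas 5.8--5.10 in slightly streamlined form), followed by passage to the limit in the Riccati equation using weak lower semicontinuity of $\int \Theta^2\psi$. The only cosmetic differences are the use of Polya's theorem in place of Ascoli--Arzel\`a for the uniform convergence of $G_\nu$, and deducing $\Theta=0$ on $[0,a]$ directly from $G\equiv 0$ there rather than via Lebesgue differentiation on $\supp\pi$.
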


\begin{proof}

For any subset $[0,\alpha] \subset [0,1)$, the Ascoli-Arzela theorem 
with hypothesis \eqref{hypothesis} implies that along a subsequence $G_\nu$ converges uniformly on $[0, \alpha] \subset [0,1)$. 
In particular, 
\begin{equation}
G_\nu(x) \rightarrow G(x) =  \int_0^x \Theta(s) \, ds  \quad  
\end{equation}
uniformly on any compact $[0,\alpha] \subset [0,1)$ and pointwise on $(0,1)$ as indicated in \eqref{limit-G}. 
The limit $G$ is  continuous on $[0,1)$.

We proceed to study the $\supp \pi$. To this end, define
\begin{equation}
\label{maxG}
S =  \{x\in [0,1): G(x) = \max_{y \in [0,1)} G(y) = 0 \},
\end{equation}
The function $G$ is nonincreasing,  $G(0) = 0$, and $S$ the set of point where $G$ attains its maximum. 

\begin{lemma}
\label{supp-lemma1}
Let  $\pi$ the weak-$*$ limit of $\pi_\nu$ defined in \eqref{measure}. Then,
$\supp\pi \subset S$.
\end{lemma}

\begin{proof}
Let $x_0 \in S$ be fixed and suppose there exists $x_1 \in [0,1)$ such that $x_1 \notin S$. Then,
\begin{align*}
G(x_1) < G(x_0) = \max_{y \in [0,1)} G(y).
\end{align*}
Since $G$ is nonincreasing and continuous, there exists $\delta>0$ such that for $x\in [x_0-\delta,x_0+\delta]$ and $y\in [x_1-\delta,x_1+\delta]$
\begin{align}
\label{ineq3}
G(y) \leq  \max_{z \in [x_1-\delta,x_1+\delta]} G(z) <  \min_{z \in [x_0-\delta,x_0+\delta]} G(z) \leq G(x).
\end{align}
Setting $d_M \coloneqq \max_{z \in [x_1-\delta,x_1+\delta]} G(z) $, $D_m \coloneqq \min_{z \in [x_0-\delta,x_0+\delta]} G(z)$, we have by
\eqref{ineq3} that  $d_M< D_m$ and we fix $\eta$ such that
\begin{equation*}
0 < \eta < \frac{D_m-d_M}{4}.
\end{equation*}
Define $J =  [x_0-\delta,x_0+\delta] \cup [x_1-\delta,x_1+\delta]  \subset [0,1)$ and select $\delta$ small so that $J \subset [0,1)$. 
Since $G_\nu \to G$ uniformly on J there exist $\nu_0>0$ such that for any $\nu<\nu_0$ 
\begin{equation}
\label{ineq4}
G_\nu(y) \leq G(y) + \eta \leq d_M +\eta < D_m - \eta \leq G(x) - \eta \leq G_\nu(x),
\end{equation} 
for $x\in [x_0-\delta,x_0+\delta]$ and $y\in [x_1-\delta,x_1+\delta]$. Now, 
\begin{align*}
\pi_\nu([x_1-\delta,x_1+\delta]) &= \int_{[x_1-\delta,x_1+\delta]} \pi_\nu (dx)  = \mathlarger{ V_\infty  \, \, \frac{\int_{[x_1-\delta,x_1+\delta]} e^{\frac{1}{\nu} G_\nu(t)} dt} {\int_0^1 e^{\frac{1}{\nu} G_\nu(t)} dt}},\\
&\leq \mathlarger{ V_\infty  \, \, \frac{\int_{[x_1-\delta,x_1+\delta]} e^{\frac{1}{\nu} G_\nu(t)} dt} {\int_{[x_0-\delta,x_0+\delta]} e^{\frac{1}{\nu} G_\nu(t)} dt}}.
\end{align*}
From \eqref{ineq4}, it follows
\begin{align*}
\pi_\nu([x_1-\delta,x_1+\delta]) \le  V_\infty  \, \, \mathlarger{e^{-\frac{1}{\nu} (D_m -d_M - 2\eta)}} \rightarrow 0, \quad \mbox{as $\nu \to 0$.}
\end{align*}
Hence,  $\pi([x_1-\delta,x_1+\delta]) = 0$ and $x_1 \notin$ $\mbox{supp}\,\pi$. 
\end{proof}

The special structure of our problem induces a structural property on the support of $\pi$.

\begin{lemma}
\label{supp-lemma2} Let $\supp \pi \ne \emptyset$. 
If  $\bar{x}>0$ satisfies $\bar{x}\in supp \,\, \pi$ then for any $0<x_0 < \bar{x}$ we have $x_0 \in supp \,\, \pi$.
\end{lemma}

\begin{proof}
Let $\bar{x}\in supp \,\, \pi$, $\bar{x}>0$. Then, for any $\delta > 0$ we have $\pi\big([\bar{x}-\delta,\bar{x}+\delta]\big) >0$.
We will prove that if $0 < x_0 < \bar{x}$ and $\delta$ as above then
\begin{equation}\label{claim}
\pi\big([x_0-\delta,x_0+\delta]\big) >0 \, .
\end{equation}
In turn, that implies  $x_0 \in \supp \pi$.

Using  \eqref{measure}, we have
\begin{align*}
0<\pi_\nu([\bar{x}-\delta,\bar{x}+\delta]) = \mathlarger{ V_\infty  \, \, \frac{\int_{[\bar{x}-\delta,\bar{x}+\delta]} e^{\frac{1}{\nu} G_\nu(t)} dt} {\int_0^1 e^{\frac{1}{\nu} G_\nu(t)} dt}}.
\end{align*}
Recall that $G_\nu$ is nonincreasing.
Using the change of variables $t = y + \tau$ with $\tau=\bar{x}-x_0 >0$, in the top integral implies
\begin{align*}
\int_{[\bar{x}-\delta,\bar{x}+\delta]} e^{\frac{1}{\nu} G_\nu(t)} dt = \int_{[x_0-\delta,x_0+\delta]} e^{\frac{1}{\nu} G_\nu(\tau+y)} dy \le
\int_{[x_0-\delta,x_0+\delta]} e^{\frac{1}{\nu} G_\nu(y)} dy
\end{align*} 
Therefore,
\[\pi_\nu([\bar{x}-\delta,\bar{x}+\delta])  \leq \pi_\nu([x_0-\delta,x_0+\delta]).\] 
Letting $\nu\to0$, we conclude
\[0< \pi([\bar{x}-\delta,\bar{x}+\delta])  \leq \pi([x_0-\delta,x_0+\delta]).\]
which shows \eqref{claim}
\end{proof}

Since $0 = V(0) < V(1) = V_\infty$ we have that $\supp \pi \ne \emptyset$. Moreover, the form of $G(x)$
leads to the following lemma.

\begin{lemma}
\label{supp-lemma3}
Suppose there exists $\bar{x}>0$ such that $\bar{x}\in \supp \pi$. Then,
\begin{equation}
\Theta(x) = 0 \quad \textrm{for a.e.} \quad x\in \mbox{supp}\, \pi.
\end{equation}
\end{lemma}

\begin{proof}
Since $\Theta \in   L^1 (0,1)$, the Lebesgue differentiation theorem implies
\[\displaystyle \lim_{\substack{|I|\to0 \\ \bar{x}\in I}} \frac{1}{|I|} \int_I \Theta(s) \, ds = \Theta(x) \quad a.e.,\]
where $I$ is any interval containing $x$. If $\bar{x}\in \mbox{supp}\,\pi \subset S$  then
\begin{align*}
\displaystyle \lim_{\substack{h\to0 \\ h>0}} \frac{1}{h} \int_{\bar{x}}^{\bar{x}+h} \Theta(s) \, ds  &= \displaystyle \lim_{\substack{h\to0 \\ h>0}} \frac{1}{h} \Big(G(\bar{x}+h) - G(\bar{x}) \Big) \leq 0,\\
\displaystyle \lim_{\substack{h\to0 \\ h<0}} \frac{1}{h} \int_{\bar{x}-h}^{\bar{x}} \Theta(s) \, ds &= \displaystyle \lim_{\substack{h\to0 \\ h>0}} \frac{1}{h}\Big(G(\bar{x}) - G(\bar{x}-h) \Big) \geq 0. 
\end{align*}
Hence it follows that $\Theta(\bar{x}) = 0 \quad a.e.$
\end{proof}

By virtue of Lemma \ref{supp-lemma2} and the fact that $\supp \pi \ne \emptyset$ there are two possibilities:
either (i)  $\supp \pi = [0, a]$ for some $a > 0$, or (ii) $\supp \pi = \{ 0 \}$. In either case 
$$
\supp \pi \subset S = \{x \in [0,1): G(x) = 0 \}   \qquad [\Theta(x) = 0 \quad  \mbox{ a.e. for $x \in \supp \pi$} ]
$$
We conclude:

\textbf{Case 1:} $\supp \pi = [0, a]$ with $a > 0$. Then $\Theta(x)=0$ on $[0,a]$ and 
\begin{align}
\label{limit-V1} 
\quad V(x) =
\left\{
\begin{aligned}
& Y(x)  \quad && x \in [0,a] \\
& V_\infty, \quad && x \in [a, 1)\\
\end{aligned}
\right. 
\end{align}
for some nondecreasing function $Y(x)$.

\noindent
\textbf{Case 2:} $\supp \pi = \{ 0 \}$ and 
\begin{align} 
\label{limit-V2}
V(x) =
\left\{
\begin{aligned}
& 0  \quad && x=0 \\
& V_\infty, \quad && x \in (0, 1)\\
\end{aligned}
\right. 
\end{align}

This provides some information on $(V, \Theta)$ but it is incomplete. Some further information is obtained by passing to
the limit $\nu \to 0$ in \eqref{eqtheta}. First, the convergence  \eqref{convV} implies
$$
H(V_\nu ; x ) \to H(V ; x) =  \Bigg[\int_x^1 \frac{1-t^2}{t^2} \bigg(\displaystyle \int_0^t \frac{\sigma}{(1-\sigma^2)^2} V^2(\sigma) d\sigma \bigg) \, dt \Bigg]
$$
Then \eqref{eqtheta} together with the property that $\Theta (x)^2 \le \mbox{wk-lim} ( \Theta_n (x)^2 )$ imply that $(\Theta, V)$ satisfy
the differential inequality
\begin{equation}
\frac{1}{2} \Theta^2  (x) \le  \frac{x}{(1-x^2)^2}\Bigg(H(V; x) + \, {E_0} \,\, (1 - x)\Bigg)
\end{equation}
in the sense of distributions.
\end{proof}

The inequality \eqref{ineqfinal} is due to the fact that only weak convergence for $\Theta$ is available under \eqref{hypothesis}. It can be improved 
if we have pointwise convergence for $\Theta$, namely if
\begin{equation}\label{hypconvergence}
\Theta_\nu \to \Theta \quad a.e., \,\, as \quad \nu\to 0.  \tag{A}
\end{equation}
We will later justify \eqref{hypconvergence} under the hypothesis $E_0 > 0$. 
Using \eqref{hypconvergence}, one may pass to the limit in \eqref{eqtheta} in the sense of distributions
and deduce
\begin{equation}
\label{limit-Th}
\frac{\Theta^2 (x)}{2} = \mathcal{F}(V,x).
\end{equation}
Let us compute $\Theta(x)$ for Case $2$. Since $V(x)$ is given by \eqref{limit-V2},  \eqref{limit-Th} yields
\begin{align*}
\frac{\Theta^2 (x)}{2} = \mathcal{F}(V_\infty,x) = \big(\frac{1}{2}V_\infty^2 + \, {E_0}\big) \,\, \frac{x(1 - x)}{(1-x^2)^2}
\end{align*}
and thus, since $\Theta(x)<0$, 
\[\Theta(x) = -\sqrt{V_\infty+2E_0}\,\, \frac{\sqrt{x(1 - x)}}{(1-x^2)} \, .  \]

For Case $1$, we recall that $V(x)$ is given by \eqref{limit-V1} and $\Theta(x)=0$ a.e. for $x\in[0,a]$, $a>0$. Then \eqref{limit-Th} 
with \eqref{defF} imply
\begin{align*}
E_0 \,\, (1-x) = - \int_x^1 \frac{1-t^2}{t^2} \bigg(\displaystyle \int_0^t \frac{\sigma}{(1-\sigma^2)^2} V^2(\sigma) d\sigma \bigg) \, dt \, ,
\quad \mbox{for $x \in [0,a]$}.
\end{align*}
If we differentiate this, we get
\begin{align*}
E_0 \,\, \frac{x^2}{1-x^2} = - \displaystyle \int_0^x \frac{\sigma}{(1-\sigma^2)^2} V^2(\sigma) d\sigma.
\end{align*}
Differentiating once more yields
\begin{align*}
 V^2(x) = - 2\,\, E_0 \quad for \,\, x\in(0,a) 
\end{align*}
which contradicts the assumption $\supp \pi = [0,a]$ with $a> 0$ . We conclude that only Case 2 can happen and $(\Theta_\nu, V_\nu)$ converges almost everywhere as $\nu\to0$ to a solution $(\Theta, V)$ of the Euler equations. This provides a criterion to select the type of Euler solution that occurs
and clearly only solutions with solutions with $\Theta<0$ are admissible.

In conclusion, we have the following theorem
\begin{theorem}\label{secondtheorem}
Assume that $E_0 + \frac{1}{2} V_\infty^2 > 0$ and $\{ (\Theta_\nu, V_\nu) \}_{\nu > 0}$ is a family of solutions satisfying the uniform bound \eqref{hypothesis}
and the convergence \eqref{hypconvergence}.
Then $(\Theta, V)$ is a smooth solution of \eqref{ssform} with the form described in section \ref{sec:euler}, but under the restriction
$\Theta (x) < 0$.
\end{theorem}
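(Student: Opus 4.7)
The plan is to build directly on Theorem \ref{maintheorem}, whose classification of the possible limits $V$ into Cases (i) and (ii) is already available. The added assumption \eqref{hypconvergence} of almost everywhere convergence of $\Theta_\nu$ should promote the distributional differential inequality \eqref{ineqfinal} into an equality that pins down $\Theta$ explicitly, and then rule out Case (i) by a direct calculation. The final step is to recognize the resulting pair $(\Theta, V)$ as the explicit solution of section \ref{sec:euler} under the reverse of the transformation \eqref{firsttrans}--\eqref{def_x}.

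First I would pass to the limit $\nu\to 0$ in the equation
\[
\nu\, \frac{d\Theta_\nu}{dx} \;=\; \tfrac{1}{2}\Theta_\nu^2 \;-\; \mathcal{F}(V_\nu;x).
\]
The left-hand side vanishes in the sense of distributions because $\nu G_\nu(x)=\nu\int_0^x\Theta_\nu$ converges uniformly to zero on compacts of $[0,1)$ by \eqref{hypothesis}. On the right, hypothesis \eqref{hypconvergence} combined with the dominating bound $|\Theta_\nu(x)|\le\sqrt{\kappa}\,\sqrt{x(1-x)}/(1-x^2)\in L^p(0,1)$ for $p<2$ gives dominated convergence of $\Theta_\nu^2/2$, upgrading the lower-semicontinuity inequality \eqref{ineqfinal} to the equality
\[
\frac{\Theta^2(x)}{2} \;=\; \mathcal{F}(V;x), \qquad x\in(0,1).
\]
The convergence $V_\nu\to V$ a.e.\ with uniform bound $V_\nu\le V_\infty$ also justifies passage to the limit in $\mathcal{F}(V_\nu;x)\to\mathcal{F}(V;x)$.

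Next I would eliminate Case (i) of Theorem \ref{maintheorem}. In that case $\Theta\equiv 0$ on $[0,a]$, so the equality above forces $\mathcal{F}(V;x)=0$ on $[0,a]$. Using the explicit representation \eqref{defF} for $\mathcal{F}$ and differentiating twice in $x$ would yield $V^2(x)=-2E_0$ on $(0,a)$, so $V$ is constant there. But then $\pi=dV/dx$ vanishes on the open interval $(0,a)$, which contradicts $\supp\pi=[0,a]$ produced by Case (i). Therefore only Case (ii) is possible, i.e.\ $V(x)=V_\infty$ for $x\in(0,1]$ with a jump from $0$ at the origin.

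Substituting $V\equiv V_\infty$ into $\mathcal{F}$ and using the closed-form
\[
\mathcal{F}(V_\infty;x)\;=\;\Bigl(\tfrac{1}{2}V_\infty^2 + E_0\Bigr)\,\frac{x(1-x)}{(1-x^2)^2}
\]
(which I would verify by direct integration in \eqref{defF}), the equality $\Theta^2/2=\mathcal{F}(V;x)$ together with the sign constraint $\Theta<0$ from \eqref{hypothesis} gives
\[
\Theta(x) \;=\; -\sqrt{V_\infty^2 + 2E_0}\;\frac{\sqrt{x(1-x)}}{1-x^2},
\]
which is well defined thanks to $\kappa=V_\infty^2+2E_0>0$. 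Finally I would undo the changes of variable: setting $\xi=x/\sqrt{1-x^2}$ and $\theta(\xi)=\Theta(x)/\sqrt{1+\xi^2}$, a short calculation gives $\theta^2=2k_0\,\phi(\xi)$ with $k_0=E_0+V_\infty^2/2$ and $\phi$ as in \eqref{defphi}, which is exactly the explicit Euler solution \eqref{euler-sol-theta}. The only delicate point in the argument is the upgrade from the weak inequality to the strong equality for $\Theta^2$: this is precisely where assumption \eqref{hypconvergence} is indispensable, since under \eqref{hypothesis} alone the weak-$L^p$ convergence is not enough to preserve the quadratic nonlinearity.
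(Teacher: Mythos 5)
Your proposal is correct and follows essentially the same route as the paper: passing to the limit in the Riccati equation using \eqref{hypconvergence} to obtain the equality $\tfrac{1}{2}\Theta^2=\mathcal{F}(V;x)$, ruling out Case (i) by differentiating $\mathcal{F}(V;\cdot)=0$ twice to reach $V^2=-2E_0$ (a contradiction with $\supp\pi=[0,a]$), and then reading off the explicit negative branch in Case (ii). The only point to phrase carefully is that the dominating function for $\Theta_\nu^2$, namely $\kappa\,x(1-x)/(1-x^2)^2$, is only locally integrable on $[0,1)$, so dominated convergence should be applied on compact subsets, which suffices for the distributional (hence a.e.) identity on $(0,1)$.
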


If the solution takes values in the range where $\Theta(x) < 0$ then it lies in Zone A. The analysis of section \ref{sec:existence} shows that
$F$ in \eqref{boundH} then takes values $F(x) > 0$ on $(0,1)$. The reader should note that numerical computations suggest
 that as $\nu$ decreases  the solution of \eqref{eq-ns-v3} enters Zone A,  that is the region that $\Theta(x) < 0$,  and stays there.
No oscillations in $\Theta$ are observed numerically.

We next restrict in the range $E_0 > 0$ and justify \eqref{hypconvergence}.

\begin{proposition}\label{prop:compactness}
If $E_0 > 0$ then the family of function $\{ A_\nu (x) \}$ defined by
\begin{equation}\label{defA}
A_\nu (x) = (1-x^2) \Theta_\nu (x)
\end{equation}
is of bounded variation on $[0,1]$ and along a subsequence $\{ \Theta_\nu \}$ satsfies \eqref{hypconvergence}.
\end{proposition}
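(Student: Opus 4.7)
Since $E_0 > 0$, Lemma~\ref{lem:bound}(i) gives $\Theta_\nu(x) < 0$ on $(0,1)$ while Corollary~\ref{cor:lowerbound} yields $|\Theta_\nu(x)| \le \sqrt{\kappa}\,\sqrt{x(1-x)}/(1-x^2)$ with $\kappa = 2E_0 + V_\infty^2$. Multiplying by $(1-x^2)$ gives
\[
|A_\nu(x)| \le \sqrt{\kappa}\,\sqrt{x(1-x)}, \qquad x \in [0,1],
\]
so $\{A_\nu\}$ is uniformly bounded in $C([0,1])$ and satisfies $A_\nu(0) = A_\nu(1) = 0$.

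Multiplying the Riccati equation $\nu\Theta_\nu' = \tfrac12 \Theta_\nu^2 - \mathcal{F}(V_\nu;x)$ by $(1-x^2)^2$ and using the identities $A_\nu = (1-x^2)\Theta_\nu$ and $(1-x^2)^2 \mathcal{F}(V_\nu;x) = B(x) := x\bigl[H(V_\nu;x) + E_0(1-x)\bigr]$, a direct rearrangement yields
\begin{equation}\label{plan:eqA}
\nu(1-x^2)\,A_\nu'(x) = \tfrac{1}{2} A_\nu^2(x) - B(x) - 2\nu x\, A_\nu(x).
\end{equation}
From \eqref{deriv1}--\eqref{deriv2} a direct computation gives $B''(x) = 2H'(x) + xH''(x) - 2E_0 < 0$, so $B$ is strictly concave with $B(0) = B(1) = 0$, hence positive and unimodal on $(0,1)$ with a unique maximum and $\|B\|_\infty \le \kappa/2$.

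The central estimate is a uniform bound on $\mathrm{TV}(A_\nu;[0,1])$. From \eqref{plan:eqA}, any interior critical point $x_c$ of $A_\nu$ satisfies $A_\nu^2(x_c) - 4\nu x_c A_\nu(x_c) = 2B(x_c)$; with $A_\nu<0$, this forces
\[
A_\nu(x_c) = 2\nu x_c - \sqrt{4\nu^2 x_c^2 + 2B(x_c)}, \qquad |A_\nu(x_c)| \le \sqrt{\kappa} + 2\nu.
\]
To bound the number of such critical points, I argue that outside a boundary layer at $x=0$ of width $O(\nu^{2/3})$ (as described by the asymptotic analysis of Section~\ref{bound-layer}), the first-order Riccati-type dynamics \eqref{plan:eqA} attract $A_\nu$ exponentially fast to the negative equilibrium curve $-\sqrt{2B(x)}$, which is itself unimodal by the concavity of $B$. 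Combined with the initial data $A_\nu(0)=A_\nu'(0)=0$ and $A_\nu''(0) = -(H(0)+E_0)/\nu < 0$ (so that $A_\nu$ starts by monotonically decreasing), this tracking argument---made rigorous by monitoring the sign of $F_\nu(x) := A_\nu^2(x) - 4\nu x A_\nu(x) - 2B(x)$, whose zeros are exactly the critical points of $A_\nu$---shows that $A_\nu$ has a uniformly bounded number of monotonic arcs, whence $\mathrm{TV}(A_\nu;[0,1]) \le C$ independently of $\nu$.

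Having established uniform $L^\infty$ and BV bounds, Helly's selection theorem yields a subsequence $A_{\nu_k} \to A$ pointwise on $[0,1]$ with $A \in BV([0,1])$. Since $(1-x^2) > 0$ on $[0,1)$, dividing gives $\Theta_{\nu_k}(x) = A_{\nu_k}(x)/(1-x^2) \to A(x)/(1-x^2) =: \Theta(x)$ pointwise on $[0,1)$, hence a.e.\ on $[0,1]$, which is precisely \eqref{hypconvergence}. The principal obstacle is the rigorous justification that the number of monotonic arcs of $A_\nu$ is bounded independently of $\nu$: the heuristic via first-order attraction to a smooth unimodal equilibrium is transparent, but making it precise requires a careful qualitative analysis of $F_\nu$ and of the interplay between the fast Riccati dynamics and the slowly varying equilibrium $-\sqrt{2B(x)}$.
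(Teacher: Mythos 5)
Your setup coincides with the paper's: the same equation for $A_\nu$ (your \eqref{plan:eqA} is the paper's \eqref{eqA} with $B=R$), the same endpoint values $A_\nu(0)=A_\nu(1)=0$, the same uniform $L^\infty$ bound $|A_\nu|\le\sqrt{\kappa}\sqrt{x(1-x)}$, and the same observation that $R(x)=x\bigl(H(x)+E_0(1-x)\bigr)$ is positive and strictly concave when $E_0>0$. The conclusion via Helly and division by $1-x^2$ is also the paper's. The genuine gap is exactly where you flag it: the uniform bound on the number of monotone arcs of $A_\nu$. Your proposed route --- exponential attraction to the slow manifold $-\sqrt{2B(x)}$ outside an $O(\nu^{2/3})$ layer --- is a heuristic, not a proof; as stated it does not exclude small-amplitude oscillations of $A_\nu$ about the equilibrium curve (each of which could contribute to the variation), and turning it into a rigorous statement would require quantitative slow--fast estimates that you do not supply. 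Since the whole point of the proposition is the BV bound, the argument is incomplete as written.

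The paper closes this step with a short counting argument that you may want to compare with. Since $A_\nu(0)=A_\nu(1)=0$ and $A_\nu<0$ inside, $A_\nu$ has an odd number of oscillations; suppose it had at least three. By Sard's theorem one can pick a level $c<0$ that is not a critical value, giving four consecutive points $x_1<x_2<x_3<x_4$ with $A_\nu(x_i)=c$ and $\frac{dA_\nu}{dx}$ alternating in sign ($<0,>0,<0,>0$). Evaluating \eqref{plan:eqA} at these points shows that $f(x):=\tfrac12 c^2-2\nu x c-R(x)$ alternates in sign along $x_1,\dots,x_4$, hence vanishes at three consecutive interior points; but $f$ is strictly convex (because $R''<0$), so it has at most two zeros --- a contradiction. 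Therefore $A_\nu$ decreases to a single minimum and then increases, so $\mathrm{TV}(A_\nu;[0,1])\le 2\sup|A_\nu|\le 2\sqrt{\kappa}$ uniformly in $\nu$, and the rest of your argument goes through. I recommend replacing your slow-manifold heuristic with this level-set/convexity argument (or an equivalent rigorous substitute).
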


\begin{proof}
Using \eqref{eqtheta} we see that the functions $\{ A_\nu \}$ in \eqref{defA} satisfy the differential equation
\begin{equation}\label{eqA}
\nu (1-x^2) \frac{dA_\nu}{dx} = \tfrac{1}{2} A_\nu^2 - 2 x \nu A_\nu - R(x)
\end{equation}
where
\begin{equation}\label{defR}
R(x) := x F(x) = x \Big ( H(x) + E_0 (1-x) \Big )
\end{equation}
Observe that  $A(0) = 0$ and using \eqref{def_x}, \eqref{defvar}, \eqref{firsttrans} and the L'Hopital rule we compute
$$
A(1) = \lim_{x \to 1} (1-x^2) \Theta (x) = \lim_{\xi \to \infty} \frac{\hat \theta (\xi)}{1+\xi^2} = \lim_{\xi \to \infty} \frac{\theta(\xi)}{\sqrt{1+\xi^2}} = \theta^\prime (\infty) = 0
$$
The uniform bounds \eqref{hypothesis} implies
\begin{equation}\label{boundofA}
- \sqrt{\kappa} \sqrt{x(1-x)} \le A_\nu (x) < 0
\end{equation}

Next, turn to \eqref{defR} and use the hypothesis $E_0 > 0$ together with \eqref{deriv1}, \eqref{deriv2} to conclude that
\begin{equation}\label{convexprop}
\begin{aligned}
R(x) > 0 \, , \quad  \frac{d^2 R}{dx^2} = x \frac{d^2 H}{dx^2} + 2 x \Big ( \frac{dH}{dx} - E_0 \Big )  < 0 \quad x \in (0,1)
\end{aligned}
\end{equation}
$\frac{dR}{dx}(0) = H(0) + E_0 > 0$, $\frac{dR}{dx} (1) = \frac{dH}{dx}(1) - E_0 < 0$. We see that $R(x)$ has a concave graph, vanishing at the endpoints,
facing downwards.

Since $A(0)=A(1) = 0$ the function $A(x)$ must have at least one minimum, that is by the nature of the boundary condition the function $A(x)$ oscillates once. 
Again due to the boundary conditions it must have an odd number of oscillations. By Sard's theorem the set of critical values of $A$ has measure zero.
If the graph of $A$ has three oscillations, then there will be a level $c < 0$ 
- which can be selected so as not to be a critical value - and four consecutive points $x_1 < x_2 < x_2 < x_4$ such that
$A(x_1) = A(x_2) = A(x_3) = A(x_4) = c$ while $\frac{dA}{dx} (x_1) < 0$, $\frac{dA}{dx} (x_2) >  0$, $\frac{dA}{dx} (x_3) < 0$ and $\frac{dA}{dx} (x_4) >  0$.

The function  $f(x) := \tfrac{1}{2}c^2 - 2 \nu x c - R(x)$ satisfies
$$
f(x_1) > 0 , \quad f(x_2) <  0 \, , \quad f(x_3) >  0 \, ,  \quad  f(x_4)  <  0 \, .
$$
Then, there are three consecutive points $y_1 < y_2 < y_3$ in $(0,1)$ where the function $f$ vanishes. This contradicts the fact that by \eqref{convexprop} the 
function $f$ is strictly convex. 

We conclude that $A(x)$ is initially decreasing, reaches a minimum and is afterwards increasing. It also satisfies \eqref{boundofA}. Hence 
$\{ A_\nu \}_{\nu > 0}$ has uniformly bounded total variation and along a subsequence 
$A_\nu  (x) \to A (x) $  for almost every $x \in (0,1)$.  Obviously $A_\nu \rightharpoonup A$ weakly and we conclude using \eqref{wkconvTheta}
that $A = (1-x^2) \Theta$ and \eqref{hypconvergence} holds.
\end{proof}

Combining Proposition \ref{prop:compactness} with Theorem \ref{secondtheorem} gives

\begin{corollary}\label{final}
If $E_0 > 0$ then the family of solutions 
$\{ (\Theta_\nu, V_\nu) \}_{\nu > 0}$  is of bounded variation and along a subsequence 
converges, $\Theta_\nu \to \Theta$ and $V_\nu \to V$ a.e. in $(0,1)$. The function
$(\Theta, V)$ is a smooth solution of \eqref{ssform} of the form described in section \ref{sec:euler}, and satisfies the restriction
$\Theta (x) < 0$.
\end{corollary}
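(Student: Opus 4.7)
The plan is to assemble Corollary \ref{final} from the ingredients already in place: the uniform bounds of Section \ref{sec:existence}, the compactness of Proposition \ref{prop:compactness}, and the identification of the limit in Theorem \ref{secondtheorem}. First I would verify that the hypotheses of the two preceding results all hold under the single assumption $E_0 > 0$. Since $E_0 > 0$ implies $\kappa = 2E_0 + V_\infty^2 > 0$, Corollary \ref{cor:lowerbound} supplies the lower bound $\Theta_\nu(x) > -\sqrt{\kappa}\,\sqrt{x(1-x)}/(1-x^2)$ uniformly in $\nu$, while Lemma \ref{lem:bound}(i) supplies the upper bound $\Theta_\nu(x) < 0$. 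Together these are exactly hypothesis \eqref{hypothesis}.

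Next I would invoke Proposition \ref{prop:compactness}: its conclusion, obtained under the same assumption $E_0 > 0$, is that $A_\nu = (1-x^2)\Theta_\nu$ has uniformly bounded total variation, so by Helly's selection theorem a subsequence converges a.e.\ on $(0,1)$ to some $A$. Combined with the weak-$L^p$ limit $\Theta_\nu \rightharpoonup \Theta$ from \eqref{wkconvTheta}, the identification $A = (1-x^2)\Theta$ forces $\Theta_\nu \to \Theta$ a.e., which is precisely hypothesis \eqref{hypconvergence}. In parallel, Lemma \ref{lemma-V} tells us $0 < V_\nu < V_\infty$ is monotone, so by Helly again (applied to a further subsequence) $V_\nu \to V$ a.e.\ on $(0,1)$. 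Thus the full family $(\Theta_\nu, V_\nu)$ is of bounded variation and converges a.e.\ along a common subsequence.

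Having secured both \eqref{hypothesis} and \eqref{hypconvergence}, I would then simply apply Theorem \ref{secondtheorem} to the limit pair $(\Theta, V)$. That theorem identifies the admissible Euler profile: only Case 2 of the Theorem \ref{maintheorem} dichotomy survives (Case 1 is ruled out because $V^2 \equiv -2E_0$ on $(0,a)$ is incompatible with $E_0 > 0$), and the a.e.\ limit $(\Theta, V)$ satisfies the equality $\tfrac{1}{2}\Theta^2 = \mathcal{F}(V; x)$ together with $\Theta(x) < 0$, which by the analysis in Section \ref{sec:euler} is a smooth solution of \eqref{ssform} in the form \eqref{eulersolution} with sign selection $\theta > 0$ (equivalently $U < 0$, the single-cell ascending profile).

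I do not anticipate any real obstacle in this argument, since it is essentially a bookkeeping step that stitches together results already established. The only minor care needed is to extract a common subsequence for which both $\Theta_\nu \to \Theta$ a.e.\ (from the BV bound on $A_\nu$) and $V_\nu \to V$ a.e.\ (from the monotonicity of $V_\nu$), and to confirm that the weak limit from \eqref{wkconvTheta} coincides with the pointwise limit furnished by Proposition \ref{prop:compactness}, which is automatic from $A = (1-x^2)\Theta$ and the uniqueness of distributional limits.
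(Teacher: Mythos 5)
Your argument is correct and is exactly the paper's route: the paper's entire proof of Corollary \ref{final} is the one-line remark that it follows by combining Proposition \ref{prop:compactness} with Theorem \ref{secondtheorem}, and your proposal supplies precisely the bookkeeping that remark presupposes --- $E_0>0$ yields \eqref{hypothesis} via Lemma \ref{lem:bound}(i) together with Corollary \ref{cor:lowerbound}, Proposition \ref{prop:compactness} upgrades the weak limit to \eqref{hypconvergence} through the BV bound on $A_\nu=(1-x^2)\Theta_\nu$ and Helly, the monotonicity of $V_\nu$ handles the other component, and Theorem \ref{secondtheorem} (with Case 1 excluded by $V^2\equiv-2E_0$) identifies the limit.

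One slip in your closing parenthetical: since $\Theta(x)=\hat\theta(\xi)=\sqrt{1+\xi^2}\,\theta(\xi)$, the restriction $\Theta<0$ forces $\theta<0$, i.e.\ the \emph{negative} sign in \eqref{eulersolution} (so $U>0$, $W<0$: flow outward near the plane $z=0$ and downward near the vortex line, as in Figure \ref{fig:th}(b)), not $\theta>0$ with an ascending profile. This does not affect the validity of your proof of the corollary as stated, but the Euler branch selected in the zero-viscosity limit is the descending one, consistent with the paper's concluding remark after Corollary \ref{final}.
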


Note that when $V_\infty > 0$ among the two solutions of the Euler equations \eqref{eulersolution} the one selected at the zero-viscosity limit
corresponds to the negative sign, see Fig. \ref{fig:th} (b). One easily checks that when $V_\infty < 0$ again the negative sign is selected.


\section{Boundary layer analysis for a  model problem}
\label{bound-layer}
In this section, we  investigate the asymptotic behavior of solutions of system \eqref{eq-ns-v3} as $\nu\to0$. We consider a model problem which is
 a simplification of the initial equations, with the objective to understand the boundary layer. 
 For small viscosities, $V(x)$ is approximated by setting  $V(x) \equiv V_\infty$, leading to 
\[\mathcal{F}(V,x) = \mathcal{F}(V_\infty,x) = \big(\frac{V_\infty^2}{2} + \, {E_0}\big) \frac{x - x^2}{(1-x^2)^2}.\]
This reduces system \eqref{eq-ns-v3} to a simpler form which will be referred to as the model problem,  namely, 
\begin{subequations}
\label{model-sys}
\begin{align}
\label{model-Th}
\frac{\bar{\Theta}^2 (x)}{2} - \nu \, \frac{d \bar{\Theta}}{dx}  (x)  &= K \frac{x - x^2}{(1-x^2)^2},\\
\label{model-V}
\nu \frac{d^2 \bar{V} }{d^2x} & = \bar{\Theta} \,\, \frac{d \bar{V}}{dx}, 
\\[5pt]
\bar{\Theta}(0)=0,\quad \bar{V}(0)&=0, \quad  \bar{V}(1)=V_\infty,
\end{align}
\end{subequations}
where $K= \frac{V_\infty^2}{2} + \, {E_0}>0$. Note that equation \eqref{model-Th} for $\bar{\Theta}$ is now independent of $\bar{V}$, while \eqref{model-V} is still coupled.

We use the method of matched asymptotic expansions from singular perturbation theory. The method aims to to construct an asymptotic approximation of the solution inside the boundary layer and a solution valid away from the boundary layer, and then combine them through a matching process. 
We refer to solutions within the boundary layer as inner solutions, and to solutions away of the layer as the outer solutions,  \cite{Lage88,Holmes2003}.

We consider first the equation \eqref{model-Th} for $\bar{\Theta}$
\begin{subequations}
\begin{align}
\label{mod-eq}
\frac{\bar{\Theta}^2 (x)}{2} - \nu \, \frac{d \bar{\Theta}}{dx}  (x)  &= K \frac{x - x^2}{(1-x^2)^2},\\
\label{mod-bc1}
\bar{\Theta}(0)&=0,
\end{align}
\end{subequations}
and apply the method of matched asymptotic expansions. 
To construct the outer solution, assume that $\bar{\Theta}$ can be written as a power series with powers of $\nu$, 
\begin{equation*}
\bar{\Theta}(x) \approx \bar{\Theta}_0(x) + \nu \bar{\Theta}_1(x) + \mathcal{O}(\nu^2),
\end{equation*}
and substitute it back to \eqref{mod-eq}. If we focus on the leading terms, i.e terms of order $\nu^0$, we obtain the equation
\begin{equation}
\bar{\Theta}_0(x) = \pm \sqrt{ 2 \, K \frac{x - x^2}{(1-x^2)^2}}.
\end{equation}
The choice for the sign $\bar{\Theta}_0$ will depend on $\bar{\Theta}$. Recall that $\bar{\Theta}$ solve the equation \eqref{mod-eq}, we have 
\[\bar{\Theta}(0) = 0, \quad \frac{d \bar{\Theta}}{dx}(0) = 0, \quad \textrm{and} \quad \frac{d^2 \bar{\Theta}}{dx^2}(0)<0,\]
which implies $\bar{\Theta}$ should be negative for all $0<x<1$. Hence, we choose $\bar{\Theta}_0$ to be negative. Note that the boundary condition at $x=0$ is automatically satisfied.
 

We proceed now with the inner solution. Expecting  the boundary layer to locate at $x=0$, we introduce the stretched variable $\eta = \nu^{-\frac{2}{3}} \, x$. 
Setting $\Phi(\eta) = \nu^{-\frac{1}{3}}{\bar{\Theta}(x)}$, the problem \eqref{mod-eq}-\eqref{mod-bc1} takes the form
\begin{subequations}
\begin{align}
\label{asy-Phi}
& \frac{\Phi^2}{2} - \frac{d \Phi}{d\eta} = K \, \frac{\eta}{(1-\nu^{\frac{2}{3}} \,\eta) (1+\nu^{\frac{2}{3}} \, \eta)^2} \\
& \Phi(0)=0,
\end{align}
\end{subequations}
If the approximation of $\Phi$ in powers of $\nu$ is given by
\begin{equation*}
\Phi(\eta) \approx \Phi_0(\eta) + \nu \Phi_1(\eta)  + \mathcal{O}(\nu^2),
\end{equation*}
and using the Taylor expansion of the right-hand side of \eqref{asy-Phi}, 
\[\frac{\eta}{(1-\nu^{\frac{2}{3}} \,\eta) (1+\nu^{\frac{2}{3}} \, \eta)^2} \approx \eta +   \mathcal{O}(\nu^\frac{2}{3}),\] 
we obtain  for the leading term
\begin{subequations}
\label{asy}
\begin{align}
\label{asy-eq}
& \frac{\Phi_0^2}{2} - \frac{d \Phi_0}{d\eta}   = K \, \eta, \\
\label{asy-bc}
& \Phi_0(0)=0.
\end{align}
\end{subequations}

Inspired by \cite{MR76}, equation \eqref{asy-eq} can be transformed into the well-known Airy equation
\[y''(t) -  t \,y(t)=0.\]
Using the transformation
\begin{equation}
\label{Airy}
\Phi_0(\eta) = -\frac{2}{\mathcal{U}(\eta)} \frac{d \mathcal{U}}{d\eta},
\end{equation}
equation \eqref{asy-eq} reduces to
\[\frac{d^2 \mathcal{U}}{d\eta^2} = \frac{K}{2}  \eta \, \mathcal{U}(\eta).\]
and its solutions are given as a linear combination of special functions $Ai$, the Airy function of the first kind, and $Bi$, the Airy function of the second kind, see \cite[Appendix B.1]{Holmes2003}. Namely,
\[\mathcal{U}(\eta) = \text{Ai}\left(\big(\frac{K}{2}\big)^{1/3} \eta \right) c_1 +\text{Bi} \left(\big(\frac{K}{2}\big)^{1/3} \eta \right) c_2,\]
where $c_1, c_2$ are integration constants. Using \eqref{Airy}, we express $\Phi_0$ as a linear combination of Airy functions $Ai, Bi$ and their derivatives  $Ai', Bi'$. Hence, $\Phi_0$ takes the form
\[\Phi_0(\eta) = -2 \bigg(\frac{K}{2}\bigg)^{1/3} \, \frac{\text{Ai}'\left(\big(\frac{K}{2}\big)^{1/3} \eta \right) c_1 +\text{Bi}'\left(\big(\frac{K}{2}\big)^{1/3} \eta \right) c_2}{\text{Ai}\left(\big(\frac{K}{2}\big)^{1/3} \eta \right) c_1 +\text{Bi} \left(\big(\frac{K}{2}\big)^{1/3} \eta \right) c_2}, \]
or equivalently,
\[\Phi_0(\eta) = -2 \bigg(\frac{K}{2}\bigg)^{1/3} \, \frac{\text{Ai}'\left(\big(\frac{K}{2}\big)^{1/3} \eta \right) C +\text{Bi}'\left(\big(\frac{K}{2}\big)^{1/3} \eta \right)}{\text{Ai}\left(\big(\frac{K}{2}\big)^{1/3} \eta \right) C +\text{Bi} \left(\big(\frac{K}{2}\big)^{1/3} \eta \right) }, \]
where $C= \frac{c_1}{c_2}$ is a constant. Imposing now the boundary condition \eqref{asy-bc}, we get
\begin{align*}
0 = \frac{\text{Ai}'\left(0\right) C +\text{Bi}'\left(0 \right)}{\text{Ai}\left(0\right) C +\text{Bi} \left(0 \right)} = \frac{-\frac{C}{3^{1/3} \, \Gamma \left(\frac{1}{3}\right)} + \frac{3^{1/6}}{\Gamma \left(\frac{1}{3}\right)}}{\frac{C}{3^{2/3} \Gamma \left(\frac{2}{3}\right)}+\frac{1}{3^{1/6} \Gamma \left(\frac{2}{3}\right)}}= \frac{3^{1/3} \, \Gamma(\frac{2}{3}) (-C+ \sqrt{3})}{\Gamma(\frac{1}{3})(C+ \sqrt{3})},
\end{align*}
where $\Gamma$ denotes the Gamma function. Thus, the constant $C$ is determined as
\[C = \sqrt{3}.\]
Consequently, the leading term $\Phi_0$ of the inner solution becomes
\[\Phi_0(\eta) = -2 \bigg(\frac{K}{2}\bigg)^{1/3} \frac{\sqrt{3}\, \text{Ai}'\left(\big(\frac{K}{2}\big)^{1/3} \eta \right)+\text{Bi}'\left(\big(\frac{K}{2}\big)^{1/3} \eta \right)}{\sqrt{3} \, \text{Ai}\left(\big(\frac{K}{2}\big)^{1/3} \eta \right)+\text{Bi} \left(\big(\frac{K}{2}\big)^{1/3} \eta \right)}.\]

The last step of the method of matched asymptotic expansions is to derive a uniform expansion of the solution of \eqref{mod-eq} over the whole domain $[0,1)$. Combining the approximations of inner and outer solution, we conclude that the asymptotic expansion of $\bar{\Theta}$ as $\nu\to0$ is
\begin{equation}\label{asymd-theta}
\bar{\Theta}(x) \approx 
\left\{
\begin{aligned}
&-\sqrt{ 2 \, K \frac{x - x^2}{(1-x^2)^2}}, \quad && A\nu^{\frac{2}{3}}<x<1\\
&\Phi_0(\nu^{-\frac{2}{3}} \, x), \quad && 0\leq x \leq A \nu^{\frac{2}{3}}
\end{aligned}
\right.
\end{equation}
where $A$ is a positive constant and
\[\Phi_0(\nu^{-\frac{2}{3}} \, x) = -  2 \nu^{{1}/{3}} \bigg(\frac{K}{2}\bigg)^{1/3} \frac{\sqrt{3} \text{Ai}'\left(\big(\frac{K}{2}\big)^{1/3} \nu^{-\frac{2}{3}} \, x \right)+\text{Bi}'\left(\big(\frac{K}{2}\big)^{1/3} \nu^{-\frac{2}{3}} \, x \right)}{\sqrt{3} \text{Ai}\left(\big(\frac{K}{2}\big)^{1/3} \nu^{-\frac{2}{3}} \, x \right)+\text{Bi} \left(\big(\frac{K}{2}\big)^{1/3} \nu^{-\frac{2}{3}} \, x \right)}.\]
Moreover, the boundary layer is formed at $x=0$ and its size is of order $\nu^{\frac{2}{3}}$.

Let us consider now the equation \eqref{model-V} for $\bar{V}$. Motivated by the asymptotic expansion of $\bar{\Theta}$, we introduce the following simplified problem
\begin{subequations}
\label{model3}
\begin{align}
\label{mod-V}
& \nu \frac{d^2 \bar{V} }{d^2x}(x) = -\sqrt{2\,K\ \frac{x - x^2}{(1-x^2)^2}} \,\, \frac{d \bar{V}}{dx} (x), \\
\label{mod-bc2}
& \bar{V}(0)=0, \quad  \bar{V}(1)=V_\infty,
\end{align}
\end{subequations}
and apply the method of matched asymptotic expansions.  As before, we expect the boundary layer to be located at $x=0$.

First, we derive the approximation of the outer solution. Assuming that $\bar{V}$ can be expressed as a power series with powers of $\nu$, i.e.
\[\bar{V}(x) \approx \bar{V}_0(x) + \nu \bar{V}_1(x) + \mathcal{O}(\nu^2),\]
the equation \eqref{mod-V} yields the following differential equation for the lead term $\bar{V}_0$
\begin{equation}
-\sqrt{2\,K\,\frac{x - x^2}{(1-x^2)^2}} \,\, \frac{d \bar{V}_0}{dx} =0 \quad \textrm{which implies} \quad \bar{V}_0 (x) \equiv c,
\end{equation}
where $c$ is a constant. To determine this constant, we consider the boundary condition away from the boundary layer, i.e at $x=1$ and thus, \eqref{mod-bc2} implies
\begin{equation}
\bar{V}_0(x) \equiv V_\infty.
\end{equation}
For the inner solution we introduce the variable $\eta = \nu^{-2/3}\,x$ and set $\bar{V}(x) = Y(\eta)$. The problem \eqref{model3} reduces to
\begin{align}
\label{asy-Y}
\frac{d^2 Y}{d^2 \eta} + \sqrt{2\,K} \, \sqrt{\frac{\eta}{(1-\nu^{\frac{2}{3}} \,\eta) (1+\nu^{\frac{2}{3}} \, \eta)^2}} \,\, \frac{d Y}{d\eta} = 0 \quad \textrm{with} \quad Y(0) =0.
\end{align}
If the approximation of $Y$ in powers of $\nu$ is expressed as
\begin{equation*}
Y(\eta) \approx Y_0(\eta) + \nu Y_1(\eta) + \mathcal{O}(\nu^2),
\end{equation*}
and the Taylor expansion of the right-hand side of \eqref{asy-Y} is 
\[\sqrt{\frac{\eta}{(1-\nu^{\frac{2}{3}} \,\eta) (1+\nu^{\frac{2}{3}} \, \eta)^2}} \approx \sqrt{\eta} +  \mathcal{O}(\nu^\frac{2}{3}),\] 
then we have to solve the following problem for leading order term $Y_0$
\begin{align*}
\frac{d^2 Y_0}{d^2 \eta} + \sqrt{2\,K\,\eta}\, \frac{d Y_0}{d\eta} &= 0,\\
Y_0(0) &= 0.
\end{align*}
Its solution is 
\begin{align*}
Y_0(\eta) = C \int_0^\eta \mathlarger{\mathlarger{e^ {-\frac{2 \sqrt{2\,K}} {3} \, s^{3/2}}}} ds,
\end{align*}
where $C$ is an integration constant. Since both boundary conditions \eqref{mod-bc2} been have taken into consideration, the unknown $C$ is determined by  matching the outer and the inner expansions. Considering that both the inner and outer solutions approximate the same function in different regions, we impose that the two solutions are equal in a transition area close to the boundary layer, \cite{Holmes2003}. Therefore, we require
\begin{equation*}
\lim_{x\to 0} \bar{V}_0(x) = \lim_{\eta \to \infty} Y_0(\eta),
\end{equation*}
that implies
\begin{align*}
C = \mathop{ \frac {V_\infty} {\int_0^\infty \mathlarger{\mathlarger{e^ {-\frac{2 \sqrt{2\,K}} {3} \, s^{3/2}}}} ds}},
\end{align*}
and completes the derivation of the inner solution. 

As the last step of the method of matched asymptotic expansions, we derive a uniform expansion of the solution of \eqref{model3} over the whole domain $[0,1)$. Therefore, as $\nu\to0$ the asymptotic expansion of $\bar{V}(x)$ takes the form
\begin{align}
\label{V-asympt}
\bar{V}(x) \approx {V_\infty} \,\, \mathlarger{\frac{\bigints_0^{\nu^{-2/3}\,x}  e^ {-\frac{2 \sqrt{2\,K}} {3} \, s^{3/2}} ds} {\bigints_0^\infty e^ {-\frac{2 \sqrt{2\,K}} {3} \, s^{3/2}} ds}} + \mathcal{O}(\nu).
\end{align}
and the boundary layer at $x=0$ is of the same size as for $\bar{\Theta}(x)$.

\section{Stationary Navier-Stokes - Numerical Results}
\label{sec:numerics}
In this section we construct a numerical scheme to solve the the coupled system of ordinary differential equations \eqref{eq-ns-v1} using an iterative algorithm. Moreover, we illustrate some numerical experiments for different values of the parameters $\nu, \ E_0$ and $V_\infty$.

Let us recall the system \eqref{eq-ns-v1}. For convenience we use the equivalent formulation of the problem, system \eqref{eq-ns-v3-1}-\eqref{bd3_new}  in variable $x$, where $0<x<1$
\begin{subequations}
\begin{align}
\label{eq-ns-v3-Th}
\frac{\Theta^2 (x)}{2} - \nu \, \frac{d \Theta}{dx}  (x)  &=  \mathcal{F}(V,x),\\
\label{eq-ns-v3-V}
\nu \frac{d^2 V}{d^2x}(x) &= \Theta(x) \, \frac{d V}{dx}  (x),
\end{align}
\end{subequations}
equipped with the boundary conditions
\begin{subequations}
\begin{align}
\label{eq-bc-v3-ThV}
\Theta = V = 0 , \,\, &\textrm{ at } \quad x = 0, \\
\label{eq-bc-v3-V}
V \to V_\infty , \,\, &\textrm{ as} \quad x \to 1,
\end{align}
\end{subequations}
where the functional $\mathcal{F}\left(V,x\right)$ is defined by
\begin{align}\label{defFE}
\mathcal{F}\left(x\right) = \mathcal{F}\left(V,x; E_0 \right)  & = \frac{x}{(1-x^2)^2}\Bigg[\int_x^1 \frac{1-t^2}{t^2} \bigg(\displaystyle \int_0^t \frac{\sigma}{(1-\sigma^2)^2} V^2(\sigma) d\sigma \bigg) \, dt + \, {E_0} \,\, (1 - x)\Bigg].
\end{align}

\subsection{Discretization}
To obtain numerical approximations to system \eqref{eq-ns-v3-Th}-\eqref{eq-bc-v3-V} we use an iterative algorithm: Given $V$ we solve numerically the initial value problem \eqref{eq-ns-v3-Th} by a Runge-Kutta method and update $\Theta$ which is then used to solve  the two-point boundary value problem \eqref{eq-ns-v3-V} using finite differences. This process is repeated until convergence. The algorithm is described in detail in Section \ref{sec-impl}.

The discretization  of \eqref{eq-ns-v3-V} is based on finite differences. On $[0,1]$  we introduce a uniform mesh of fixed width $\Delta x$.  Given $N\in\mathbb{N}$, set 
$\Delta x = \frac{1}{N}$ and define the discrete points $x_j = j\Delta x$, $j= 0,1,2,\dotsc, N$. 
We denote by $\Theta^j \approx \Theta(x_j)$ and $V^j \approx V(x_j)$
and use central finite differences to  discretize  \eqref{eq-ns-v3-V}  and obtain
\begin{equation}
\label{FM1}
\begin{aligned}
\nu \,\frac{V^{j+1} - 2V^j + V^{j-1}} {\Delta x^2} & = \Theta^j \, \frac{V^{j+1} - V^{j-1}} {2\Delta x}, \quad j=1,\dotsc, N-1, \\
V^0 = 0, & \ V^N = V_{\infty}, 
\end{aligned}
\end{equation}
or equivalently
\begin{equation}
\begin{aligned}
\label{FM2}
V^{j-1} \bigg(2\nu + {\Delta x} \,\Theta^j \bigg)  - 4 \nu\, V^j  & + V^{j+1} \bigg(2 \nu - {\Delta x} \, \Theta^j \bigg)  = 0 , \qquad j=1,\dotsc, N-1, \\
V^0 = 0, & \ V^N = V_{\infty}, 
\end{aligned}
\end{equation}
which forms a tri-diagonal linear system solved by a direct method. 

To discretize the nonlinear initial value problem \eqref{eq-ns-v3-Th} we use a Runge-Kutta method. The Runge-Kutta methods are multistage methods that compute approximations to the solution at intermediate points which are later combined to advance the solution at the next discretization point.
Equation \eqref{eq-ns-v3-Th} can be rewritten as
\begin{equation}
\nu \frac{d \Theta}{dx} (x)  = \frac{\Theta^2 (x)}{2} - \mathcal{F}(x).
\end{equation}
For simplification of the presentation, we denote the right hand side of the above relation as follows
$f(x, V, \Theta) = \tfrac{1}{2} \Theta^2 - \mathcal{F}(x)$.
The Runge-Kutta method for \eqref{eq-ns-v3-Th} is obtained by discretizing the derivative using the following method:
\begin{enumerate}
	\item Set $\Theta^{j,1} =\Theta^j$, where $\Theta^j$ is the approximate solution at grid point $x_j$.
	\item Compute the $\mathcal{K}$ intermediate stages for $m =1,\dots,\mathcal{K}$ 
		\begin{equation}
		\label{RK1}	
		\nu \Theta^{j,m} = \nu \Theta^j +  {\Delta x} \sum_{s=1}^{\mathcal{K}} \alpha_{m,s}  f \left(x_j+c_s {\Delta x},V^{j,s},\Theta^{j,s}\right),
		\end{equation}
where $V^{j,s} \approx V(x_j+c_s \Delta x)$ and $\Theta^{j,s} \approx \Theta(x_j+c_s \Delta x)$. The coefficients $\alpha_{m,s}$ are constants with sum for every row equals to $c_m$, i.e.
\[ \sum_{s=1}^{\mathcal{K}} \alpha_{m,s} = c_m, \quad m =1,\dots,\mathcal{K}. \]
	\item Set 
\begin{equation}
\nu\Theta^{j+1} = \nu \Theta^j +  {\Delta x} \sum_{s=1}^{\mathcal{K}} \beta_s f \left(x_j+c_s {\Delta x},V^{j,s},\Theta^{j,s}\right)
\end{equation} 
the solution at the next point. Here the coefficients $\beta_s$ are constants with total sum equal to 1 and values of $V^{j,s}$ are computed using interpolation.
\end{enumerate}
The coefficients $\alpha_s, \beta_s$ and $c_s$ of the associated with the Runge-Kutta method are usually presented in a matrix form referred as Butcher tableau. That is,
\[
\renewcommand\arraystretch{1.2}
\begin{array}
{c|cccc}
c_1 & \alpha_{1,1} &\alpha_{1,2} &\cdots &\alpha_{1,\mathcal{K}}\\
c_2 & \alpha_{2,1} &\alpha_{2,2} &\cdots  &\alpha_{2,\mathcal{K}} \\
\vdots & \vdots & \vdots & & \vdots\\
c_\mathcal{K} & \alpha_{\mathcal{K},1}& \alpha_{\mathcal{K},2} &\cdots &\alpha_{\mathcal{K},\mathcal{K}}\\
\hline
& \beta_1 &\beta_2 &\cdots &\beta_\mathcal{K}
\end{array}
\] 
In our numerical experiments we use the well known Runge-Kutta-Fehlberg (RKF4(5)) method which allows adaptive stepsize control. For small values of $\nu$ the  solution $\Theta$ of the  i.v.p \eqref{eq-ns-v3-Th} might blow up, see the discussion in Section 5.3. Furthermore, as $x\to 1$ we expect the  system  to be singular since the point $x=1$ corresponds to the vortex line. In that respect, choosing the stepsize adaptively allows us to capture the correct behaviour of the solution close to the singularity.

\subsection{Implementation Details}\label{sec-impl}
To compute the numerical approximation of the solution for the coupled system \eqref{eq-ns-v3-Th} - \eqref{eq-ns-v3-V}, we construct an iterative algorithm and each equation is solved separately  using information from the previous iterations. In particular, we follow the following algorithm:
\begin{enumerate}
\item Set $V_0^j \equiv V_\infty$, for all $j$
\item For every iterative step $i=1,\dotsc$:
\begin{enumerate}[label=\roman*.]
\item Given $V_{i-1}$,  compute $\Theta_i^j$, $\forall \, j$, using the RKF-method in $[0,1]$
\item Given $\Theta_i$, compute $V_i^j$, $\forall \, j$, by solving the linear system \eqref{FM2} 
\item Solution $(\Theta,V)$ is obtained when the error between two consecutive iterations is small, i.e.
\begin{equation*}
\bigg\{\Delta x \sum_j |V_{i+1}^j - V_i^j | ^2 \bigg\}^\frac{1}{2} < \epsilon \quad \textrm{and}\quad \bigg\{\Delta x \sum_j |\Theta_{i+1}^j - \Theta_i^j | ^2 \bigg\}^\frac{1}{2} < \epsilon
\end{equation*}
for some $\epsilon>0$ small.
\end{enumerate}
\end{enumerate}
Note that for the approximation of the integrals in $\mathcal{F}$ we use the composite Simpson's rule.


\subsection{Numerical Tests}
In this section we exhibit numerical approximation of the solution for  \eqref{eq-ns-v3-Th} - \eqref{eq-ns-v3-V}. We illustrate the results for a different combinations of parameters $\nu$ and $E_0$ while the parameter $V_\infty$ is fixed. For the following examples we take $V_\infty=1$.

Consider first the case for $\nu = 0.2$ and $E_0 = -0.5$. Then $\Theta(x)$ is positive, and the flow is directed inward near the plane $z = 0$ and upward near the vortex line, see Figure \ref{fig:Th-sol2}. 
\begin{figure}[htbp] 
\centering
\includegraphics[scale=0.5]{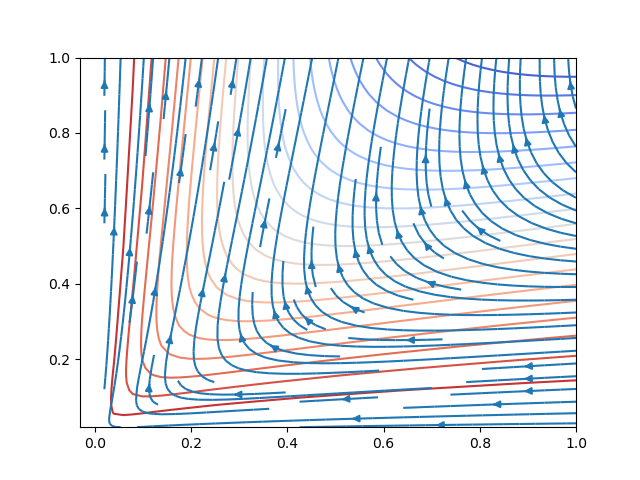}
\caption{(u,w) vector field for $\nu = 0.2, E_0 = -0.5$} \label{fig:Th-sol2}
\end{figure}

The converse behavior, i.e. $\Theta$ is negative, occurs for  $\nu = 0.05$ and $E_0 = 0.15$. The flow now has the reverse direction, it is directed outward near the plane $z = 0$ and downward near the vortex line, see Figure \ref{fig:Th-sol1}.
\begin{figure}[htbp] 
\centering
\includegraphics[scale=0.5]{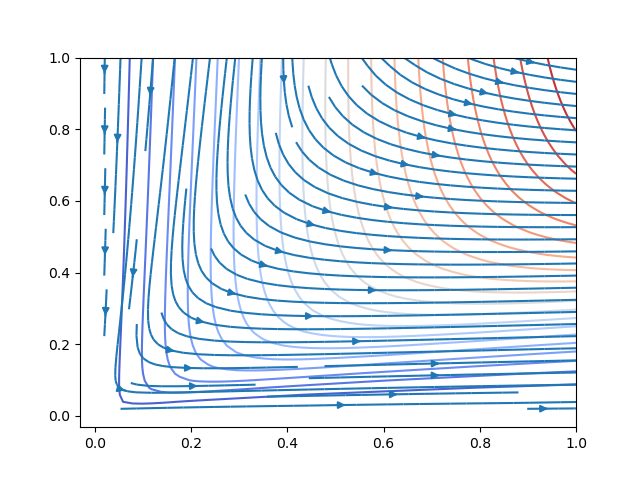}
\caption{(u,w) vector field $\nu = 0.05, E_0 = 0.15$} \label{fig:Th-sol1}
\end{figure}
For the parameters $\nu = 0.1$ and $E_0 = -0.2$, the stream function $\Theta$ is first positive and then becomes negative; the flow is directed inward near the plane $z = 0$ and downward near the vortex line, see Figure \ref{fig:Th-sol3}.
\begin{figure}[htbp] 
\centering
\includegraphics[scale=0.5]{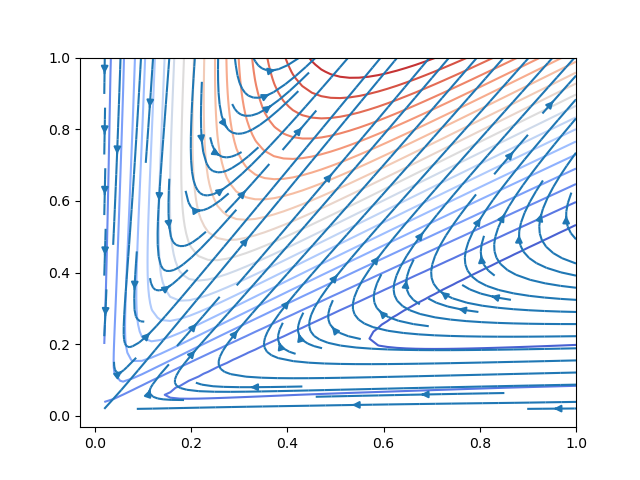}
\caption{(u,w) vector field $\nu = 0.1, E_0 = -0.2$} \label{fig:Th-sol3}
\end{figure}
The solid lines in Figures \ref{fig:Th-sol1}, \ref{fig:Th-sol2} and \ref{fig:Th-sol3}  are the contour lines of the streamfunction $\Theta$.
\subsection{Bifurcation diagram}
\label{sec:bifurcation}
We present now a $E_0 - \nu$  bifurcation diagram for \eqref{eq-ns-v3}. This diagram is computed using the methodology detailed in Section \ref{sec-impl}. To take into account the effect of the parameter $V_\infty$ we consider the following scaled variables:
$$
\phi = \frac{V}{V_\infty} \qquad \vartheta = \frac{\Theta}{V_\infty}, \qquad \mu = \frac{\nu}{V_\infty}, \qquad p_0 = \frac{E_0}{V_\infty^2}
$$
Then system \eqref{eq-ns-v3-Th}-\eqref{eq-bc-v3-V} becomes 
\begin{equation}
\label{scaledsys}
\begin{aligned}
\mu \frac{d\vartheta}{dx} & = \frac12\vartheta^2 - \mathcal{F}(x, \phi, p_0), \\
\mu \frac{d^2 \phi}{dx^2} & = \vartheta\frac{d\phi}{dx}, \\
\phi = \vartheta = 0 &  \ \text{ at } \ x=0,   \\
\phi \to 1, & \ \text{ as } \ x\to 1 ,
\end{aligned}
\end{equation}
where $\mathcal{F}$ is given by \eqref{defFE}.
The new scaled system exhibits the same behaviour as the original one, so we proceed in identifying numerically the four different zones, see Section \ref{sec:existence}:
\begin{itemize}
\item \emph{Zone A:}  $\vartheta$ is negative 
\item \emph{Zone B:}  $\vartheta$ starts positive and then changes sign 
\item \emph{Zone C:}  $\vartheta$ is positive
\item \emph{No Solution:} $\vartheta$ is positive and for small $\mu$ blows up
\end{itemize} 
We take $V_\infty=1$ and we consider values of $(E_0,  \nu) \leftrightarrow (p_0, \ \mu)$ in $\mathcal{B}= [-2, 2]\times[4\cdot10^{-5}, 0.6]$.  The set $\mathcal{B}$ is covered initially by $256=16\times 16$ patches $\mathcal{B}_{k,\ell}$ each of size $\delta p_0\times \delta\mu$. Along the horizontal axis, we take a uniform size $\delta p_0 = 0.25$, while $\delta\mu$ is non-uniform, with a finer grid around $\mu=4\cdot10^{-5}$ and gradually increasing towards $\mu=0.6$, with an average $\delta\mu \sim 3.75\cdot 10^{-2}$.  In each $\mathcal{B}_{k,\ell}$  we consider further a $51\times 51$ uniform grid of values $\{{p_0}_{i,j}^{k,\ell}, \mu_{i,j}^{k,\ell}\}, \ i,j=1,\dotsc,51$.  For all patches and for all values in the patch we identify in which zone the  solution belongs to, by solving numerically the scaled system \eqref{scaledsys}. The total computational cost of such process is rather small since the work in each patch can be  computed independently. 

The results of this process are depicted in Figure \ref{bfd1}. On the left graph, the four zones are clearly marked by their boundaries. To distinguish further each zone we ``separate" them by shifting slightly each zone along the horizontal axis and  Zone C along the vertical axis. On the right graph, the full structure of each zone is now revealed. The boundaries of all  zones meet at the point $(p_0, \mu) = (E_0, \nu) = (-\frac12, 4\cdot10^{-5})=(-\beta,4\cdot10^{-5})$, where $\beta = \frac{dH}{dx}(1)$ is the constant appearing in \eqref{defalpha}.  For any $E_0 < -\beta$ on the line $\mu=\nu= 4\cdot10^{-5}$ the solution ceases to exist. For larger values of $\nu$, where the solution does exist, the line $E_0=-\beta$ defines the border between Zone C and Zone B  as predicted theoretically, see Section \ref{sec:existence}. 

A similar diagram appears in Serrin \cite{Serrin}. It  uses a slightly different (but equivalent) formulation of the problem, but it is derived through a completely different process.  Our bifurcation diagram in Figure \ref{bfd1} is based on computation  of the numerical solution of \eqref{eq-ns-v3} as detailed in the previous paragraph. The diagram of Serrin is a consequence of a series of  theoretical estimates and bounds that provide necessary and/or sufficient conditions on the underlying parameters,  so that the solutions to the corresponding initial and boundary value problem belong to one of the desired zones; see \cite[pp 349-351]{Serrin}.
Qualitatively the two diagrams are quite similar,  in terms of the  overall structure and shape of the  borders between the various zones. However, quantitively there are some differences, due to the different sets of variables and constants used in the two approaches.  In our case solutions can exist when the pressure constant $E_0$ is negative, a region which is not addressed in Serrin's work, where solutions are obtained only for positive values of pressure. The value $P=1$ in \cite[Fig. 1]{Serrin}  is the point where all zones converge and beyond this point along the axis $\nu = 0$, solution ceases to exist. In the present work this value corresponds to $E_0=-\frac12$ and the whole diagram is mirrored along this line when compared to that in \cite[Fig. 1]{Serrin}.  In both diagrams the two vertical lines define the border between zones B and C. However, it's worth noticing that, apart from having different(opposite) orientations, the curved boundary that separates the \emph{NoSolution} zone from the other zones is very similar in both works.
\begin{figure}[htbp] 
\centering
\includegraphics[scale=0.5]{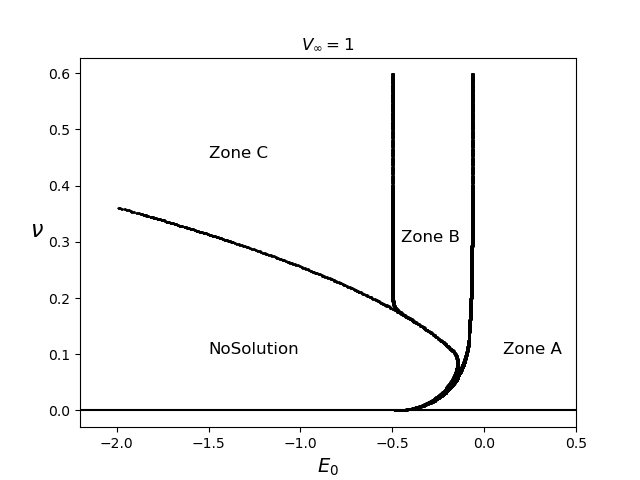}
\includegraphics[scale=0.5]{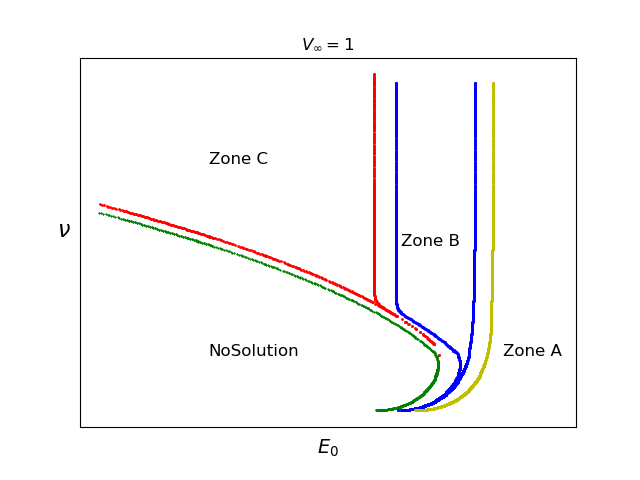}
\caption{$E_0 - \nu$ Bifurcation Diagram} 
\label{bfd1}
\end{figure}

\appendix

\section{Navier-Stokes equations in Cylindrical Coordinates}
\label{sec:cyl-coord}
Cylindrical coordinates $(r,\vartheta,z)$ are connected with rectangular coordinates through the transformation
$x_1 = r \,\cos\vartheta$, $x_2 = r\,\sin\vartheta$, $x_3 = z$,  see Fig. \ref{fig:cyl-coord-3d},  while the associated orthonormal system 
attached to $P$ is
\begin{equation*}
\vec{e}_r = (\cos\vartheta,\sin\vartheta,0), \,\  \vec{e}_\vartheta = (-\sin\vartheta,\cos\vartheta,0), \,\ \vec{e}_z = (0, 0, 1).
\end{equation*}
\begin{figure}[htbp] 
\centering
\tdplotsetmaincoords{70}{110} 
\begin{tikzpicture}[tdplot_main_coords] 
\def\ang{60} 
\def\r{4.5}
\def\z{3.0} 
\draw[->] (0,0,0) -- (6,0,0) node[below]{$x$}; 
\draw[->] (0,0,0) -- (0,6,0) node[below]{$y$}; 
\draw[->] (0,0,0) -- (0,0,3.5) node[left]{$z$}; 
\draw[dotted] (\r,0,\z) -- (\r,0,0) arc(0:90:\r) -- (0,\r,\z)[canvas is xy plane at z=\z] arc(90:0:\r); 
\draw (0,0,0) -- (xyz cylindrical cs:radius=\r,angle=\ang,z=\z) node[point](P){} node[above right]{$P$}; 
\draw[ ] (P) -- node[midway, right]{$z$} (\ang:\r) node[point]{} -- node[midway, below left]{$r$} (0,0) (0:1) arc(0:\ang:1) node[midway, below]{$\vartheta$}; 
\draw[-Latex] (P) -- ++(\ang:1.5) node[below]{$\vec{e}_r$}; 
\draw[-Latex] (P) -- ++(0,0,1) node[left]{$\vec{e}_z$}; 
\draw[-Latex] (P) -- ++({-1.5*sin(\ang)},{1.5*cos(\ang)},0) node[below right,yshift=5]{$\vec{e}_\vartheta$}; 
\end{tikzpicture}
\caption{Cylindrical Coordinate System  \label{fig:cyl-coord-3d}}
\end{figure}
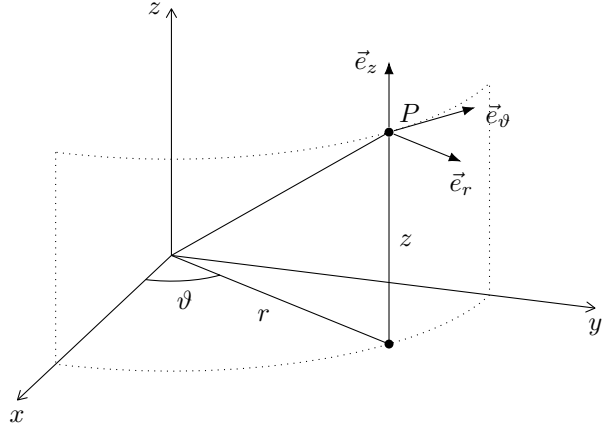
The velocity vector $\vec{u}$ is expressed in cylindrical coordinates as
\begin{equation*}
\vec{u} = u(r,\vartheta,z,t) \vec{e}_r + v(r,\vartheta,z,t) \vec{e}_{\vartheta} + w(r,\vartheta,z,t) \vec{e}_z,
\end{equation*}
while the vorticity $\vec{\omega}$ takes the form
\begin{equation*}
\vec{\omega} = \bigg(\frac{1}{r}\frac{\partial u}{\partial \vartheta} -\frac{\partial v}{\partial z}\bigg)  \vec{e}_r + \bigg( \frac{\partial u}{\partial z}-\frac{\partial w}{\partial r}\bigg)  \vec{e}_\vartheta + \bigg( \frac{1}{r}\frac{\partial}{\partial r} (rv)- \frac{1}{r}\frac{\partial w}{\partial \vartheta} \bigg)  \vec{e}_z.
\end{equation*}
A change of variables for the Navier - Stokes equations \eqref{NS} yields the following representation in cylindrical coordinates, see \cite[Table B.2]{BCAH87}, 
\begin{subequations}
\begin{align}
\label{tNS1-cyl}
\frac{\partial u}{\partial t} + (\vec{u} \cdot \nabla) u - \frac{v^2}{r} & = -\frac{\partial p}{\partial r} + \nu \Big[\Delta u - \frac{u}{r^2} -\frac{2}{r^2}\frac{\partial v}{\partial \vartheta} \Big], \\
\label{tNS2-cyl}
\frac{\partial v}{\partial t} + (\vec{u} \cdot \nabla) v + \frac{uv}{r} & = -\frac{1}{r}\frac{\partial p}{\partial \vartheta}+\nu \Big[\Delta v - \frac{v}{r^2} + \frac{2}{r^2}\frac{\partial u}{\partial \vartheta}  \Big], \\
\label{tNS3-cyl}
\frac{\partial w}{\partial t} + (\vec{u} \cdot \nabla) w \qquad  & =  - \frac{\partial p}{\partial z} + \nu \Delta w, \\
\label{tNS4-cyl}
\frac{1}{r} \frac{\partial }{\partial r} (ru) +\frac{1}{r}\frac{\partial v}{\partial \vartheta} + \frac{\partial w}{\partial z}  & = 0.
\end{align}
\end{subequations}
where the operators $\vec{u} \cdot \nabla$ and $\Delta$ are
\begin{equation*}
(\vec{u} \cdot \nabla) = u \frac{\partial }{\partial r} + \frac{1}{r} v \frac{\partial }{\partial \vartheta} + w \frac{\partial }{\partial z}  \qquad \text{and} \qquad
\Delta   = \frac{1}{r}\frac{\partial }{\partial r} \Big(r \frac{\partial }{\partial r}\Big)+\frac{1}{r^2} \frac{\partial^2  }{\partial \vartheta^2} + \frac{\partial^2  }{\partial z^2}
\end{equation*}

\section{Tornadoes}
\label{sec:tornado}
Tornadoes are considered among the most extreme and violent weather phenomena on Earth. They can occur under appropriate circumstances in all continents expect Antarctica, at various seasonal times and can be hazardous causing loss of human lives and extensive property damage. According to meteorologists, a tornado is defined as a rapidly rotating mass of air that extends downward from a cumuliform cloud, that is a cloud formed due to vertical motion of air parcels, to the ground. There exists several types of tornadoes, such as landspouts and waterspouts, but the majority of the most destructive tornadoes are known as supercells since their generation takes place within supercell thunderstorms. 

Although there is a high interest in forecasting such hazardous tornadoes, it remains a challenging task for researchers to predict when a supercell thunderstorm will lead to a tornado. It is observed that not all supercells are tornadic since a combination of atmospheric instability (caused by the storm) with a wind shear, i.e. a variation of wind speed and direction with altitude, is required for tornado formation. These two ingredients are important for tornado formation, however the process by which tornadoes are formed is still not fully understood and thus, difficult to predict. For a detailed presentation on the subject of tornadoes and tornado formation, we refer to \cite{MR10} and \cite{MR14}.

\subsection{Modeling Tornadoes}
Due to the complexity of tornadoes, the current knowledge about them comes mainly from laboratory experiments and numerical models of idealized supercell thunderstorms, \cite{Rot13}. In 1972, Ward \cite{Ward} conducted a pioneering laboratory experiment reproducing a tornado-like flow considering a fluid with constant density. The idea was to create a flow using a fan that passes through a hole of radius $r_0$ and is placed above a rotating plate in some distance $h$, under the assumption that the ratio of $h/r$ is small. Based on this work, several experimental and numerical simulations have taken place, referred to as Ward-type simulations, \cite{Rot13}. It was shown that the vortex form changes as the rotation increases, from single-celled (centerline updraft) to single-celled below to double-celled above, to double-celled (central downdraft surrounded by updraft) to multiple vortices, \cite{Rot13}. Also, this structural change is largely independent of the Reynolds number. 

Fiedler \cite{Fied95} proposed an idealization of a tornado-like flow that is defined on a closed domain and is for theoretical analysis. Here, the buoyancy force is taken into consideration. The behavior of such flows can be analyzed numerically using the axisymmetric, incompressible Navier-Stokes equations in cylindrical coordinates. Hence, the model takes the form
\begin{subequations}
\begin{align}
\frac{Du}{Dt} &= \frac{v^2}{r} + 2\Omega v + \frac{1}{Re} \Big[\frac{1}{r}\frac{\partial }{\partial r} \Big(r \frac{\partial u}{\partial r}\Big)+ \frac{\partial^2 u}{\partial z^2} -  \frac{u}{r^2}  \Big] - \frac{\partial p}{\partial r}, \\
\frac{Dv}{Dt}  & = - \frac{uv}{r} -  2\Omega u + \frac{1}{Re} \Big[\frac{1}{r}\frac{\partial }{\partial r} \Big(r \frac{\partial v}{\partial r}\Big)+ \frac{\partial^2 v}{\partial z^2} -  \frac{v}{r^2}  \Big], \\
\frac{Dw}{Dt}  & = b + \frac{1}{Re} \Big[\frac{1}{r}\frac{\partial }{\partial r} \Big(r \frac{\partial w}{\partial r}\Big)+ \frac{\partial^2 w}{\partial z^2} \Big] - \frac{\partial p}{\partial z}, \\
0&=\frac{1}{r} \frac{\partial }{\partial r} (ru) + \frac{\partial w}{\partial z}  ,
\end{align}
\end{subequations}
where $\frac{D}{Dt} = \frac{\partial }{\partial t} + u \frac{\partial }{\partial r}+ w\frac{\partial }{\partial z}$ stands for the material derivative, $b$ is the buoyancy and $\Omega$ is the non-dimensional swirl ratio which depends on both the angular momentum and the buoyancy. Numerical experiments of this model produce results analogous to Ward-type experiments for different values of $\Omega$ and $Re$, \cite{Rot13}.
 
In addition, various analytical models have been introduced to describe a tornado-like flow behavior. Assuming that a vortex line resembles the tornado core, we consider again the incompressible axisymmetric Euler and Navier - Stokes equations.

Let us review some widely used vortex models. A detailed presentation can be found in \cite{Gillmeier-p} and \cite{Kim17} and in references therein.

The Rankine vortex model is considered as the simplest one. Here the flow is assumed to be one-dimensional, steady, inviscid and all body forces are omitted. Hence, the model takes the form  
\begin{align*}
\frac{d p(r)}{d r}  =\rho \frac{v^2}{r},
\end{align*}
where $\rho$ is the density. Also, it is assumed that the velocity component is discontinuous and is written as
\begin{align*}
\bar{v}(\bar{r}) = 
\left\{
\begin{aligned}
\bar{r}	&\quad for \,\, \bar{r}<1,\\
\frac{1}{\bar{r}} &\quad \,\, for  \,\, \bar{r}>1.
\end{aligned}
\right.
\end{align*}
where $\bar{v} = \frac{v}{v_{max}}$ is the normalized velocity and $\bar{r}=\frac{r}{R}$ is the normalized distance for $R$ the radius of the core vortex. Sometimes, a modified version of velocity is used that is
\[\bar{v} = \frac{2\bar{r}}{(1+\bar{r}^2)}.\]
If the discontinuous velocity is considered, solving the differential equation yields the normalized pressure $\bar{p}(\bar{r}) = \frac{p(r)}{\rho v_{max}^2}$ that is
\begin{align*}
\bar{p}(\bar{r}) = 
\left\{
\begin{aligned}
\bar{p}(0) + \frac{1}{2} \bar{r}^2	&\quad for \,\, \bar{r}<1,\\
\bar{p}|_{r\to\infty} -\frac{1}{\bar{r}^2} &\quad \,\, for \,\, \bar{r}>1.
\end{aligned}
\right.
\end{align*}

Another vortex model is the Burgers-Rott, where the flow is assumed to be steady, with constant viscosity and zero body forces. Moreover, it is assumed that $u=u(r)$, $v=v(r)$, $w=w(z)$ and $p=p(r,z)$. The model then has the following form
\begin{subequations}
\label{s1}
\begin{align}
u \frac{\partial u}{\partial r} - \frac{v^2}{r} & = \mu \Big[\frac{1}{r}\frac{\partial }{\partial r} \Big(r \frac{\partial u}{\partial r}\Big) -  \frac{u}{r^2}  \Big] - \frac{1}{\rho}\frac{\partial p}{\partial r}, \\
u \frac{\partial v}{\partial r} + \frac{uv}{r} & = \mu \Big[\frac{1}{r}\frac{\partial }{\partial r} \Big(r \frac{\partial v}{\partial r}\Big) -  \frac{v}{r^2}  \Big], \\
w \frac{\partial w}{\partial z} \qquad  & = - \frac{1}{\rho}\frac{\partial p}{\partial z}, \\
\frac{1}{r} \frac{\partial }{\partial r} (ru) + \frac{\partial w}{\partial z}  & = 0,
\end{align}
\end{subequations}
where $\rho$ is the density and $\mu$ the dynamic viscosity. It is also assumed that
\begin{align*}
\bar{w}(\bar{z}) &= 2\,\alpha \,\bar{z}, \\
\bar{u}(\bar{r}) &= -\alpha \,\bar{r},
\end{align*}
where $\bar{z}=\frac{z}{R}$ is the normalized vertical height, $\bar{u}=\frac{u}{v_{max}}$ and $\bar{w}=\frac{w}{v_{max}}$ are the normalized velocities and $\alpha = \frac{2\mu}{v_{max}}$. Under these assumptions, solving the system \eqref{s1} implies the following
\begin{align*}
\bar{v}(\bar{r}) &= \frac{1}{\bar{r}} (1-exp(-\bar{r}^2)), \quad \textrm{and} \\
\bar{p}(\bar{r}, \bar{z}) &=  \bar{p}(0,0) + \int_0^{\bar{r}} \frac{\bar{v}^2(s)}{s} ds - \frac{\bar{\alpha}}{2} (\bar{r}^2 + 4 \bar{z}^2)
\end{align*}

The Sullivan vortex model has also been used widely to model tornado-like flows. As in the case of the Burgers-Rott model, we consider a flow that is stationary, with constant viscosity and zero body forces. In addition, it is considered that velocity components are given in the form $u=u(r)$, $v=v(r)$, $w=w(r,z)$, while pressure is of the form $p=p(r,z)$. One may conclude to the following
\begin{align*}
\bar{u}(\bar{r}) &= -\bar{\alpha}\bar{r} +\frac{2b\bar{v}}{\bar{r}} (1-e^{-\bar{r}^2}),\\
\bar{v}(\bar{r}) &= \frac{1}{\bar{r}} \frac{H(x)}{H(\infty)}, \\
\bar{w}(\bar{r},\bar{z}) &= 2 \bar{\alpha}\bar{z} (1-b\,e^{-\bar{r}^2}), 
\end{align*}
where $H(x) = \int_0^{x} e^{-s + 3 \int_0^s \frac{1}{\sigma} (1-e^{-\sigma^2}) d\sigma} ds$ for $x=\bar{r}^2$.
It is worth mentioning that although the Sullivan and the Burgers-Rott models have some similarities, the Sullivan model allows the generation of a double-celled vortex while Burgers-Rott model does not. 

\subsection{Mathematical Approach}
A  theoretically sound approach towards study of tornadoes was introduced by Long  \cite{Long58,Long61}. Assuming the tornado core is modeled by a semi-infinite vortex line in a fluid interacting with a plane boundary surface, he presented the reduction of incompressible Navier-Stokes equations 
to a system of differential equations motivated by boundary layer theory. 
Several subsequent studies \cite{Hall72, BF77,SH1999,Shtern12} took a similar direction and studied the formation of a boundary layer considering the near-axis boundary layer approximation to the incompressible axisymmetric Navier-Stokes equations. This is usually referred as quasi-cylindrical approximation and leads to the system
\begin{subequations}
\begin{align*}
\frac{v^2}{r} & = \frac{\partial p}{\partial r}, \\
u \frac{\partial v}{\partial r} + w \frac{\partial v}{\partial z} + \frac{uv}{r} & = \nu \Big[\frac{1}{r}\frac{\partial }{\partial r} \Big(r \frac{\partial v}{\partial r}\Big) -  \frac{v}{r^2}  \Big], \\
u \frac{\partial w}{\partial r} + w \frac{\partial w}{\partial z} \qquad  & = \nu \Big[\frac{1}{r}\frac{\partial }{\partial r} \Big(r \frac{\partial w}{\partial r}\Big) \qquad \Big] - \frac{\partial p}{\partial z}, \\
\frac{1}{r} \frac{\partial }{\partial r} (ru) + \frac{\partial w}{\partial z}  & = 0,
\end{align*}
\end{subequations}
The boundary layer is associated in the literature with the vortex breakdown, that is the change of direction of the flow near the boundary as $\nu\to0$.

Independently, Goldshtik (1960) showed that a similar reduction of incompressible axisymmetric Navier-Stokes equations to a system of differential equations leads to a 'paradoxical' exact solution that vanishes for some values of Reynolds number, \cite{Gold60}. Serrin (1972) broadened this class of solutions and described the existence of three different solution profiles depending on an arbitrary parameter and the kinematic viscosity, \cite{Serrin}. Following this work, several authors have extended the study to the generalized case of conical flows, \cite{SH1999}, \cite{FFA00}, \cite{Shtern12}. The ideas of Long and Goldshtik have been applied to investigate the formation of a boundary layer and the loss of existence of such solutions using different boundary conditions or a modified self-similar ansatz, \cite{Hall72,BF77,Gold90,GS89,GS90}. This line of research is systematized in the present manuscript by studying stationary solutions of the axisymmetric Navier-Stokes equations.

The above References concern the interaction of a vortex with a boundary when the flow is assumed stationary.
Models have been devised concerning the motion of the vortex core structure and its coupling with environmental flows,
 and the reader is referred to \cite{PMOK12,LMS16} and references therein regarding that subject.
 
 \bigskip
 \noindent
{\bf Acknowledgment.} 
Research partially supported by King Abdullah University of Science and Technology (KAUST) baseline funds.

\medskip
\noindent
{\bf Conflicts of interest.} 
The authors have no conflicts of interest to report.

%


\end{document}